\tikzset{
 mid arrow/.style={postaction={decorate,decoration={
        markings,
        mark=at position .57 with {\arrow[scale=2]{stealth}}
      }}},
}
\newtheorem{theorem}{Theorem}[section]
\newtheorem*{theorem*}{Theorem}
\newtheorem*{lem*}{Lemma}
\newcolumntype{M}[1]{>{\centering\arraybackslash}m{#1}}
\newcommand{\fs}[3]{\xymatrix{ {#1} \ar@{-}@/_{1pc}/[r]_{{#2}}& {#3}}}
\newcommand{\fss}[5]{\xymatrix{ {#1} \ar@{-}@/_{0.5pc}/[r]_{{#2}}& {#3} \ar@{-}@/_{0.5pc}/[r]_{{#4}}& {#5}}}
\newcommand{\fsss}[7]{\xymatrix{ {#1} \ar@{-}@/_{0.5pc}/[r]_{{#2}}& {#3} \ar@{-}@/_{0.5pc}/[r]_{{#4}}& {#5} \ar@{-}@/_{0.5pc}/[r]_{{#6}}& {#7}}}
\newcommand{\fssss}[9]{\xymatrix@C=5mm{ {#1} \ar@{-}@/_{1pc}/[r]_{{#2}}& {#3} \ar@{-}@/_{1pc}/[r]_{{#4}}& {#5} \ar@{-}@/_{1pc}/[r]_{{#6}}& {#7} \ar@{-}@/_{1pc}/[r]_{{#8}}& {#9}}}
\newcommand{\sfs}[3]{\raisebox{1.5ex}{\xymatrix@C=3mm{{#1} \ar@{-}@/_{0.9pc}/[r]_{{#2}} & {#3}}}}
\newcommand{\sfsss}[7]{\xymatrix@C=3mm{ {#1} \ar@{-}@/_{0.9pc}/[r]_{{#2}}& {#3} \ar@{-}@/_{0.9pc}/[r]_{{#4}}& {#5} \ar@{-}@/_{0.9pc}/[r]_{{#6}}& {#7}}}
\newcommand{\sfssss}[9]{\xymatrix@C=3mm{ {#1} \ar@{-}@/_{0.9pc}/[r]_{{#2}}& {#3} \ar@{-}@/_{0.9pc}/[r]_{{#4}}& {#5} \ar@{-}@/_{0.9pc}/[r]_{{#6}}& {#7} \ar@{-}@/_{0.9pc}/[r]_{{#8}}& {#9}}}
\DeclareRobustCommand*{\mfaktor}[3][]
{
   { \mathpalette{\mfaktor@impl@}{{#1}{#2}{#3}} }
}
\newcommand*{\mfaktor@impl@}[2]{\mfaktor@impl#1#2}
\newcommand*{\mfaktor@impl}[4]{
   \settoheight{\faktor@zaehlerhoehe}{\ensuremath{#1#2{#3}}}%
   \settoheight{\faktor@nennerhoehe}{\ensuremath{#1#2{#4}}}%
      \raisebox{-0.35\faktor@zaehlerhoehe}{\ensuremath{#1#2{#3}}}%
      \mkern-4mu\diagdown\mkern-5mu%
      \raisebox{0.35\faktor@nennerhoehe}{\ensuremath{#1#2{#4}}}%
}
\DeclareRobustCommand*{\sfaktor}[3][]
{
   { \mathpalette{\sfaktor@impl@}{{#1}{#2}{#3}} }
}
\newcommand*{\sfaktor@impl@}[2]{\sfaktor@impl#1#2}
\newcommand*{\sfaktor@impl}[4]{
   \settoheight{\faktor@zaehlerhoehe}{\ensuremath{#1#2{#3}}}%
   \settoheight{\faktor@nennerhoehe}{\ensuremath{#1#2{#4}}}%
      \raisebox{0.35\faktor@zaehlerhoehe}{\ensuremath{#1#2{#3}}}%
      \mkern-1mu\diagup\mkern-4mu%
      \raisebox{-0.35\faktor@nennerhoehe}{\ensuremath{#1#2{#4}}}%
}
\newtheorem{cor}[theorem]{Corollary}
\newtheorem{pro}[theorem]{Proposition}
\newtheorem{lem}[theorem]{Lemma}
\newtheorem*{conj}{The $p$-adic Littlewood Conjecture}
\theoremstyle{definition}
\newtheorem{defn}[theorem]{Definition}
\theoremstyle{remark}
\newtheorem*{remark}{Remark}
\title{A Geometric Interpretation of the $p$-adic Littlewood Conjecture}
\date{September 25, 2018}
\author{John Blackman}
\let\phi\varphi
\let\epsilon\varepsilon
\begin{document}
\maketitle

\begin{abstract}
This paper investigates integer multiplication of continued fractions using geometric structures. In particular, this paper shows that integer multiplication of a continued fraction can be represented by replacing one  triangulation of an orbifold with another triangulation. This method is used to show that eventually periodic continued fractions have partial quotients which have exponential growth when iteratively multiplied by $n$, for $n$ any fixed, natural number. 
\end{abstract}

\tableofcontents

\section{Introduction}

The main aim of this paper is to find a geometric analogue to the integer multiplication of continued fractions. Our work is based upon the link between continued fractions and geodesics intersecting the Farey tesselation $\mathcal{F}$, which was noted by M. Humbert as early as 1916 \cite{MGH}. This connection was famously used by C. Series in \cite{MSCF}, who replaced the usual tessellation of $\mathbb{H}$ by fundamental domains of $SL_2(\mathbb{Z})$ with the Farey complex $\mathcal{F}$ "to clarify the somewhat elusive connection" between the modular surface and continued fractions.  
%
Our motivation for this paper stems from a reformulation of the $p$-adic Littlewood Conjecture (pLC), which roughly states that for a fixed prime $p$ and any real number $\alpha$, the partial quotients of the continued fraction expansion $\overline{p^n\alpha}$ become unbounded as $n$ tends to infinity. 
However, it is not immediately clear how the continued fraction expansion transforms as we multiply by $p$, and so
naturally the question arises: \par
\vspace{\baselineskip}

 \textit{"How can one multiply a continued fraction by a prime/natural number?"} \par
\vspace{\baselineskip}
 One could construct such a map between continued fractions by taking a continued fraction $\overline\alpha$, recovering the real number $\alpha$, multiplying by a natural number $n$ and then computing the continued fraction expansion of $n\alpha$. However, this algorithm is not very fit for our purposes: it provides little explicit information and when implemented results in errors for $\overline{\alpha}\in\mathbb{R}\setminus\mathbb{Q}$. This is because if $\alpha\in\mathbb{R}\setminus\mathbb{Q}$, then $\overline{\alpha}$ will be infinite and so we must first truncate $\overline{\alpha}$ to compute $\alpha$. This truncation leads to computational errors. 
 
Instead, we wish to create a discrete multiplication map $\overline{n}:\overline{\alpha}\mapsto\overline{n\alpha}$, in which we do not have to worry about truncation. Whilst our algorithm is not easily implementable by a computer, it does provide a lot of explicit information and therefore, is useful in theoretical sense. These discrete multiplication maps were shown to exist by J. Vandehey in \cite{NTM} and an explicit arithmetic construction for $p$ prime, was created by M. Northey in \cite{MMT}.
In this paper, we produce an algorithm to attain such a map, by identifying real numbers with geodesic rays in $\mathbb{H}$ and continued fractions with cutting sequences of these geodesic rays with the Farey complex $\mathcal{F}$.
In particular, we show that integer multiplication of a continued fraction by $n$, can be understood as a replacement of one triangulation of the orbifold $\mfaktor{\Gamma_0(n)}{\mathbb{H}}$ with another, which we describe in the following theorem.

\begin{itemize}
\item[] \textbf{Theorem \ref{orbi}.} For every continued fraction $\overline{\alpha}$ and any natural number $n$, there are two canonical triangulations $\faktor{T_{\{1,n\}}}{\sim}$ and $\faktor{T_{\{n,n\}}}{\sim}$, and a geodesic ray $\zeta$ on the orbifold $\mfaktor{\Gamma_0(n)}{\mathbb{H}}$, such that the cutting sequence of $\zeta$ with $\faktor{T_{\{1,n\}}}{\sim}$ corresponds to $\overline{\alpha}$ and the cutting sequence of $\zeta$ with $\faktor{T_{\{n,n\}}}{\sim}$ corresponds to $\overline{n\alpha}$. 
\end{itemize}

We then show that a path on a triangulated orbifold is homotopic to a closed curve if and only if its cutting sequence directly corresponds to an \textit{essentially periodic} continued fractions (see Definition \ref{esp}.(2.)).

\begin{itemize}
\item[] \textbf{Theorem \ref{cc}.} Let $\mathcal{O}$ be a \textit{quotient-triangulated} orbifold. Then any infinite path $\zeta$ on $\mathcal{O}$ is homotopic to a closed curve if and only if the corresponding cutting sequence is \textit{essentially periodic}.
\end{itemize}

By looking at pLC in this geometric setting, we are able to obtain some surprising results pertaining to continued fractions, whilst using relatively simple techniques. The main such results are as follows.

\begin{itemize}
\item[] \textbf{Theorem \ref{conden} and Corollary \ref{connum}.} Let $\overline{\alpha}$ be any strictly periodic continued fraction. Then for any natural number $n$, there are    infinitely many convergent denominators and infinitely many convergent numerators of $\overline{\alpha}$ which are divisible by $n$.

\item[] \textbf{Theorem \ref{evp}.} Let $\overline{\beta}$ be an eventually periodic continued fraction. Then for every natural number $n$ there exists natural numbers $a$ and $k$, and an \textit{essentially periodic} continued fraction $\overline{\alpha}$  such that $\overline{mn^k\beta}=ma+\overline{m\alpha}$, for  $m$ any natural number.
\end{itemize}

Theorem \ref{evp} then allows us to relate the growth of eventually periodic continued fractions to the growth of essentially periodic continued fractions. As a result, we get the following proposition.

\begin{itemize}
\item[] \textbf{Proposition \ref{evpplc}.} Let $\overline{\alpha}$ be an eventually periodic continued fraction. Then $\overline{\alpha}$ has partial quotients which grow exponentially. In particular, every eventually periodic continued fraction satisfies pLC.
\end{itemize}

\noindent
This paper is organised as follows.

Section 2 of this paper is aimed to introduce the premlinary constructions that we will use throughout the paper.  In Section \ref{plc}, we provide both the formal statement of both pLC and a reformulation of pLC. In Section 2.2, we recall the clasical link between continued fractions and cutting sequences of a geodesic ray $\zeta$ on $\mathbb{H}$ with the Farey complex $\mathcal{F}$, which was first introduced by M. Humbert in \cite{MGH}. In Section 2.3, we introduce some novel constructions to show how multiplication of a continued fraction can be viewed as taking the cutting sequence of a geodesic ray with respect to a scaled Farey complex.  We observe that $\Gamma_0(n)$ induces a common tesselation of the Farey complex and the $\frac{1}{n}$-scaled Farey complex $\frac{1}{n}\mathcal{F}$. We use this information to show, that if a continued fraction $\overline{\alpha}$ has a convergent denominator divisible by some natural number $n$, then $\overline{n\alpha}$ contains a partial quotient of size at least $n$ [Proposition \ref{pro2}]. Using the construction of fundamental domains of $\Gamma_0(n)$ introduced by R.S. Kulkarni \cite{AGM} (which we cover as background in $\ref{RSK}$), we describe the $\frac{1}{n}$-scaling of the Farey complex as a change in decoration for a fundamental domain of $\Gamma_0(n)$ and give a theoretical multiplication algorithm for every natural number $n$.

In Section 3, we investigate cutting sequences on the orbifolds pertaining to the quotient space $\mfaktor{\Phi}{\mathbb{H}}$ for $\Phi$ a finite subgroup of $PSL_2(\mathbb{Z})$. For all $\mfaktor{\Gamma_0(n)}{\mathbb{H}}$, we show how the discrete multiplication map is induced by a change in quotient triangulation of $\mfaktor{\Gamma_0(n)}{\mathbb{H}}$ [Theorem \ref{orbi}]. We then show that an infinite path on a triangulated orbifold is homotopic to a closed curve if and only if the corresponding cutting sequence is \textit{essentially periodic} (see definition \ref{esp}.(2.)) [Theorem \ref{cc}]. This is used to show that essentially periodic continued fractions are a closed class under multiplication by  rational numbers  [Corollary \ref{rcc}]. We also use Theorem \ref{cc} to show that for any natural number and any strictly periodic continued fraction, there are infinitely many convergent denominators (and convergent numerators) of the strictly periodic continued fraction which are divisible by this natural number [Theorem \ref{conden} and Corollary \ref{connum}]. We then show that the integer multiplication of an eventually periodic continued fraction is in some way determined by the natural multiplication of an essentially periodic continued fraction [Theorem \ref{evp}]. We also provide an alternative proof to the statement that eventually periodic continued fractions satisfy  pLC, which was first shown in \cite{PDs}. We improve on this result by showing that eventually periodic continued fractions have partial quotients which grow exponentially, when iteratively multiplied by $n$, for some integer $n$ [Proposition \ref{evpplc}].

\textit{Acknowledgements:} This research was funded by a Doctoral ESPRC grant, awarded by Durham University. I would like to especially thank my Ph.D. supervisor, Dr. Anna Felikson, for her continued support and encouragement throughout the project. I would also like to thank Matthew Northey for introducing me to pLC and to the problem of finding a discrete multiplication map, as well as for his helpful discussions surrounding this topic. Finally, I would like to thank Dr. Erez Nesharim for his discussions regarding the $t$-adic Littlewood Conjecture and providing me with more insight into this area.

\section{A Geometric Approach to Integer Multiplication of Continued Fractions}

The aim of this section is to introduce the main constructions that we will use in Section 3. Sections \ref{plc} and \ref{CFCS}  are mostly background, with Section \ref{plc} introducing both the pLC and reformulation of it as motivation of this paper, as well as some classical results in Diophantine approximation, and Section \ref{CFCS} introduces the notion of a cutting sequence and then recalls some classical results of C. Series in $\cite{GMN}$ and $\cite{MSCF}$. In Section \ref{TFC}, we explain how the continued fraction expansion of $n\alpha$, for a real number $\alpha$ and integer $n$, is equivalent to the cutting sequence of some geodesic ray $\zeta$ with the scaled Farey complex $\frac{1}{n}\mathcal{F}$. We then show how multiplication of a continued fraction by an integer $n$ can be represented by replacing one decoration of a fundamental domain of $\Gamma_0(n)$ with another.



\subsection{The $p$-adic Littlewood Conjecture}\label{plc}

The $p$-adic Littlewood conjecture (pLC) is a specific case \textit{mixed Littlewood conjecture}, which was first proposed by B. de Mathan and O. Teuli\'e   \cite{PDs} in 2004. The purpose of this conjecture was to gain insight into  
the \textit{Littlewood conjecture}, a problem in Diophantine approximation dating back to the 1930's. However, the $p$-adic Littlewood conjecture has proved very interesting in its own right, with significant progess having been made, but no conclusion. Notably, M. Einsiedler and D. Kleinbock  showed in 2005 \cite{EK}, that the set of counter-examples had Hausdorff dimension zero and D. Badhziahin, Y. Bugeaud, M. Einsiedler and D. Kleinbock showed in 2015 \cite{BBEK}, that all potential counterexamples must be non-recurrent (in fact, even stronger statements regarding the mixed Littlewood Conjecture were made). Progress has also been made regarding the $t$-adic Littlewood Conjecture, an analogue of pLC over function fields. In particular, the $t$-adic Littlewood conjecture has been shown to be false for $\mathbb{F}_3$ by F. Adiceam, E. Nesharim and F. Lunnon in \cite{tLC}, and the paper-folding sequence is given as an explicit counter-example. 

In order to explicitly state pLC, we first define the \textit{$p$-adic norm}  and the \textit{distance to the nearest integer function}. The $p$-adic norm is the function $|\cdot|_p$ given by  $|x|_p:=~p^{-\nu_p(x)}$, where $\nu_p(x):=\max\{n\in\mathbb{N}\cup\{0\}:p^n|x\}$  and the distance to the nearest integer is the function $\|\cdot\|$ given by $\|x\|:=\min\{|x-n|:n\in\mathbb{Z}\}$.

Then the statement of the $p$-adic Littlewood conjecture is as follows.

\begin{conj} For every ${\alpha\in\mathbb{R}}$  and $p$ prime, we have:

$$ \liminf\limits_{q\rightarrow\infty} q\cdot|q|_p\cdot\|q\alpha\|=0 $$
\end{conj}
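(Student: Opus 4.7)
The plan is to prove pLC via its equivalent continued-fraction reformulation: for every real $\alpha$ and every prime $p$, the partial quotients of the continued fraction expansion of $p^{k}\alpha$ become unbounded as $k\to\infty$. First I would derive this equivalence from standard Diophantine identities, using that $\|q\alpha\|$ is essentially minimised by convergent denominators and that $|q|_{p}=p^{-\nu_{p}(q)}$ isolates the $p$-adic part of $q$; the upshot is that a positive uniform lower bound on $q\cdot|q|_{p}\cdot\|q\alpha\|$ would translate into a uniform upper bound on the partial quotients of every $p^{k}\alpha$.

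Then I would bring in the geometric machinery of Theorem \ref{orbi}. Each multiplication by $p$ replaces the triangulation $\faktor{T_{\{1,p\}}}{\sim}$ of the orbifold $\mfaktor{\Gamma_{0}(p)}{\mathbb{H}}$ with $\faktor{T_{\{p,p\}}}{\sim}$, both read off from a single geodesic ray $\zeta$ whose two cutting sequences are $\overline{\alpha}$ and $\overline{p\alpha}$. Iterating this $k$ times lifts $\zeta$ to the tower of orbifolds $\mfaktor{\Gamma_{0}(p^{k})}{\mathbb{H}}$, whose increasingly fine triangulations the ray must cross, and whose combinatorics encode the expansion of $p^{k}\alpha$.

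The central step uses Proposition \ref{pro2}: if some convergent denominator of $\overline{\alpha}$ is divisible by $n$, then $\overline{n\alpha}$ contains a partial quotient of size at least $n$. To force pLC, I would aim to show that for every $k$ there exists $j\geq 0$ such that some convergent denominator of $p^{j}\alpha$ is divisible by $p^{k}$, so that a further multiplication by $p^{k}$ produces a partial quotient of size at least $p^{k}$ somewhere in the expansion of $p^{j+k}\alpha$. In the orbifold picture this amounts to showing that $\zeta$ crosses arbitrarily deep edges in the $\Gamma_{0}(p^{k})$-triangulations as $k$ grows, i.e.\ that the lifted ray is not confined to any finite portion of the tower.

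The hard part is precisely this divisibility step for $\alpha$ whose continued fraction is not eventually periodic. For quadratic irrationals, Theorem \ref{conden} and Theorem \ref{evp} already deliver the required divisibility via closed-curve behaviour on the orbifold, culminating in Proposition \ref{evpplc}. For generic $\alpha$ no such periodic structure is available, and no combinatorial reason guarantees that $p^{k}$ divides some convergent denominator of $p^{j}\alpha$; the change-of-triangulation procedure tells us how the expansion transforms but not whether large powers of $p$ accumulate in the $q_{n}$'s. Overcoming this obstacle seems to require genuinely dynamical input, controlling how $\zeta$ distributes across the tower $\mfaktor{\Gamma_{0}(p^{k})}{\mathbb{H}}$ as $k\to\infty$, and this is precisely where the conjecture has resisted resolution to date.
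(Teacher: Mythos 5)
The statement you were asked to prove is the $p$-adic Littlewood Conjecture itself, which is an open problem; the paper does not prove it, it only states it and then establishes partial results (for eventually periodic $\overline{\alpha}$) in its favor. So there is no ``paper's own proof'' to compare against, and you were right not to manufacture one.

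Your outline is an accurate reading of how the paper's machinery bears on the problem. The reformulation of pLC as $\limsup_{i\to\infty} B(p^i\alpha)=\infty$ is exactly the paper's Proposition in Section~\ref{plc}, and your use of Theorem~\ref{orbi} (change of triangulation on $\mfaktor{\Gamma_0(n)}{\mathbb{H}}$) and Proposition~\ref{pro2} (divisibility of a convergent denominator by $n$ forces $B(n\alpha)\geq n$, with the side condition $n<q_k$ that you should keep in mind) is the right way to assemble them. Your proposed sufficient criterion --- for every $k$ find $j$ with $p^k$ dividing some convergent denominator $q_m$ of $p^j\alpha$ --- would indeed yield $B(p^{j+k}\alpha)\geq p^k$ and hence pLC. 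You also correctly identify where this breaks down: the paper supplies the needed divisibility only for eventually/essentially periodic continued fractions, via the closed-curve criterion of Theorem~\ref{cc} and its consequences (Theorem~\ref{conden}, Theorem~\ref{evp}, Proposition~\ref{evpplc}); for general $\alpha$ no such recurrence is available, and establishing it for the geodesic ray $\zeta$ across the tower $\mfaktor{\Gamma_0(p^k)}{\mathbb{H}}$ is exactly the unresolved dynamical core of the conjecture. In short: your proposal is not a proof, but you explicitly say so, and your diagnosis of the obstruction is correct and consistent with the paper.
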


\noindent
In other words, for every $\alpha\in\mathbb{R}$ and $p$ prime, we can find an infinite subsequence $\{q_k\}_{k\in\mathbb{N}}$ such that:
$$\lim\limits_{k\rightarrow\infty} q_k\cdot|q_k|_p\cdot\|q_k\alpha\|=0$$

In order to remove trivial solutions to pLC, we define the $\textit{badly approximables}$ as the set of real numbers $\textit{Bad}:=\{\alpha\in\mathbb{R}:\liminf_{q\rightarrow{\infty}} q\cdot\|q\alpha\|>0\}$. It follows from the fact $|x|_p\leq{1}$ for any $x\in\mathbb{R}$ and the definition of $Bad$, that if $\alpha\not\in{Bad}$, then $\alpha$ satisfies pLC. It follows from this, that if $\alpha$ is a counter example to pLC, then necessarily $\alpha\in\textit{Bad}$. A useful question which arises from this notion of \textit{Bad} is:\par\vspace{\baselineskip}

 "For $\alpha\in\textit{Bad}$, how can one find a subsequence $\{q_k\}_{k\in\mathbb{N}}$ which minimises the $\|q_k\alpha\|$ term?"\par\vspace{\baselineskip}

This question has been very well studied from the point of view of the Littlewood conjecture and Diophantine approximation. In particular, it is well known that the \textit{convergents}  of the \textit{continued fraction expansion} of $\alpha$ gives the best rational approximation of $\alpha$, and thus the \textit{convergent denominators} of the \textit{continued fraction expansion} of $\alpha$ minimise the term $\|q_k\alpha\|$. We define continued fractions and convergent denominators below.

\subsubsection{Continued Fractions and Convergents}
\begin{defn}[Section 10, G.H. Hardy and E.M. Wright \cite{HW}]
A \textit{continued fraction} $\overline{\alpha}$ is an expression of the form 

$$\overline{\alpha}:=a_0+\cfrac{1}{a_1+\cfrac{1}{^{\ddots}+\cfrac{1}{a_r}}}$$

\noindent
where, $a_0\in\mathbb{Z}$ and $a_i\in\mathbb{N}$ for $i\geq1$.
\end{defn}

We will usually write continued fractions as a sequence of $a_i$'s, $\overline{\alpha}=[a_0;a_1,\ldots,a_r]$ and refer to the $a_i$'s as \textit{partial quotients}. The sequence of partial quotients can be either finite or infinite and refer to the corresponding continued fraction as finite or infinite accordingly. Evaluating the continued fraction expression gives a real number $\alpha$, and for any real number $\alpha$ we can find an associated continued fraction expansion. For any $\alpha\in\mathbb{R}$, the continued fraction $\overline{\alpha}$ is finite if and only if $\alpha\in\mathbb{Q}$. For $\alpha\in\mathbb{Q}$, there are two different continued fraction expansions, $[a_0;a_1,\ldots,a_r]$ and $[a_0;a_1,\ldots,a_r-1,1]$, where $a_r>1$. For $\alpha\in\mathbb{R}\setminus\mathbb{Q}$, there is a unique infinite continued fraction expansion.

\begin{defn}[Section 10.2, G.H. Hardy and E.M. Wright \cite{HW}] Let $\overline\alpha=[a_0;a_1,a_2,\ldots]$ be a continued fraction. We define the \textit{$k$-th convergent} of $\overline{\alpha}$ to be $\frac{p_k}{q_k}:=[a_0;a_1,\ldots,a_k]$. 
We can define this iteratively where:
\begin{align*} p_{-1} &= 1   &p_0 &= a_0  &p_k &= a_kp_{k-1}+p_{k-2} \\
q_{-1} &= 0  &q_0 &= 1  &q_k &= a_kq_{k-1}+q_{k-2} 
\end{align*}
 
We refer to the term $p_k$ as the \textit{$k$-th convergent numerator} of $\alpha$ and $q_k$ as the \textit{$k$-th convergent denominator}.
\end{defn}

\subsubsection{Reformulating pLC in terms of Continued Fractions}

Using the notions of continued fractions and convergents of these continued fractions, we can rephrase pLC as a condition on continued fraction expansions. We define the height of $\alpha$, $B(\alpha)$ to be the largest partial quotient in the continued fraction expansion of $\alpha$ (excluding the first partial quotient). In other words,
 $$B(\alpha):=sup\{a_i:\overline{\alpha}=[a_0;a_1,\ldots], i\in\mathbb{N}\}$$
  We then define $\textit{Bad}_{CF}$ to be the set of real numbers with bounded partial quotients i.e.  $\textit{Bad}_{CF}:=\{\alpha\in\mathbb{R}:B(\alpha)<\infty\}$. Using these definitions, we get the following classical lemma.

\begin{lem}
For any $\alpha\in\mathbb{R}$, $\alpha\in\textit{Bad}$ if and only if $\alpha\in\textit{Bad}_{CF}$. In particular, $\textit{Bad}\equiv\textit{Bad}_{CF}$.
\end{lem}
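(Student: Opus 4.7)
The plan is to prove the equivalence by using two classical properties of convergents: the best approximation property and the sharp two-sided estimate $\frac{1}{q_{k+1}+q_k} < \|q_k\alpha\| < \frac{1}{q_{k+1}}$. Combined with the recursion $q_{k+1} = a_{k+1}q_k + q_{k-1}$, these will translate a bound on the partial quotients into a bound on $q\|q\alpha\|$ in both directions.

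First, I would recall (or quickly derive from $p_k q_{k-1} - p_{k-1} q_k = (-1)^{k-1}$) that $\|q_k\alpha\| = |q_k\alpha - p_k|$ and that this quantity is squeezed between $\frac{1}{q_{k+1}+q_k}$ and $\frac{1}{q_{k+1}}$. I would also cite the classical best-approximation property: if $1 \leq q < q_{k+1}$, then $\|q\alpha\| \geq \|q_k\alpha\|$. These are standard facts in Hardy--Wright, to which the excerpt already refers.

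For the direction $\textit{Bad}_{CF} \subseteq \textit{Bad}$, assume every partial quotient satisfies $a_i \leq M$. Given any integer $q \geq 1$, choose $k$ so that $q_k \leq q < q_{k+1}$. Then by best approximation and the lower bound on $\|q_k\alpha\|$,
\[
q\cdot\|q\alpha\| \;\geq\; q_k\cdot\|q_k\alpha\| \;>\; \frac{q_k}{q_{k+1}+q_k} \;=\; \frac{q_k}{(a_{k+1}+1)q_k + q_{k-1}} \;\geq\; \frac{1}{M+2},
\]
so $\liminf_{q\to\infty} q\|q\alpha\| \geq \frac{1}{M+2} > 0$, giving $\alpha \in \textit{Bad}$.

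For the contrapositive of the other direction, suppose $\alpha \notin \textit{Bad}_{CF}$, so there is a subsequence $a_{n_j}\to\infty$. Taking $q = q_{n_j-1}$ and using the upper bound on $\|q_{n_j-1}\alpha\|$ together with $q_{n_j} \geq a_{n_j} q_{n_j-1}$, we get
\[
q_{n_j-1}\cdot\|q_{n_j-1}\alpha\| \;<\; \frac{q_{n_j-1}}{q_{n_j}} \;\leq\; \frac{1}{a_{n_j}} \;\longrightarrow\; 0,
\]
so $\liminf q\|q\alpha\| = 0$ and $\alpha \notin \textit{Bad}$. I do not expect a real obstacle here: the whole argument is a packaging of standard convergent estimates, and the only care needed is to make sure the best approximation property is invoked correctly so that an arbitrary denominator $q$ is controlled by the nearest convergent denominator $q_k \leq q$.
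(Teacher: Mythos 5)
Your proof is correct. The paper itself supplies no proof of this lemma --- it is stated as a classical fact (with reference to Hardy--Wright for the underlying machinery of convergents) and used without further justification --- so there is nothing in the paper to compare against. Your argument is exactly the standard one: the two-sided estimate $\frac{1}{q_{k+1}+q_k} < \|q_k\alpha\| < \frac{1}{q_{k+1}}$, the best-approximation property, and the recursion $q_{k+1} = a_{k+1}q_k + q_{k-1}$ to convert boundedness of partial quotients into the lower bound $q\|q\alpha\| > \frac{1}{M+2}$, and to convert an unbounded subsequence $a_{n_j}\to\infty$ into $q_{n_j-1}\|q_{n_j-1}\alpha\| < \frac{1}{a_{n_j}} \to 0$. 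Both directions are airtight; the only implicit use (that $q_{k-1}\leq q_k$, so $q_{k+1}+q_k \leq (M+2)q_k$) holds for all $k\geq 0$ with the paper's conventions $q_{-1}=0$, $q_0=1$.
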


Since the convergent denominators $\{q^{(n)}_k\}_{k\in\mathbb{N}}$ of $p^n\alpha$ are "good" approximations for $\|q_k(p^n\alpha)\|$, that $\{p^n\cdot{q^{(n)}_k\}}_{k\in\mathbb{N}}$ are also "good" approximations of $\|q_k\alpha\|$. Using a more formal version of this reasoning, we can recover the following reformulation of pLC.

\begin{pro} Let $\alpha\in\textit{Bad}$, then $\alpha$ satisfies pLC if and only if:
$$  \limsup_{i \rightarrow\infty} B(p^i\alpha)=\infty 
$$
\end{pro}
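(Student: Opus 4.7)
The plan is to translate pLC into a statement purely about continued-fraction data of the multiples $p^n\alpha$. Write any $q\in\mathbb{N}$ as $q=p^n m$ with $\gcd(m,p)=1$; then $q\cdot|q|_p=m$ and $\|q\alpha\|=\|m\cdot p^n\alpha\|$, so pLC for $\alpha$ is equivalent to
\[
\liminf_{\substack{n\ge 0,\ m\ge 1\\\gcd(m,p)=1}} m\cdot\|m\cdot p^n\alpha\|=0.
\]
Once in this form, both directions follow from the classical two-sided estimate $\tfrac{1}{q_k+q_{k+1}}\le\|q_k\beta\|<\tfrac{1}{q_{k+1}}$ applied to $\beta=p^n\alpha$, combined with the best-approximation property of convergents.

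For the ``$\Leftarrow$'' direction, assume $\limsup_i B(p^i\alpha)=\infty$. Pick $n_j\uparrow\infty$ and indices $k_j$ such that the partial quotient $a^{(n_j)}_{k_j+1}$ of $p^{n_j}\alpha$ tends to $\infty$, and let $q^{(n_j)}_{k_j}$ be the associated convergent denominator. Set $Q_j:=p^{n_j}q^{(n_j)}_{k_j}$. Factoring $q^{(n_j)}_{k_j}=p^{s_j}m_j$ with $\gcd(m_j,p)=1$ gives $Q_j\cdot|Q_j|_p=m_j\le q^{(n_j)}_{k_j}$, while
\[
\|Q_j\alpha\|=\|q^{(n_j)}_{k_j}\cdot p^{n_j}\alpha\|\ \le\ \frac{1}{q^{(n_j)}_{k_j+1}}\ \le\ \frac{1}{a^{(n_j)}_{k_j+1}\,q^{(n_j)}_{k_j}}.
\]
Multiplying these yields $Q_j\cdot|Q_j|_p\cdot\|Q_j\alpha\|\le 1/a^{(n_j)}_{k_j+1}\to 0$, and since $Q_j\ge p^{n_j}\to\infty$, this gives pLC.

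For the ``$\Rightarrow$'' direction I argue the contrapositive. If $\limsup_i B(p^i\alpha)<\infty$, then using that $\textit{Bad}$ is closed under multiplication by nonzero integers one finds a single $M$ with $B(p^n\alpha)\le M$ for every $n\ge 0$. A standard best-approximation argument then shows that for any irrational $\beta$ with $B(\beta)\le M$ and any $m\ge 1$, choosing $k$ with $q_k\le m<q_{k+1}$ gives
\[
m\,\|m\beta\|\ \ge\ q_k\,\|q_k\beta\|\ \ge\ \frac{q_k}{q_k+q_{k+1}}\ \ge\ \frac{1}{a_{k+1}+2}\ \ge\ \frac{1}{M+2}.
\]
Taking $\beta=p^n\alpha$ in the reformulation of pLC produces $q\cdot|q|_p\cdot\|q\alpha\|\ge 1/(M+2)>0$ uniformly in $q$, so pLC fails.

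The conceptually interesting step is the reformulation separating $m$ from $p^n\alpha$, since it isolates the $p$-adic contribution from the Archimedean one; beyond that, both halves are essentially bookkeeping with convergents. The one subtle point to verify carefully is the passage from $\limsup_i B(p^i\alpha)<\infty$ to a \emph{uniform} bound $B(p^n\alpha)\le M$ valid for \emph{all} $n$, which genuinely uses the hypothesis $\alpha\in\textit{Bad}$ together with the invariance of $\textit{Bad}$ under multiplication by $p$.
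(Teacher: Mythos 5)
Your proof is correct. Note that the paper does not actually present an argument for this proposition --- it simply cites the appendix of \cite{MMT} (``in preparation'') --- so there is no in-text proof to compare against; but the route you take, factoring $q=p^n m$ with $\gcd(m,p)=1$ so that $q\cdot|q|_p\cdot\|q\alpha\|=m\cdot\|m\,p^n\alpha\|$, then using $\|q_k\beta\|\le 1/q_{k+1}\le 1/(a_{k+1}q_k)$ for one direction and the Lagrange best-approximation property together with $\|q_k\beta\|\ge 1/(q_k+q_{k+1})$ for the other, is the standard reformulation and almost certainly what the cited source does. The one step you rightly flag as delicate --- upgrading $\limsup_i B(p^i\alpha)<\infty$ to a \emph{uniform} bound $B(p^n\alpha)\le M$ over \emph{all} $n\ge 0$ --- is handled correctly: $\limsup$ finiteness bounds all but finitely many $B(p^i\alpha)$, and each of the remaining finitely many is finite precisely because $p^i\alpha\in\textit{Bad}$ (using that $\textit{Bad}$ is stable under multiplication by a nonzero integer), which is exactly where the hypothesis $\alpha\in\textit{Bad}$ enters.
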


\begin{proof} See \cite{MMT}, Appendix.
\end{proof}


\begin{remark} It is worth noting that our definition of $B(\alpha)$ excludes the "$a_0$" term. This is due to the fact that, if we were to include this term, then $\limsup_{n \rightarrow{\infty}} B(p^n\alpha)=\infty$ for every $\alpha\in\mathbb{R}$, since  $a^{(k)}_0={\lfloor{p^k\alpha}\rfloor}\rightarrow\infty$, where $\overline{p^k\alpha}=[{a^{(k)}_0;a^{(k)}_1,\ldots}]$. \end{remark}

Since the latter formulation of pLC is a condition on continued fractions, it would be useful to construct a way of computing $\overline{p^n\alpha}$ from the continued fraction $\overline\alpha$. For every $\alpha\in\mathbb{R}$, we can construct a bijective map between $\alpha$ and $\overline\alpha$ (if $\alpha\in\mathbb{Q}$, take $\overline\alpha$ with 1 as the final partial quotient) and a bijective map between $\alpha$ and $p\alpha$. Thus we should be able to construct the bijective map $\overline{p}:\overline{\alpha}\rightarrow\overline{p\alpha}$ to get the following commutative diagram:

\begin{center}
\scalebox{1.2}{
    \xymatrix{
        \alpha \ar[r]^p \ar[d]_{} &  p\alpha \ar[d]^{} \\
        \overline\alpha  \ar@{.>}[r]_{\overline{p}}       & \overline{p\alpha}
}}
\end{center}

We can view continued fractions as discrete realisations of continuous objects and so, we can think of the $\overline{p}$ map as a map between discrete structures. Therefore, the reformulation of pLC produces our motivating question:\par\vspace{\baselineskip}
"Can we construct the $\overline{p}$ map to directly compute $\overline{p\alpha}$ from $\overline{\alpha}$?"
\par\vspace{\baselineskip}

In our setting we will replace $p$, prime, with $n$, a natural number, and $\overline{p}$ with $\overline{n}$ analogously. 

\subsection{Continued Fractions as Cutting Sequences}\label{CFCS}

In this section we will introduce the notion of cutting sequences of both geodesic rays and paths with an ideal triangulation in $\mathbb{H}$, and recall some of the main results of C. Series in \cite{GMN} and \cite{MSCF}. 
 

\subsubsection{Cutting Sequences of Geodesic Rays}

In this paper we will take $\mathbb{H}$ to be the upper half plane $\{z\in\mathbb{C}\cup\{\infty\}:Im(z)\geq0\}$ with boundary $\partial\mathbb{H}=\mathbb{R}\cup\{\infty\}$. Geodesic lines are given by Euclidean half-lines of the form $\{a+iy:0\leq{y}\leq\infty\}$ and semicircles centred on $\partial{\mathbb{H}}$. We define a hyperbolic $n$-gon with vertices $z_1,z_2,\ldots,z_n\in\mathbb{H}$, to be the closed region bounded by $l_1,\ldots,l_n$, where $l_i$ is the geodesic segment between $z_i$ and $z_{i+1}$ (taking $z_{n+1}=z_1$). An \textit{ideal triangle} is a hyperbolic $3$-gon, with all vertices lying on $\partial\mathbb{H}$ and an \textit{ideal triangulation} of $\mathbb{H}$ is an infinite collection of ideal triangles $T$ such that the closure of these triangles cover $\mathbb{H}$ and for any two triangles $\tau_1,\tau_2$ in $T$, $\tau_1\cap\tau_2=\emptyset$. See Fig.~\ref{LT} (in Section \ref{csp}).

Let $\zeta$ be an oriented geodesic, which enters a triangle $\bigtriangleup{ABC}$, labelled clockwise, through the edge $AB$. We define the triangle to be a \textit{left triangle} for $\zeta$ if the geodesic leaves through the edge $BC$ or  a \textit{right triangle} if the geodesic leaves through the edge $AC$. If the geodesic, instead leaves through the vertex $C$, we can view the triangle as either a left triangle or a right triangle. We refer to multiple left triangles in a row as \textit{left fans} and multiple right triangles in a row as \textit{right fans}. For either type of fan, all triangles in the fan will have a common vertex. See Fig.~\ref{fans}.

\begin{figure}[htbp]
        \centering
        \begin{subfigure}[b]{0.45\textwidth}
            \centering
            \includegraphics[width=\textwidth]{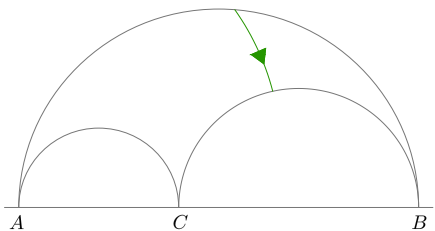}
            \caption{{An example of a left triangle.}}    
            \label{lt}
        \end{subfigure}
        \quad
        \begin{subfigure}[b]{0.45\textwidth}  
            \centering 
            \includegraphics[width=\textwidth]{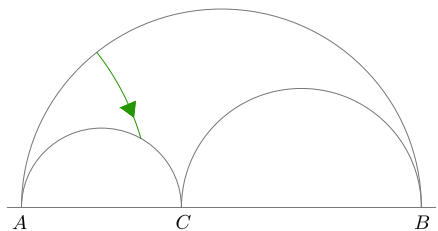}
            \caption{{An example of a right triangle.}}    
            \label{rt}
        \end{subfigure}
        \vskip\baselineskip
        \begin{subfigure}[b]{0.45\textwidth}   
            \centering 
            \includegraphics[width=\textwidth]{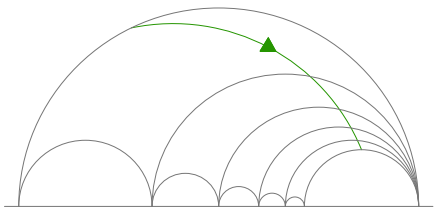}
            \caption{{An example of a left fan.}}    
            \label{lf}
        \end{subfigure}
        \quad
        \begin{subfigure}[b]{0.45\textwidth}   
            \centering 
            \includegraphics[width=\textwidth]{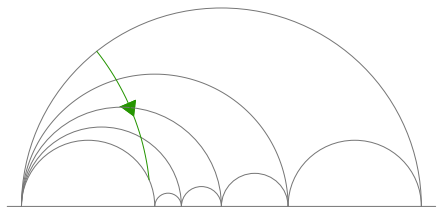}
            \caption{{An example of a right fan.}}    
            \label{rf}
        \end{subfigure}
        \caption{Examples of left and right triangles and fans.} 
        \label{fans}
    \end{figure}


\begin{defn}
Let $T$ be an ideal triangulation of $\mathbb{H}$, let $E$ be any edge of $T$ and let $\zeta$ be an oriented geodesic ray starting at $E$ and terminating at some point in $\partial{\mathbb{H}}$. The $\textit{cutting sequence}$ of $\zeta$ with respect to $T$, denoted $(\zeta,T)$, is the potentially infinite word in the alphabet $F_2=<L,R>$, formed by the following process:
 
\begin{itemize}
\item Start with the (empty) word $L^0$.
\item Whenever $\zeta$ cuts $T$ to form a left triangle, add a letter $L$ to the end of the word.
\item Whenever $\zeta$ cuts $T$ to form a right triangle, add a letter $R$ to the end of the word.
\item Repeat this process iteratively to obtain the cutting sequence $(\zeta,T)\hspace{-2pt}:=\hspace{-2pt}L^{n_0}\hspace{-2pt}R^{n_1}\hspace{-2pt}L^{n_2}\cdots\hspace{-1pt}$, where $n_0\in\mathbb{N}\cup\{0\}$ and $n_i\in\mathbb{N}$.
\end{itemize}

\end{defn}

We will identify the cutting sequence $(\zeta,T)=L^{n_0}R^{n_1}L^{n_2}\cdots$ with the sequence of indices $\{n_0,n_1,n_2,...\}$, where $n_0\in\mathbb{N}\cup\{0\}$ and $n_i\in\mathbb{N}$. Since every cutting sequence is of the form $\{\{n_0,n_1,n_2,\ldots\}:n_0\in\mathbb{N}\cup\{0\}, n_i\in\mathbb{N}\quad \forall{i}\in\mathbb{N}\}$, there is an obvious bijection between cutting sequences of the above form and the continued fraction expansion of some $\alpha\in\mathbb{R}_{>0}$. Explicitly, we can take the following bijection $\eta:=\{n_0,n_1,n_2,\ldots\}\mapsto[n_0;n_1,n_2,\ldots]$.


\begin{remark} If a cutting sequence of an oriented geodesic ray $\zeta$ is finite, then the geodesic ray terminates at a vertex of the triangulation. The final triangle which $\zeta$ intersects can be thought of either a left triangle or right triangle and thus, can either be added to the final fan or represent a new fan on its own. This is analogous to the fact that the two finite continued fractions expansions $[a_0;a_1,\ldots,a_r]$ and $[a_0;a_1,\ldots,a_r-1,1]$ are equivalent.
\end{remark}

Occasionally, it may be useful to take the cutting sequence of a geodesic ray $\zeta$ starting at an edge $E$, from an  edge $F$ later cutting sequence of $(\zeta,T)$. We will denote this cutting sequence as $(\zeta,T)_F$. We can think of $(\zeta,T)_F$ as a copy of $(\zeta,T)=(\zeta,T)_E$ with a prefix removed. That is, $(\zeta,T)_F$ coincides with $(\zeta,T)$ except for finitely many terms at the start. It is worth noting that due to the convention of always starting with an $L^0$ term, the types of triangle will also coincide for these terms when written as sequences of indices. 

Every edge $E$ in $T$ separates $\mathbb{H}$ into two regions, which we will arbitrarily label $E_+$ and $E_-$. Similarly, $E$ will separate any geodesic in $\mathbb{H}$, which it intersects transversly, into two disjoint geodesic rays, one contained entirely in $E_+$ and the other contained entirely in $E_-$. As a result, any geodesic ray $\zeta$ starting at $E$ will be contained entirely in one of these two regions. We will denote the set of all geodesic rays starting at $E$, which are contained entirely in $E_+$ as $Z_{E_+}$ and likewise will denote the set of all geodesic rays starting at $E$, which are contained entirely in $E_-$ as $Z_{E_-}$. Given a particular cutting sequence and triangulation $T$, for every edge $E$ in $T$ we can find two distinct classes of geodesic rays which have this cutting sequence: one in $Z_{E_+}$ and the other in $Z_{E_-}$. These classes are completely determined by the endpoint of one such geodesic ray. That is to say, all geodesic rays starting at $E$ and terminating at a fixed point $\alpha\in\mathbb{R}\cup\{\infty\}$ will have the same cutting sequence relative to $T$. The proof of this statement is an analogue of Lemma 3.1.1 from \cite{MSCF}, which we explicitly state in section \ref{FC}.

Due to the fact that, for any geodesic ray $\zeta$ and any ideal triangulation $T$, all orientation preserving isomorphisms of $\mathbb{H}$ ($Isom^+(\mathbb{H})$) preserve both the notions of left and right triangles, and how $\zeta$ and $T$ intersect each other, we get the following lemma.

\begin{lem}
Cutting sequences are invariant under $Isom^+(\mathbb{H})=PSL_2(\mathbb{R})$: If $\phi\in{Isom^+(\mathbb{H})}$, then for any geodesic ray $\zeta$ and any triangulation $T$, $(\zeta,T)=\phi((\zeta,T))=(\phi(\zeta),\phi(T))$. 
\end{lem}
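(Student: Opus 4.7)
The plan is to unpack the definition of the cutting sequence and verify, letter-by-letter, that applying $\phi$ to both $\zeta$ and $T$ reproduces the same sequence. Because $\phi \in PSL_2(\mathbb{R})$ acts on $\mathbb{H}$ by hyperbolic isometries, it sends geodesics to geodesics and (extending continuously to $\partial\mathbb{H}$) sends ideal points to ideal points. Hence $\phi(\zeta)$ is again an oriented geodesic ray with starting edge $\phi(E)$ and terminal point $\phi(\alpha) \in \partial\mathbb{H}$, and $\phi(T)$ is again an ideal triangulation of $\mathbb{H}$ whose triangles are precisely the images of the triangles of $T$.

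Next I would establish a bijective correspondence between the triangles of $T$ crossed by $\zeta$ and those of $\phi(T)$ crossed by $\phi(\zeta)$. This is immediate from the fact that $\phi$ is a homeomorphism: $\zeta \cap \tau \neq \emptyset$ if and only if $\phi(\zeta) \cap \phi(\tau) \neq \emptyset$, and an entry/exit edge of $\tau$ maps to an entry/exit edge of $\phi(\tau)$. In particular the $k$-th triangle crossed by $\zeta$ corresponds to the $k$-th triangle crossed by $\phi(\zeta)$.

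The heart of the argument is that the labels $L$ and $R$ are preserved. Suppose $\zeta$ enters an ideal triangle $\triangle ABC$ (vertices labelled clockwise) through edge $AB$ and exits through $BC$, so this triangle contributes an $L$. Since $\phi$ is orientation-\emph{preserving}, the triple $(\phi(A),\phi(B),\phi(C))$ is again a clockwise labelling of $\phi(\triangle ABC)$, and $\phi(\zeta)$ enters through $\phi(A)\phi(B)$ and exits through $\phi(B)\phi(C)$, so the same triangle contributes an $L$ to $(\phi(\zeta),\phi(T))$. The analogous statement holds for right triangles. Concatenating these letters in order yields $(\phi(\zeta),\phi(T)) = (\zeta,T)$, which is the middle equality; the equality $(\zeta,T) = \phi((\zeta,T))$ is trivial since $(\zeta,T)$ is a word in $\{L,R\}$ on which $\phi$ acts as the identity.

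The main (and essentially only) subtle point is the appeal to orientation-preservation: if $\phi$ were merely an isometry it could reverse the clockwise labelling and swap the roles of $L$ and $R$, so this is where the hypothesis $\phi \in \mathrm{Isom}^+(\mathbb{H})$ is used crucially. Everything else is a direct consequence of $\phi$ being a homeomorphism of $\mathbb{H} \cup \partial\mathbb{H}$ that maps geodesics to geodesics.
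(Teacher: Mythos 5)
Your proposal is correct and takes essentially the same approach as the paper. The paper treats this lemma as an immediate consequence of the one-sentence observation that elements of $Isom^+(\mathbb{H})$ preserve both the notions of left and right triangles and the intersection pattern of $\zeta$ with $T$, offering no separate formal proof; your write-up simply makes that observation explicit, correctly identifying orientation-preservation (hence preservation of the clockwise labelling, hence of the $L$/$R$ assignment) as the crux.
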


If we take an orientation-reversing automorphism $\psi\in{Isom(\mathbb{H})}\setminus{Isom^+(\mathbb{H})}$, the notions of left and right triangles swap and as such  $(\psi(\zeta),\psi(T))=(-\zeta,T)=(\zeta^{-1},T)$. Here, we take $(\zeta,T)=\{a_0,a_1,\ldots\}$ and assume $\zeta\in{Z_{E_+}}$. Then $\zeta^{-1}$  is a geodesic ray in $Z_{E_+}$ with $(\zeta^{-1},T)=\{0,a_0,a_1,\ldots\}$ and $-\zeta$ is a geodesic ray in $Z_{E_-}$ with $(-\zeta,T)=\{0,a_0,a_1,\ldots\}$.

\subsubsection{Cutting Sequences of Paths}\label{csp}

It will often be useful to deal with paths starting from some edge $E$ and terminating at some point in $\partial\mathbb{H}$, instead of geodesic rays. In particular, it will be useful to see how homotopy affects cutting sequences. To do so, we will extend the definition of cutting sequences to include paths which may double back on themselves. We do this by labelling the sides of the triangles in the triangulation and expressing cutting sequences as powers of these labels.


In order to form a labelled triangulation, we first must create two labelling maps $\phi_1$ and $\phi_2$. Here, we take an arbitrary triangle $\triangle{ABC}$, with vertices labelled clockwise. We then define $\phi_1:\{L^{-1},L,R\}\longrightarrow{\triangle{ABC}}$ to be the labelling map such that $L^{-1}$ labels the inside of the edge $AB$, $L$ labels the inside of the edge $BC$ and $R$ labels the inside of the edge $AC$. Similarly, we define  $\phi_2:\{R^{-1},L,R\}\longrightarrow{\triangle{ABC}}$ to be the labelling map such that $R^{-1}$ labels the inside of the edge $AB$, $L$ labels the inside of the edge $BC$ and $R$ labels the inside of the edge $AC$. See Fig.~\ref{phi}.

\begin{figure}[htbp]
          \centering
        \begin{subfigure}[b]{0.4\textwidth}
            \centering
            \includegraphics[width=\textwidth]{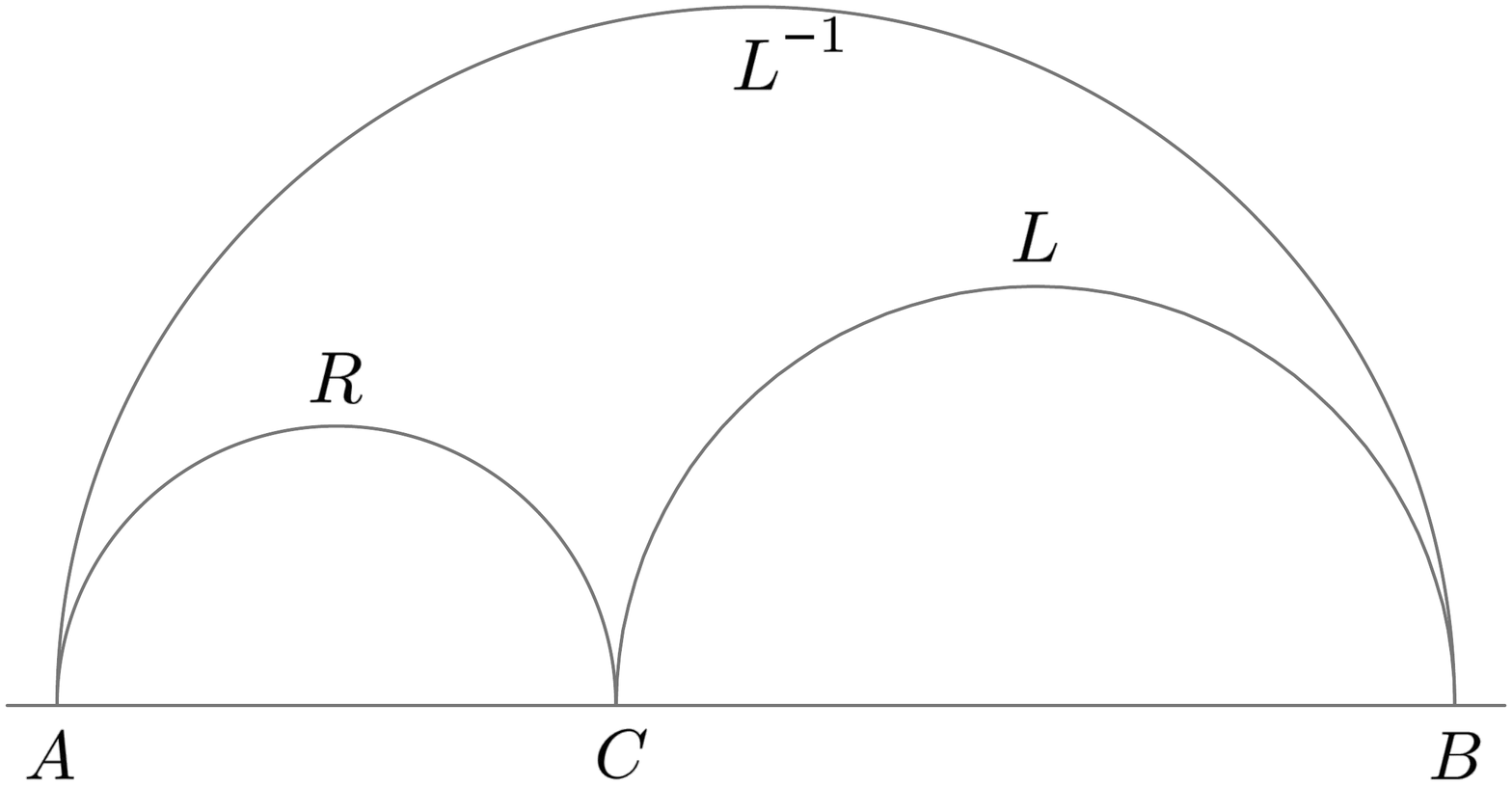}
            \caption{{An example of $\phi_1$ labelling a triangle $\triangle{ABC}$.}}    
            \label{p1}
        \end{subfigure}
        \quad
        \begin{subfigure}[b]{0.4\textwidth}  
            \centering 
            \includegraphics[width=\textwidth]{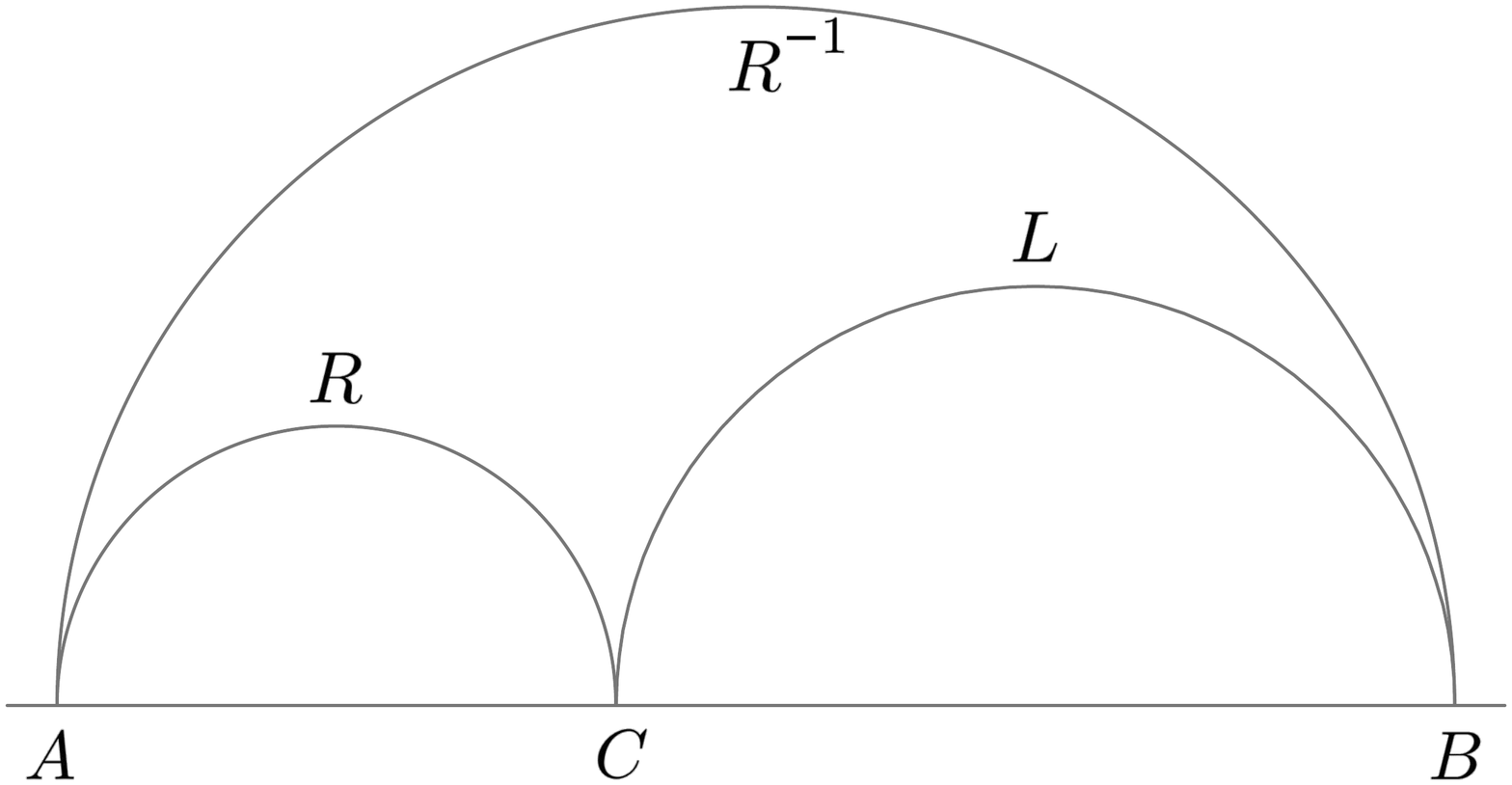}
            \caption{{An example of $\phi_2$ labelling a triangle $\triangle{ABC}$.}}    
            \label{p2}
        \end{subfigure}
        
        \caption{Examples of $\phi_1$ and $\phi_2$ inducing labelling of an arbitrary triangle $\triangle{ABC}$, with vertices labelled clockwise.}\label{phi}
\end{figure}

Let $T$ be an ideal triangulation of $\mathbb{H}$, $E$ be an edge of $T$ and $\lambda$ be an oriented path in $\mathbb{H}$, which starts at $E$, terminates at $\alpha\in{\partial\mathbb{H}}$ and is otherwise disjoint from $\partial{\mathbb{H}}$. The edge $E$ is an edge of exactly two triangles in $T$: $\tau_+$, which is contained in $E_+$ and $\tau_-$, which is contained in $E_-$. The endpoint $\alpha$ of $\lambda$, will lie in either $E_+$ or $E_-$. Without loss of generality, assume that the end point $\alpha$ lies in ${E_+}$. Label $\tau_+$ using $\phi_1$ such that $L^{-1}$ labels the inside of edge $E$. Then the following algorithm produces a labelling for $T$:\par
\vspace{\baselineskip}

Let $\tau$ be a labelled triangle in $T$. Pick an edge $X$ in $\tau$ and let $\tau'$ be the unique other  triangle (unlabelled) in $T$ with edge $X$.
\begin{itemize}
\item If $X$ has inner label $L$ in $\tau$, label $\tau'$ using $\phi_1$ such that $L^{-1}$ is the inner labelling of $X$ in $\tau'$.

\item If $X$ has inner label $R$ in $\tau$, label $\tau'$ using $\phi_2$ such that $R^{-1}$ is the inner labelling of $X$ in $\tau'$.

\item If $X$ has inner label $L^{-1}$ in $\tau$, label $\tau'$ using $\phi_1$ such that $L$ is the inner labelling of $X$ in $\tau'$.

\item If $X$ has inner label $R^{-1}$ in $\tau$, label $\tau'$ using $\phi_2$ such that $R$ is the inner labelling of $X$ in $\tau'$.

\end{itemize}

Repeat \textit{ad infinitum}. See Fig.~3 for an example of a labelled triangulation.\par
\vspace{\baselineskip}

\begin{figure}[htbp]
  \includegraphics[width=0.8\linewidth]{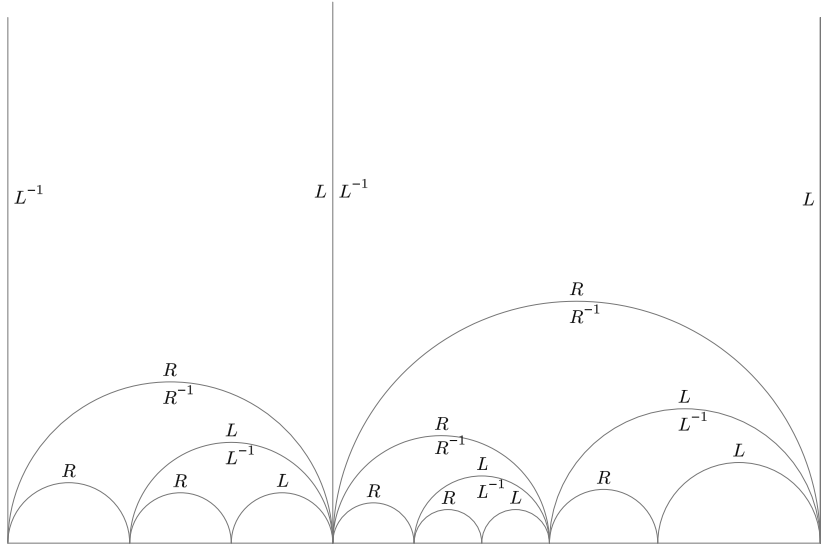} 
  \caption{An example of a labelled triangulation.}
  \label{LT}
\end{figure}

\noindent
The \textit{generalised cutting sequence} of an oriented path $\lambda$ with a labelled triangulation $T$ is defined as follows.

Start with the word $L^0$ in ${F}_2=\langle{L,R}\rangle$. Then every time $\lambda$ passes through an edge of the triangle, append the inner label of that edge (of that triangle) to the word. We define the  \textit{generalised cutting sequence} of $\lambda$ with respect to $T$, which we denote $(\lambda,T)$, to be the word formed by repeating this process iteratively i.e. $(\lambda,T)=L^{m_0}R^{m_1}L^{m_2}\cdots$ with $m_i\in\mathbb{Z}$ $\forall{i\in\mathbb{N}\cup\{0\}}$. Convention will be to always start with a power of $L$ and to have the word alternate between powers of $L$'s and $R$'s, but the index of these letters may be $0$. In particular, a path which passes through an edge and then immediately passes through that edge again would correspond to the term $\cdots{L^{k_1}R^0L^{k_2}}\cdots$ or $\cdots{R^{k_1}L^0R^{k_2}}\cdots$, for some $k_1,k_2\in\mathbb{Z}$. 
As we did for cutting sequences of geodesic rays, we will identify the generalised cutting sequence of a path $(\lambda,T)=L^{m_0}R^{m_1}L^{m_2}\cdots$  with the sequence of indices $\{m_0,m_1,m_2,\ldots\}$, where $m_i\in\mathbb{Z}$ $\forall{i\in\mathbb{N}\cup\{0\}}$.  

\begin{remark}
For $\zeta$ a geodesic ray and an ideal triangulation $T$, the notions of cutting sequence and generalised cutting sequence are equivalent. As a result, we will drop the term "generalised" refer to both as a "cutting sequence".
\end{remark}

We will say that the cutting sequence $(\lambda,T)$ is \textit{reduced} if the word contains no term of the form $gx^0g^{-1}$, where $g\in\langle{L,R}\rangle$ and $x\in\{L,R\}$. We can reduce the cutting sequence by reducing the corresponding word and will denote the class of all equivalent cutting sequences up to reduction as $[\lambda,T]$. It follows quite simply that cutting sequences will be reduced if and only if the corresponding path $\lambda$ does not pass through any edge of $T$ more than once. We can view reduction of the cutting sequence as a homotopy of the path $\lambda$, which preserves $\partial{\mathbb{H}}$. As a result, the classes of equivalent cutting sequences $[\lambda,T]$ are exactly the classes of homotopic paths $[\lambda]^E_\alpha$ with the same starting edge $E$ and endpoint $\alpha$.

We can analogously define the map between cutting sequences and continued fraction expansions $\eta:\{m_0,m_1,m_2,\ldots\}\mapsto[m_0;m_1,m_2,\ldots]$. Since we take alternating letters, continued fractions reduce in the same way that the corresponding words do. For example, if we had a non-reduced word ${g_1}L^{k_1}R^0L^{-k_1}g_2\cdots={g_1}L^{0}{g_2}\cdots$ (for $g_1,g_2\in\langle{L,R}\rangle$), then the corresponding continued fraction expansion would be $[\widetilde{g_1},k_1,0,-k_1,\widetilde{g_2},\ldots]$, which is equivalent under concatenating terms to $[\widetilde{g_1},k_1+(-k_1),\widetilde{g_2},\ldots]=[\widetilde{g_1},0,\widetilde{g_2},\ldots]$. The above construction ensures that the reduction of a cutting sequence directly corresponds to the reduction of the equivalent continued fraction. See Fig.~\ref{fig:test} for an example of two equivalent cutting sequences.

\begin{figure}[htbp]
\centering
\begin{subfigure}{.8\textwidth}
  \centering
  \includegraphics[width=1\linewidth]{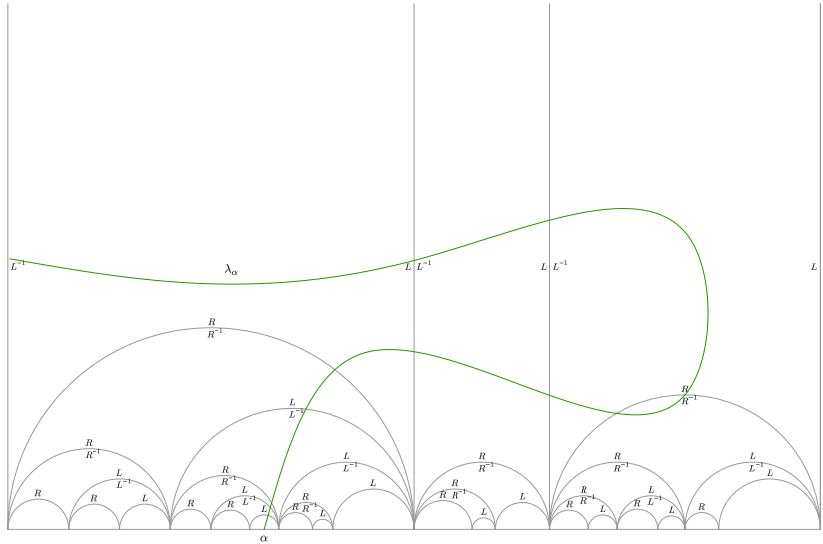}
  \caption{ A path $\lambda_\alpha$ with non-reduced cutting sequence $\{1,1,1,0,-1,1,1,1,\ldots\}$}
  \label{fig:sub1}
\end{subfigure}%
\\
\begin{subfigure}{.8\textwidth}
  \centering
  \includegraphics[width=1\linewidth]{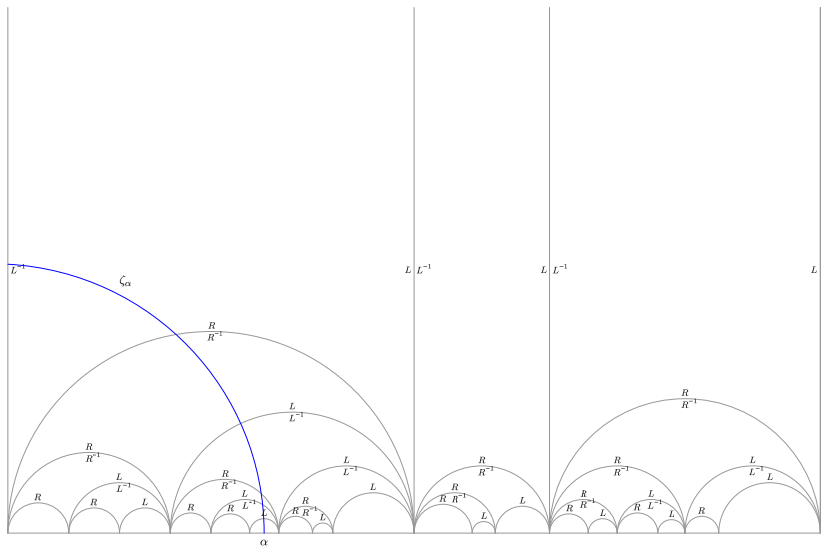}
  \caption{ A geodesic $\zeta_\alpha$ with reduced cutting sequence $\{0;1,2,1,1,\ldots\}$}
  \label{fig:sub2}
\end{subfigure}
\caption{An example of two different paths intersecting a labelled triangulation to form the same cutting sequence up to reduction.}
\label{fig:test}
\end{figure}

\subsubsection{The Farey Complex $\mathcal{F}$}\label{FC}

The Farey complex $\mathcal{F}$ is an ideal triangulation of the upper-half plane $\mathbb{H}$. The vertices are the set $\mathbb{Q}\cup\{\infty\}$. Two vertices $A$ and $B$ have a geodesic edge between them if once written in reduced form, $A=\frac{p}{q}$ and $B=\frac{r}{s}$, we have $\mid{ps-qr}\mid=1$. We say two vertices are neighbours, if they have an edge between them. In this definition, we treat $\infty$ as $\frac{1}{0}$. An equivalent way of interpreting $\mathcal{F}$ is by taking the image of the line between $0$ and $\infty$ under all possible elements of $SL_2(\mathbb{Z})$. See Fig.~\ref{Conv} for a truncated picture of the Farey Complex.

Given two vertices $A=\frac{p}{r}$ and $B=\frac{q}{s}$ in $\mathbb{Q}\cup\{\infty\}$ in reduced form, we can define \textit{Farey addition} $\oplus$ and \textit{Farey subtraction} $\ominus$, as follows:

\begin{center}

$A\oplus{B} := \frac{p+r}{q+s}=\frac{r+p}{s+q}=:B\oplus{A} $\\

$A\ominus{B} := \frac{p-r}{q-s}=\frac{r-p}{s-q}=:B\ominus{A}$

\end{center} 

Simple arithmetic can show that any two neighbours $A$ and $B$ in $\mathcal{F}$ have exactly two neighbours in common, $A\oplus{B}$ and $A\ominus{B}$. It is a well known fact, that given any two neighbours in $\mathcal{F}$, you can generate the whole of $\mathcal{F}$ by iteratively using Farey addition and subtraction of these two points. 

The following theorem, highlights the importance of the Farey Complex with regards to continued fractions.

\begin{theorem}[Theorem A, C. Series \cite{MSCF}]\label{thma} Let $\zeta$ be a geodesic in $\mathbb{H}$ with  endpoints $\alpha_1>0$ and $\alpha_2<0$, and let $I$ be the geodesic line between $0$ and $\infty$, $I_+$ be the region $\{z:Re(z)>0\}$ and $I_-$ be the region $\{z:Re(z)<0\}$. Then, for $\zeta^+=\zeta\cap{Z_{I_+}}$ and $\zeta^-=\zeta\cap{Z_{I_-}}$ (with implicit orientation), $\eta((\zeta^+,\mathcal{F}))$ is the continued fraction expansion of $\alpha_1$ and $\eta((\zeta^-,\mathcal{F}))$ is the continued fraction expansion of $\frac{-1}{\alpha_2}$.
\end{theorem}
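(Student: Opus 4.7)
The plan is to prove this by induction on the partial quotients of the continued fraction expansion of $\alpha_1$ (respectively $-1/\alpha_2$), exploiting the invariance of the Farey complex $\mathcal{F}$ under the action of $PSL_2(\mathbb{Z})$ together with the earlier lemma that cutting sequences are invariant under $\mathrm{Isom}^+(\mathbb{H})$. I first treat $\zeta^+$, which terminates at $\alpha_1>0$, by analysing the local structure of $\mathcal{F}$ on the $I_+$ side of the edge $I$.

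The triangle in $\mathcal{F}$ adjacent to $I$ in $I_+$ has vertices $0$, $1$, $\infty$, and the vertex $\infty$ is shared by the infinite fan of triangles $\{(n,n+1,\infty):n\geq 0\}$. The first key step is to read the number $a_0:=\lfloor\alpha_1\rfloor$ directly off this fan: since the endpoint $\alpha_1$ lies in $(a_0,a_0+1)$, the ray $\zeta^+$ crosses a run of fan triangles all sharing the vertex $\infty$, which by the clockwise labelling convention must all be of the same type and so contribute a single block (say $L^{a_0}$) to the cutting sequence, before exiting the fan through the edge between $a_0$ and $a_0+1$. The case $\alpha_1<1$ is handled by setting $a_0=0$, so that the cutting sequence simply begins with the empty run $L^0$, consistent with the convention that the first index lies in $\mathbb{N}\cup\{0\}$.

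Next I would apply the orientation-preserving M\"obius transformation $\psi(z)=-1/(z-a_0)$, which lies in $PSL_2(\mathbb{Z})$ and therefore preserves $\mathcal{F}$. It sends $a_0\mapsto\infty$, $\infty\mapsto 0$, and $\alpha_1\mapsto -1/(\alpha_1-a_0)=:\beta<0$, mapping the exit edge $(a_0,\infty)$ of the initial fan onto $I$ and the residual portion of $\zeta^+$ to a geodesic ray $\psi(\zeta^+)\in Z_{I_-}$ terminating at $\beta$. By the invariance lemma, the cutting sequence of $\psi(\zeta^+)$ with $\mathcal{F}$ coincides with the tail of $(\zeta^+,\mathcal{F})$ beyond the initial run. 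Since $\psi$ interchanges $I_+$ and $I_-$, the next fan sits on the opposite side of $I$, and the clockwise labelling convention therefore flips the triangle type from $L$ to $R$; the next run of the cutting sequence is $R^{a_1}$ with $a_1=\lfloor -1/\beta\rfloor$, which is precisely the next partial quotient of $\alpha_1$ produced by the Gauss map. Iterating this construction and applying the bijection $\eta$, induction on the length of the continued fraction yields $\eta((\zeta^+,\mathcal{F}))=[a_0;a_1,a_2,\ldots]=\overline{\alpha_1}$.

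For $\zeta^-$ terminating at $\alpha_2<0$, I would reduce to the previous case using the inversion $\sigma(z)=-1/z\in PSL_2(\mathbb{Z})$, which preserves $\mathcal{F}$, fixes the edge $I$ setwise, swaps $I_+$ with $I_-$, and sends $\alpha_2$ to $-1/\alpha_2>0$. Thus $\sigma(\zeta^-)\in Z_{I_+}$ terminates at $-1/\alpha_2$ and the first part of the argument gives $\eta((\sigma(\zeta^-),\mathcal{F}))=\overline{-1/\alpha_2}$; invariance of cutting sequences under $\mathrm{Isom}^+(\mathbb{H})$ then transfers this equality to $\zeta^-$ itself. The main obstacle I anticipate is the careful bookkeeping of the left/right and orientation conventions throughout the recursion, in particular verifying that each successive $\psi$ reliably swaps the triangle type so that the cutting sequence alternates between powers of $L$ and $R$ in lock-step with the continued fraction, and handling the boundary case $a_0=0$ cleanly so that the inductive step still applies without shifting or inverting the resulting word.
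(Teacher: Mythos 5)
The paper does not give a proof of this statement at all: it is quoted verbatim (as Theorem~A) from C.~Series, \emph{The Modular Surface and Continued Fractions}, and used as a black box, so there is no internal argument for you to be compared against. That said, your sketch is essentially the standard argument, and is in the spirit of Series's own proof: read off the first block from the $\infty$-fan, normalise the exit edge back to $I$ by an element of $PSL_2(\mathbb{Z})$, use invariance of cutting sequences under $\mathrm{Isom}^+(\mathbb{H})$ to transfer, and iterate.

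A few places need tightening. First, a computational slip: with $\psi(z)=-1/(z-a_0)$ you get $\beta=-1/(\alpha_1-a_0)$, so the next partial quotient is $a_1=\lfloor -\beta\rfloor=\lfloor 1/(\alpha_1-a_0)\rfloor$, not $\lfloor -1/\beta\rfloor$ (which is $\lfloor \alpha_1-a_0\rfloor=0$). Second, your description of the first block is internally inconsistent: you first say the ray "exit[s] the fan through the edge between $a_0$ and $a_0+1$," and then (correctly) apply $\psi$ to the "exit edge $(a_0,\infty)$." The initial left run consists of the $a_0$ triangles $(0,1,\infty),\dots,(a_0-1,a_0,\infty)$ and terminates at the edge $(a_0,\infty)$; the triangle $(a_0,a_0{+}1,\infty)$ also has vertex $\infty$ but is already a \emph{right} triangle, so "all triangles sharing vertex $\infty$ are the same type" is not quite the right justification for the count $a_0$. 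What you actually need is that the ray exits $(k-1,k,\infty)$ through $(k,\infty)$ for $1\le k\le a_0$ and exits $(a_0,a_0{+}1,\infty)$ through $(a_0,a_0{+}1)$, which is exactly the statement $a_0<\alpha_1<a_0{+}1$. Third, "$\psi$ interchanges $I_+$ and $I_-$" is only literally true when $a_0=0$; the relevant fact is that $\psi$ sends the half-plane $\{\mathrm{Re}(z)>a_0\}$ containing the residual ray to $I_-$. Finally, "induction on the length of the continued fraction" only makes sense literally for rational $\alpha_1$; for irrational endpoints one should phrase the recursion as: for every $k$, the first $k$ blocks of the cutting sequence agree with $a_0,\dots,a_{k-1}$. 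None of these are fatal — the underlying idea is sound and matches the cited source — but as written the step that produces $a_1$ is wrong and the first-fan count is not actually justified.
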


An immediate consequence of this theorem is that if we take $Z_\alpha$ to be the set of all geodesic rays starting at the y-axis $I$ and terminating at a fixed point $\alpha\in\mathbb{R}\setminus\{0\}$, then $(\zeta,\mathcal{F})=(\zeta',\mathcal{F})$ for all $\zeta,\zeta'\in{Z_\alpha}$. This result can be extended to Lemma 3.3.1 from \cite{MSCF}:

\begin{lem}[Lemma 3.1.1, C. Series \cite{MSCF}] Let $\zeta_{\alpha,1}$ and $\zeta_{\alpha,2}$ be two geodesic rays with the same endpoint $\alpha\in\mathbb{R}\setminus\mathbb{Q}$ but potentially different start points. Then the cutting sequences of $\zeta_{\alpha,1}$ and $\zeta_{\alpha,2}$ with respect to $\mathcal{F}$, eventually coincide. In particular, there exists an edge $E$ which both $\zeta_{\alpha,1}$ and $\zeta_{\alpha,2}$ intersect, such that $(\zeta_{\alpha,1},\mathcal{F})_{E}=(\zeta_{\alpha,2},\mathcal{F})_{E}$.
\end{lem}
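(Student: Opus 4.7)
The plan is to show that both $\zeta_{\alpha,1}$ and $\zeta_{\alpha,2}$ eventually pass through a common edge of $\mathcal{F}$, and then to invoke the observation recorded just above the lemma: all geodesic rays starting at a fixed edge $E$ and terminating at a fixed $\alpha\in\mathbb{R}\setminus\mathbb{Q}$ share the same cutting sequence with respect to $\mathcal{F}$. So the whole problem reduces to producing such a common edge $E$.

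To produce $E$, I would construct a canonical nested family of edges converging to $\alpha$ using convergents. Write $\alpha=[a_0;a_1,a_2,\ldots]$ with convergents $p_n/q_n$. By the classical identity $|p_{n-1}q_n-p_nq_{n-1}|=1$, consecutive convergents are Farey neighbours, so the geodesic $E_n$ between $p_{n-1}/q_{n-1}$ and $p_n/q_n$ is an edge of $\mathcal{F}$. Let $H_n$ denote the open half-plane bounded by the geodesic extension of $E_n$ whose arc at infinity contains $\alpha$. Since the convergents of like parity approach $\alpha$ monotonically from the same side and $|\alpha-p_n/q_n|<1/(q_nq_{n+1})\to 0$, the boundary intervals $\overline{H_n}\cap\partial\mathbb{H}$ form a nested sequence shrinking to $\{\alpha\}$; hence $H_{n+1}\subset H_n$ and $\bigcap_n\overline{H_n}\cap\partial\mathbb{H}=\{\alpha\}$. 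For a geodesic ray $\zeta$ terminating at $\alpha$, since $\zeta(t)\to\alpha$ in $\overline{\mathbb{H}}$ and $\alpha\notin\overline{\mathbb{H}\setminus H_n}$, eventually $\zeta(t)\in H_n$; and for $n$ large, the start point of $\zeta$ lies outside $H_n$. Continuity then forces $\zeta$ to cross $E_n$, and if it crossed $E_n$ twice it would return to $\overline{\mathbb{H}\setminus H_n}$ and fail to converge to $\alpha$, so it crosses $E_n$ exactly once.

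Applying this to both $\zeta_{\alpha,1}$ and $\zeta_{\alpha,2}$, I pick $N$ large enough that each ray crosses $E_N$ and set $E:=E_N$. The tail of each ray beyond its unique crossing of $E$ is a geodesic ray starting on $E$ and terminating at $\alpha$, so the cited observation yields $(\zeta_{\alpha,1},\mathcal{F})_E=(\zeta_{\alpha,2},\mathcal{F})_E$. The main obstacle is the geometric step: verifying that the $H_n$ nest correctly and that a ray converging to $\alpha$ is forced into each $H_n$ permanently. The subtle point is that the relevant half-plane depends on the parity of $n$ (because convergents alternate sides of $\alpha$), so one must check that the boundary-interval nesting survives this alternation; once that is done, the topological crossing argument and the appeal to the preceding observation are routine.
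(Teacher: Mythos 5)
The paper cites this lemma from C.~Series without reproducing a proof, so there is no internal argument to compare against; your proposal is an independent proof and it is correct. The reduction to producing a common edge is the right move, and the observation you invoke is available non-circularly: for the edge $I$ it is the immediate consequence of Theorem~\ref{thma} noted just above the lemma, and it transfers to any edge of $\mathcal{F}$ by $PSL_2(\mathbb{Z})$-invariance of cutting sequences (so there is no dependency on the lemma you are proving, despite the paper's earlier remark that the general observation is ``an analogue of Lemma 3.1.1''). Your geometric construction is also sound: consecutive convergents are Farey neighbours, so $E_n$ is an edge of $\mathcal{F}$; $E_n$ and $E_{n+1}$ share the endpoint $p_n/q_n$ and hence cannot cross inside $\mathbb{H}$, which gives the nesting $H_{n+1}\subset H_n$; the boundary arcs shrink to $\{\alpha\}$, so for $n$ large any fixed start point falls outside $H_n$ while a geodesic ray terminating at $\alpha$ eventually enters and remains inside, forcing a crossing of $E_n$. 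One small simplification: uniqueness of the crossing is automatic, since a geodesic ray and a distinct complete geodesic meet at most once, so the return-to-the-complement argument, while valid, is unnecessary. Taking $N$ large enough to serve both rays, the two tails past $E_N$ are geodesic rays on the $\alpha$-side of $E_N$ terminating at $\alpha$, and the cited observation then gives $(\zeta_{\alpha,1},\mathcal{F})_{E_N}=(\zeta_{\alpha,2},\mathcal{F})_{E_N}$, hence the eventual coincidence of the full cutting sequences.
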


\begin{figure}[htbp]
  \includegraphics[width=0.8\linewidth]{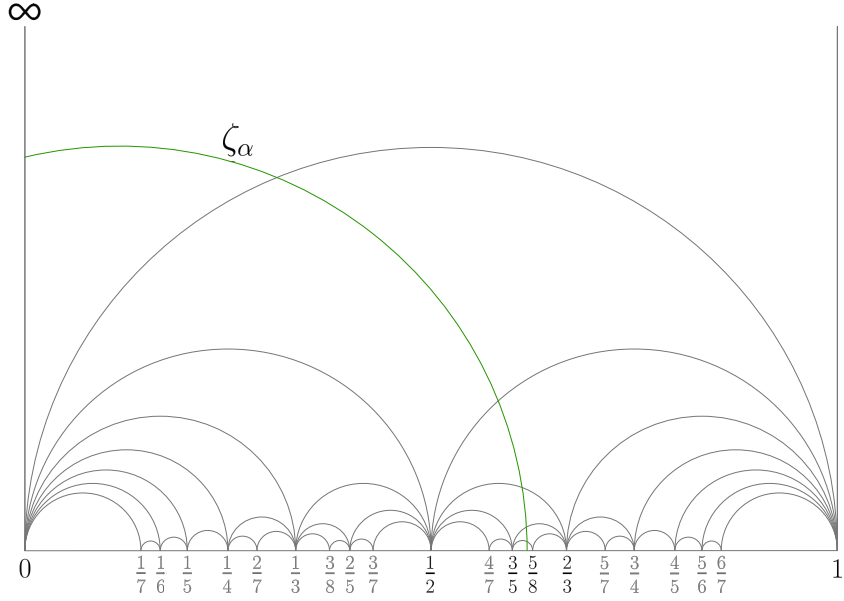} 
  \caption{An image of a geodesic ray $\zeta_{\alpha}$ intersecting the Farey complex $\mathcal{F}$ with (some of the) convergents shown in bold. The endpoint of $\zeta_\alpha$ is $\alpha=\frac{\sqrt{5}-1}{2}$. The convergents are $\infty,0,1,\frac{1}{2},\frac{2}{3},\frac{3}{5},\frac{5}{8},\ldots$.}
  \label{Conv}
\end{figure}

Let $\zeta_\alpha$ be a geodesic ray starting at $I$ and terminating at some point $\alpha>0$. For each fan in the cutting sequence of $\zeta_\alpha$ with $\mathcal{F}$, there is a common vertex of all the triangles in this fan. Using Theorem \ref{thma} and taking truncations of $\overline{\alpha}$, it is easy to show that these vertices are exactly the convergents of $\overline\alpha$. Thus, when a cutting sequence of $\zeta_\alpha$ with $\mathcal{F}$ changes fan, the vertices of the edge at which it changes fan are both convergents of $\overline{\alpha}$. Because $\zeta_\alpha$ passes through pairs of  edges of a triangle, we can similarly recover the set of convergents by taking all the vertices which belong to two or more edges with which the geodesic ray intersects, as well as taking the point at $\infty$. In other words, if two edges of the cutting sequence have a common vertex in $\mathcal{F}$, this vertex is a convergent of  $\overline{\alpha}$. Similarly, if we know a vertex is a convergent, then it is either the point at $\infty$ or the endpoint of at least two edges of $\mathcal{F}$ in our cutting sequence (for $\alpha>1$, $\infty$ will also be the endpoint of at least two edges). See Fig.~\ref{Conv}.

\subsection{Constructing the Dicrete Mulitplicative Map $\overline{n}$}\label{TFC}

Let $n^*:=\begin{psmallmatrix} \sqrt{n} & 0\\ 0 & \frac{1}{\sqrt{n}}\end{psmallmatrix} \in{PSL_2(\mathbb{R})}$ and define $\frac{1}{n^*}:=(n^*)^{-1}$ for $n\in\mathbb{N}$. These two maps scale both $\mathbb{H}$ and $\mathcal{F}$ by a factor of $n$ and $\frac{1}{n}$, respectively. In particular, they multiply the real axis by $n$ and $\frac{1}{n}$. These maps do not preserve $\mathcal{F}$ and we will refer to the images of $\mathcal{F}$ under these maps as $n\mathcal{F}$ and $\frac{1}{n}\mathcal{F}$ respectively. It is worth noting that both of these maps preserve the line $I$ between $0$ and $\infty$, which is our conventional starting edge for our geodesic rays in $\mathcal{F}$. The initial direction of departure is also preserved since $n^*$ and $\frac{1}{n^*}$ preserve the orientation of the $\mathbb{H}$. It follows that for all geodesic rays $\zeta$ starting at $I$, $(\zeta,\frac{1}{n}\mathcal{F})=(n^*(\zeta),\mathcal{F})$. As a result, we can view the map multiplying continued fractions by integer $\overline{n}:\overline{\alpha}\rightarrow{\overline{n\alpha}}$ in terms of a map between $\mathcal{F}$ and $\frac{1}{n}\mathcal{F}$ which preserves $\zeta_\alpha$, a geodesic ray starting at $I$ and terminating at $\alpha$. Explictly, we can express $\overline{n}$ as the map between the pairs $\overline{n}:(\zeta_\alpha,\mathcal{F})\rightarrow{(\zeta_\alpha,\frac{1}{n}\mathcal{F})}$.

Since the $\overline{n}$ map in this context is dependent upon the $\frac{1}{n}$ map, we have only described the $\overline{n}$ map via continuous action on $\mathbb{H}$. Instead we want to describe $\overline{n}$ on discrete structures. To find such a discrete map we will claim that for any natural number $n$, there exists a polygon $P_n$ with side pairings and two decorated copies of $P_n$, $T_{\{1,n\}}$ and $T_{\{n,n\}}$, such that $T_{\{1,n\}}$ tessellates $\mathcal{F}$ and $T_{\{n,n\}}$ tessellates $\frac{1}{n}\mathcal{F}$, under the group action induced by the side pairings of $P_n$.  We will take $P_n$ containing the $y$-axis $I$ and will take this edge to be our starting edge, unless otherwise stated. Then, we  express our geodesic ray $\zeta_\alpha$ as a collection of ordered sub-paths $\bigcup^{\infty}_{i=1}{\zeta_{i,\alpha}}$ intersecting the tessellation induced by $P_n$, such that each sub-path $\zeta_{i,\alpha}$ is entirely contained in some image of $P_n$ in this tessellation. Then, the ordered product of the cutting sequences derived by the sub-paths $\bigcup^{\infty}_{i=1}{\zeta_{i,\alpha}}$ is equivalent the cutting sequences of $\zeta_\alpha$. 
 In particular, replacing $T_{\{1,n\}}$ with $T_{\{n,n\}}$ encodes the multiplication discrete map $\overline{n}:\overline{\alpha}\rightarrow{\overline{n\alpha}}$.

\subsubsection{Common Structure of $\mathcal{F}$ and $\frac{1}{n}\mathcal{F}$}\label{struc}

For a tiling $T$, any element in $Isom^+(T)$ preserves both $T$ and the orientation, so any subgroup of $Isom^+(T)$ also preserves $T$. We can say more, any subgroup $G<Isom^+(T)$ produces a fundamental domain $F$, which when imbued with the correct structure, tessellates $T$. This extra structure is  $F_T:=T\cap{F}$ embedded into $F$ as a \textit{decoration}. We refer to $F\cup{F_T}$ as a \textit{decorated tile} of $G$. 

Since $Isom^+(\mathcal{F})=SL_2(\mathbb{Z})$ is the maximal orientation-preserving group which preserves $\mathcal{F}$, one can show that $Isom^+(\frac{1}{n}\mathcal{F})=\{ n^{-1}\circ{A}\circ{n} : A\in{SL_2(\mathbb{Z})}\}$ is the maximal orientation preserving group which preserves $\frac{1}{n}\mathcal{F}$. We can view an element of this form as a composition of maps: first the map scaling $\frac{1}{n}\mathcal{F}$ to $\mathcal{F}$, followed by an isomorphism of $\mathcal{F}$ and finally the map scaling $\mathcal{F}$ back to $\frac{1}{n}\mathcal{F}$. We can also write $Isom^+(\frac{1}{n}\mathcal{F})=\bigg\{\begin{psmallmatrix} a & \frac{b}{n}\\ nc & d \end{psmallmatrix} \in SL_2(\mathbb{R}): \begin{psmallmatrix} a & b\\ c & d \end{psmallmatrix}\in{SL_2(\mathbb{Z})} \bigg\}$ and $Isom^+(\frac{1}{n}\mathcal{F})$ takes on a natural group structure induced by $Isom^+(\mathcal{F})$.

We can recover a common subgroup of the group of isomorphisms for $\mathcal{F}$ and $\frac{1}{n}\mathcal{F}$ by taking the intersection of $Isom^+(\mathcal{F})$ and $Isom^+(\frac{1}{n}\mathcal{F})$. $Isom^+(\mathcal{F})\cap{Isom^+(\frac{1}{n}\mathcal{F})}=\Gamma_0(n):=\big\{\begin{psmallmatrix} a & b\\ c & d \end{psmallmatrix} \in SL_2(\mathbb{Z}): c\equiv{0} \,\text{(mod n)} \big\}$. $\Gamma_0(n)$ is a subgroup of both $Isom^+(\mathcal{F})$ and $Isom^+(\frac{1}{n}\mathcal{F})$ by construction and therefore preserves the structure of both $\mathcal{F}$ and $\frac{1}{n}\mathcal{F}$. As a result, any fundamental domain $D$ of $\Gamma_0(n)$ with decoration $D\cap{\mathcal{F}}$ or $D\cap{\frac{1}{n}\mathcal{F}}$ will tessellate $\mathcal{F}$ or $\frac{1}{n}\mathcal{F}$, respectively. 

Finding a fundamental domain $D$ for $\Gamma_0(n)$ has been relatively well studied by mathematicians in \cite{AGM} and \cite{AFD}. We can recover the $\overline{n}$ map by replacing the decoration $D\cap{\mathcal{F}}$ of $D$ with the decoration $D\cap{\frac{1}{n}\mathcal{F}}$. Since $\Gamma_0(n_1)<\Gamma_0(n_2)$ if and only if $n_2\mid{n_1}$, given a fundamental domain of $\Gamma_0(n)$, we can also embed structure to tessellate $\frac{1}{d}\mathcal{F}$  for all $d\mid{n}$ and as a result can recover the $\overline{d}$ map. 


It will often be useful to tell when two vertices are neighbours in both $\mathcal{F}$ and $\frac{1}{n}\mathcal{F}$. The following lemma provides an if and only if condition.

\begin{lem}\label{lem2} Two points $A$ and $B$ are neighbours in both $\mathcal{F}$ and $\frac{1}{n}\mathcal{F}$ if and only if they have reduced form $\frac{a}{cn_{1}}$ and $\frac{b}{dn_2}$ with $n=n_1 n_2$ and $|{adn_2-bcn_1}|=1$.
\end{lem}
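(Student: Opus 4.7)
The plan is to prove both directions by direct determinant manipulation. Two ingredients are used throughout: the standard adjacency criterion that $\frac{p}{q}$ and $\frac{r}{s}$ in reduced form are neighbours in $\mathcal{F}$ if and only if $|ps-qr|=1$, and the observation that $A, B$ are neighbours in $\frac{1}{n}\mathcal{F}$ if and only if $nA, nB$ are neighbours in $\mathcal{F}$, which follows since $\frac{1}{n}\mathcal{F}$ is the image of $\mathcal{F}$ under $\frac{1}{n^*}$.

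For the forward direction, assume $A = \frac{a}{cn_1}$, $B = \frac{b}{dn_2}$ in reduced form with $n = n_1 n_2$ and $|adn_2 - bcn_1| = 1$. Adjacency in $\mathcal{F}$ is then the determinant hypothesis itself. For adjacency in $\frac{1}{n}\mathcal{F}$, I would first reduce the determinant equation modulo $n_1$ and modulo $n_2$ to extract the coprimalities $\gcd(n_1, n_2) = \gcd(c, n_2) = \gcd(d, n_1) = 1$. Combined with $A$ and $B$ being reduced (so $\gcd(a,c) = \gcd(b,d)=1$), these show that $nA = \frac{n_2 a}{c}$ and $nB = \frac{n_1 b}{d}$ are already in reduced form, with Farey determinant $|n_2 a \cdot d - c \cdot n_1 b| = |adn_2 - bcn_1| = 1$, giving the required adjacency in $\frac{1}{n}\mathcal{F}$.

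For the converse, write $A = \frac{p}{q}$ and $B = \frac{r}{s}$ in reduced form, so that $\mathcal{F}$-adjacency gives $|ps - qr| = 1$. Set $m_1 = \gcd(n, q)$ and $m_2 = \gcd(n, s)$; then in reduced form $nA = \frac{(n/m_1)p}{q/m_1}$ and $nB = \frac{(n/m_2)r}{s/m_2}$, and a short calculation shows their Farey determinant equals $\frac{n}{m_1 m_2}|ps-qr| = \frac{n}{m_1 m_2}$. The $\frac{1}{n}\mathcal{F}$-adjacency therefore forces the key identity $n = m_1 m_2$. Taking $n_1 = m_1$, $n_2 = m_2$, $a = p$, $b = r$, $c = q/m_1$, $d = s/m_2$ then produces the asserted factored reduced forms, and $|adn_2 - bcn_1| = |ps - qr| = 1$ is automatic. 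The only mildly delicate step in the whole argument is verifying that $\frac{n_2 a}{c}$ and $\frac{n_1 b}{d}$ are genuinely reduced in the forward direction, which is exactly what the modular analysis of the determinant equation supplies; I do not anticipate any real obstacle beyond careful bookkeeping.
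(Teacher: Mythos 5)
Your proof is correct and follows the same overarching strategy as the paper: transport $\frac{1}{n}\mathcal{F}$-adjacency of $A,B$ to $\mathcal{F}$-adjacency of $nA,nB$ and compare Farey determinants of reduced forms. The genuine difference lies in the direction where you start from joint adjacency and derive the factored form. There the paper sets $g_1=\gcd(c',n)$, $h_1=\gcd(d',n)$ together with the complementary divisors $g_2=n/g_1$, $h_2=n/h_1$, establishes $\gcd(g_2,h_2)=1$ and $\gcd(g_1,h_1)=1$ from the two determinant conditions, and then chases a chain of gcd identities to conclude $g_1=h_2$ and $g_2=h_1$, hence $n=g_1h_1$. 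You instead compute the Farey determinant of the reduced forms of $nA$ and $nB$ outright as $\frac{n}{m_1m_2}|ps-qr|=\frac{n}{m_1m_2}$ and read off $n=m_1m_2$ directly from $\mathcal{F}$-adjacency of $nA,nB$; this replaces several lines of bookkeeping with a single calculation and is the cleaner route. In the other direction your modular analysis confirming that $\frac{n_2a}{c}$ and $\frac{n_1b}{d}$ are reduced is sound and in fact supplies a verification the paper silently omits, though it is worth noting that once $|n_2a\cdot d - c\cdot n_1b|=1$ is in hand, reducedness of both fractions is automatic, since any common factor of numerator and denominator would divide the determinant.
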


\begin{proof}
$(\Rightarrow)$: If $A$ and $B$ are neighbours in $\mathcal{F}$ and $\frac{1}{n}\mathcal{F}$, then it follows that $n\cdot{A}$ and $n\cdot{B}$ are also neighbours in $\mathcal{F}$. Let $A=\frac{a'}{c'}$ and $B=\frac{b'}{d'}$ be in reduced form, then since $A$ and $B$ are neighbours in $\mathcal{F}$, $|a'd'-b'c'|=1$. Let $g_1:=gcd(c',n)$ and $h_1:=gcd(d',n)$,  then we can write $c'=cg_1$ and $d'=dh_1$ for some $c,d\in\mathbb{N}$ and take $g_2=\frac{n}{g_1}$ and $h_2=\frac{n}{h_1}$. We compute $gcd(c,g_2)=gcd(c,a')=1$ and $gcd(d,h_2)=gcd(d,b')=1$, and therefore, the points $n\cdot{A}=\frac{g_2a'}{c}$ and $n\cdot{B}=\frac{h_2b'}{d}$ are in reduced form. Since $n\cdot{A}$ and $n\cdot{B}$ are neighbours in $\mathcal{F}$, it follows that $|a'g_2d-b'h_2c|=1$. We require that $gcd(g_2,h_2)=1$, since otherwise $|a'g_2d-b'h_2c|\equiv{0}$ mod $gcd(g_2,h_2)$, which for $gcd(g_2,h_2)\neq{1}$ would lead to a contradiction to $A$ and $B$ being neighbours in $\mathcal{F}$. It follows from the fact that $A$ and $B$ are neighbours in $\mathcal{F}$ and from writing $c'=cg_1$ and $d'=dh_1$, that $gcd(g_1,h_1)=1$. We observe that since $gcd(g_1,h_1)=1$, $g_1=gcd(g_1,n)=gcd(g_1,h_1h_2)=gcd(g_1,h_2)$. Similarly, we observe that $h_2=gcd(h_2,n)=gcd(h_2,g_1g_2)=gcd(h_2,g_1)$ and so $g_1=h_2$. By a similar procedure we find that $g_2=h_1$ and the result follows by relabelling $a=a'$, $c=c'$, $n_1=g_1$ and $n_2=h_1$.

$(\Leftarrow):$ Let $A=\frac{a}{cn_{1}}$ and $B=\frac{b}{dn_2}$, with $n=n_1 n_2$ and $|{adn_2-bcn_1}|=1$. Since $|{adn_2-bcn_1}|=1$ we see that $A$ and $B$ are neighbours in $\mathcal{F}$. Also $n\cdot{A}=\frac{an_2}{c}$ and $n\cdot{B}=\frac{bn_1}{d}$ in reduced form and $|{an_2d-bn_1c}|=|{adn_2-bcn_1}|=1$. Therefore, $n\cdot{A}$ and $n\cdot{B}$ are neighbours in $\mathcal{F}$. By rescaling we now see that $A$ and $B$ are neighbours in $\frac{1}{n}\mathcal{F}$ as required.
\end{proof}

The condition that $A$ and $B$ have reduced form $\frac{a}{cn_{1}}$ and $\frac{b}{dn_2}$ with $n=n_1 n_2$ and $|{adn_2-bcn_1}|=1$, translates to saying that if $A$ and $B$ are neighbours of this form in either $\mathcal{F}$ or $\frac{1}{n}\mathcal{F}$, then necessarily they are neighbours in both $\mathcal{F}$ and $\frac{1}{n}\mathcal{F}$. A direct result of this, is that two points of the form $\frac{a}{nc}$ and $\frac{b}{d}$ are neighbours in $\mathcal{F}$ if and only if they are neighbours in $\frac{1}{n}\mathcal{F}$. This implies that for any two neighbours $\frac{a}{nc}$ and $\frac{b}{d}$ in both $\mathcal{F}$ and $\frac{1}{n}\mathcal{F}$, there is an element of the form $\begin{psmallmatrix} a & \pm{b}\\ nc & \pm{d} \end{psmallmatrix}\in\Gamma_0(n)\subset{SL_2}(\mathbb{Z})$ which maps $\infty$ to $\frac{a}{nc}$ and $0$ to $\frac{b}{d}$. The reverse is also true, any element of $\Gamma_0(n)$ maps the vertices $0$ and $\infty$, to a pair of neighbours in both $\mathcal{F}$ and $\frac{1}{n}\mathcal{F}$.

For any point of the form $A=\frac{a}{nc}$ and any two consecutive neighbours $B_1=\frac{b_1}{d_1}$ and $B_2=\frac{b_2}{d_2}$ of $A$ in $\mathcal{F}\cap\frac{1}{n}\mathcal{F}$, we can always find a map  $\phi\in\Gamma_0(n)$, such that $\phi(0)=A$, $\phi(\infty)=B_1$ and $\phi(\frac{1}{n})=B_2$ (up to relabelling). Since $\Gamma_0(n)$ preserves both $\mathcal{F}$ and $\frac{1}{n}\mathcal{F}$, the number of neighbours that $A$ has between $B_1$ and $B_2$ in $\mathcal{F}$ (or equivalently in $\frac{1}{n}\mathcal{F}$) will be equivalent to the number of neighbours that $0$ has between $\infty$ and $\frac{1}{n}$ in $\mathcal{F}$ (or in $\frac{1}{n}\mathcal{F}$). Similarly, for any point of the form $B=\frac{b}{d}$ ($gcd(n,d)=1$) and any two consecutive neighbours $A_1=\frac{a_1}{nc_1}$ and $A_2=\frac{a_2}{nc_2}$ of $B$ in $\mathcal{F}\cap\frac{1}{n}\mathcal{F}$, the number of neighbours that $B$ has between $A_1$ and $A_2$ in $\mathcal{F}$ (or in $\frac{1}{n}\mathcal{F}$) will be equivalent to the number of neighbours that $\infty$ has between $0$ and $1$ in $\mathcal{F}$ (or in $\frac{1}{n}\mathcal{F}$). We summarise this in the following table:

\begin{center}
\begin{tabular}{ | M{3cm} | M{4.5cm}| M{4.5cm} | } 
 \hline
 Points of the form & Number of neighbours in $\mathcal{F}$ between consecutive neighbours in $\mathcal{F}\cap{\frac{1}{n}}\mathcal{F}$ & Number of neighbours in $\frac{1}{n}\mathcal{F}$ between consecutive neighbours in $\mathcal{F}\cap{\frac{1}{n}}\mathcal{F}$ \\ 
 \hline
 & & \\
$\frac{a}{nc}$ & $0$ & $n-1$ \\ 
& & \\
$\,\,\,\,$ $\frac{b}{d}$, $\,\,$ \footnotesize{$gcd(n,d)=1$} & $n-1$ & $0$ \\ 
& & \\
  \hline

\end{tabular}
\end{center}

This information is used to prove the following result.

\begin{pro}\label{pro2} If a continued fraction $\overline\alpha$ has a convergent denominator $q_k$, such that $n\mid{q_k}$ for $n\in\mathbb{N}$ and $n<q_k$, then $B(n\alpha)\geq{n}$. Further, if $\frac{p_k}{q_k} =\frac{p_k}{nq'_k}$ is a convergent of $\overline\alpha$, $\frac{p_k}{q'_k}$ is a convergent of $\overline{n\alpha}$.
\end{pro}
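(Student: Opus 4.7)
The plan is to realise both claims geometrically via the cutting sequence framework of Sections \ref{CFCS} and \ref{TFC}. Let $\zeta_\alpha$ be a geodesic ray starting at the $y$-axis $I$ and terminating at $\alpha$. By Theorem \ref{thma}, $(\zeta_\alpha, \mathcal{F})$ realises $\overline{\alpha}$ under $\eta$, and applying the isomorphism $n^*$ gives $(\zeta_\alpha, \frac{1}{n}\mathcal{F}) = (n^*(\zeta_\alpha), \mathcal{F}) = (\zeta_{n\alpha}, \mathcal{F})$, which realises $\overline{n\alpha}$. Moreover, by the discussion of convergents following Theorem \ref{thma}, every convergent of either continued fraction appears as the common vertex of a fan in the corresponding cutting sequence.

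First I would identify $\frac{p_k}{q_k}$ as the common vertex $v$ of some fan $F$ in $(\zeta_\alpha, \mathcal{F})$, with $j \geq 1$ triangles. The geodesic crosses $j+1$ consecutive $\mathcal{F}$-edges at $v$, namely the edges from $v$ to cyclically consecutive $\mathcal{F}$-neighbours $u_0, \ldots, u_j$ of $v$. Writing $v = \frac{p_k}{nq'_k}$ in the form $\frac{a}{nc}$, the table following Lemma \ref{lem2} tells us that every $\mathcal{F}$-neighbour of $v$ is also a $\frac{1}{n}\mathcal{F}$-neighbour, and that between any two consecutive common neighbours sit exactly $n-1$ additional neighbours of $v$ in $\frac{1}{n}\mathcal{F}$. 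The key step is then to argue that $\zeta_\alpha$, which enters the angular sector at $v$ across the edge $[v, u_0]$ and leaves across $[v, u_j]$ without ever meeting $v$, must cross every $\frac{1}{n}\mathcal{F}$-edge at $v$ lying in this sector, since each such edge cuts the sector into two angular pieces. Consequently the fan at $v$ in $(\zeta_\alpha, \frac{1}{n}\mathcal{F})$ involves $j+1 + j(n-1) = jn+1$ edges at $v$, and hence has at least $jn \geq n$ triangles.

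For the second assertion I would transport this fan by $n^*$: since $n^*$ preserves cutting sequences and sends $v = \frac{p_k}{q_k}$ to $n \cdot \frac{p_k}{q_k} = \frac{p_k}{q'_k}$, the resulting fan in $(\zeta_{n\alpha}, \mathcal{F})$ has common vertex $\frac{p_k}{q'_k}$, which is therefore a convergent of $\overline{n\alpha}$. The size of this fan is the corresponding partial quotient of $\overline{n\alpha}$; since $v \neq \infty$ (because $q_k > 0$) it is not the leading term $a_0$ of $\overline{n\alpha}$, yielding $B(n\alpha) \geq n$. The main technical obstacle I anticipate is making the \emph{single fan} claim of the previous paragraph fully rigorous, namely ruling out that the extra $\frac{1}{n}\mathcal{F}$-crossings at $v$ break into several fans separated by triangles not incident to $v$; this should follow because any two cyclically consecutive $\frac{1}{n}\mathcal{F}$-edges at $v$ bound a triangle of $\frac{1}{n}\mathcal{F}$ also incident to $v$, so the consecutive crossings at $v$ do assemble into a single fan in $(\zeta_\alpha, \frac{1}{n}\mathcal{F})$.
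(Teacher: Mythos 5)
Your proposal is correct and follows essentially the same geometric argument as the paper: realise $\frac{p_k}{q_k}$ as the common vertex of a fan in $(\zeta_\alpha,\mathcal{F})$, invoke Lemma~\ref{lem2} and the table following it to note that the $\mathcal{F}$-edges at this vertex are also $\frac{1}{n}\mathcal{F}$-edges with $n-1$ further $\frac{1}{n}\mathcal{F}$-edges interleaved between each consecutive pair, conclude that the corresponding fan in $(\zeta_\alpha,\frac{1}{n}\mathcal{F})$ has at least $n$ triangles, and transport by $n^*$ to identify $\frac{p_k}{q'_k}$ as a convergent of $\overline{n\alpha}$ and to rule out it being the $a_0$-term. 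Two small differences from the paper's exposition, both favourable to you: you directly obtain the sharper bound $jn$ (which the paper relegates to the subsequent corollary), and you explicitly flag and address the ``single fan'' issue --- that the extra $\frac{1}{n}\mathcal{F}$-crossings at $v$ cannot be interrupted by triangles not incident to $v$ --- whereas the paper asserts this without comment. For the final $B(n\alpha)\geq n$ step the paper argues via $q'_k>1$ (using the hypothesis $n<q_k$) that $\frac{p_k}{q'_k}$ is not the first fan's vertex, while you argue via $\frac{p_k}{q'_k}\neq\infty$; both are valid ways to exclude the $a_0$-fan (whose common vertex is $\infty$).
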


\begin{proof} Let $A=\frac{p_k}{q_k}$ be a convergent of $\overline\alpha$ with geodesic representative $\zeta_\alpha$ in $\mathbb{H}$, such that $n\mid{q_k}$ and $n\in\mathbb{N}$. Then $A$ is a common vertex of a fan in the cutting sequence of $\zeta_\alpha$ with $\mathcal{F}$. and so, at least two edges of the cutting sequence have $A$ as an endpoint. Let $B=\frac{r}{s}$ and $C=\frac{t}{u}$ be the other two endpoints of two such edges, with $\bigtriangleup{ABC}\in\mathcal{F}$. Since $A$ is a neighbour of both $B$ and $C$ in $\mathcal{F}$, $gcd(q_k,s)=gcd(n,s)=gcd(q_k,u)=gcd(n,u)=1$. From Lemma \ref{lem2}, the edges $AB$ and $AC$ are in $\mathcal{F}\cap\frac{1}{n}\mathcal{F}$. By the above paragraph, there exists a map in $\Gamma_0(n)$ which takes $\infty$ to $A$, $0$ to $B$ and $1$ to $C$ (up to relabelling $B$ and $C$). Since $AB$ and $AC$ are both in $\mathcal{F}\cap\frac{1}{n}\mathcal{F}$, there are $n-1$ edges between $AB$ and $AC$ in $\frac{1}{n}\mathcal{F}$, all of which $\zeta_\alpha$ passes through. All these edges share $A$ as an endpoint and so, they are all edges in the same fan. It follows from this, that the fan $\zeta_\alpha$ forms with $\frac{1}{n}\mathcal{F}$ containing both $AB$ and $AC$, contains at least $n$ triangles. Therefore, the cutting sequence $(\zeta_\alpha,\frac{1}{n}\mathcal{F})$ contains a partial quotient with value at least $n$. 

Since $n<q_k$, there exists a $q'_k>1$ such that $q_k=nq'_k$. By the above argument $A=\frac{p_k}{q_k}$ is a common vertex of a fan in the cutting sequence $(\zeta_\alpha,\frac{1}{n}\mathcal{F})$. When we rescale using the $n$-scaling map, $\frac{p_k}{q_k}$ in $\frac{1}{n}\mathcal{F}$ maps to $\frac{p_k}{q'_k}$ in $\mathcal{F}$, which is a common vertex in the cutting sequence $(n^*(\zeta_\alpha),\mathcal{F})$. Therefore, $\frac{p_k}{q'_k}$ is a convergent of $\overline{n\alpha}$. Since $q'_k>1$, it follows that $\frac{p_k}{q'_k}$ is not the common vertex of the first fan but necessarily of some fan after. As a result, the partial quotient of $\overline{n\alpha}$ with value at least $n$ is not the first partial quotient.  By definition $B(n\alpha)\geq{a^{(n)}_i}$ for all $i\in\mathbb{N}$ and since there exists an $a^{(n)}_i\geq{n}$, it follows that $B(n\alpha)\geq{n}$.
\end{proof}

We can improve on this result by taking all such triangles in the fan with the common vertex $\frac{p_k}{q_k}$ and subdividing each of these $n$ times, when taking $\frac{1}{n}\mathcal{F}$. There are $a_k$ such triangles, where $a_k$ is the $k$-th partial quotient. Note that there may be extra terms in this fan (added either side). As a result, we get the following corollary.

\begin{cor} If a continued fraction $\overline\alpha$ has a convergent denominator $q_k$, such that $n\mid{q_k}$ for $n\in\mathbb{N}$ and $n<q_k$, then $B(n\alpha)\geq{n{a_k}}$.
\end{cor}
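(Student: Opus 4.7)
The plan is to strengthen the argument of Proposition~\ref{pro2} by accounting for every edge of $\mathcal{F}$ incident to the convergent $A=\frac{p_k}{q_k}$ that $\zeta_\alpha$ crosses, not merely the two edges $AB$ and $AC$ selected there. By the identification of convergents with common vertices of fans recalled at the end of Section~\ref{FC}, $A$ is the common vertex of a fan of length $a_k$ in $(\zeta_\alpha,\mathcal{F})$, so the geodesic crosses exactly $a_k+1$ consecutive edges of $\mathcal{F}$ incident to $A$. Labelling their opposite endpoints $u_0,u_1,\dots,u_{a_k}$ in order, the $a_k$ triangles of the fan are precisely $\triangle A u_{i-1}u_i$ for $i=1,\dots,a_k$.

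Because $n\mid q_k$, Lemma~\ref{lem2} applies to every pair $(A,u_i)$ (with $n_1=n$ and $n_2=1$), so each edge $Au_i$ is simultaneously an edge of $\frac{1}{n}\mathcal{F}$. The table preceding Proposition~\ref{pro2} then yields $n-1$ additional neighbours of $A$ in $\frac{1}{n}\mathcal{F}$ strictly between each consecutive pair $u_{i-1},u_i$ in the cyclic order around $A$; geometrically these correspond to $n-1$ extra edges from $A$ that are chords of $\triangle A u_{i-1}u_i$ landing on the opposite side $u_{i-1}u_i$. Any sub-arc of $\zeta_\alpha$ that enters $\triangle A u_{i-1}u_i$ through $Au_{i-1}$ and exits through $Au_i$ is therefore forced to cross all $n-1$ of these new edges. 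Summing, $\zeta_\alpha$ crosses at least $(a_k+1)+a_k(n-1)=na_k+1$ edges of $\frac{1}{n}\mathcal{F}$ incident to $A$, so $(\zeta_\alpha,\frac{1}{n}\mathcal{F})$ contains a fan at $A$ of length at least $na_k$. Rescaling by $n^*$ exactly as in Proposition~\ref{pro2}, this fan maps to a fan at $\frac{p_k}{q_k'}$ (where $q_k=nq_k'$) in the cutting sequence $(n^*(\zeta_\alpha),\mathcal{F})$ of $\overline{n\alpha}$; the hypothesis $n<q_k$ gives $q_k'>1$, so this fan is not the initial one and its length contributes to $B(n\alpha)$, yielding $B(n\alpha)\geq na_k$.

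The delicate point is the geometric claim that $\zeta_\alpha$ really meets every one of the $n-1$ new chords inside each original triangle. This is where I would spend most care: it requires verifying that the new neighbours of $A$ supplied by the table genuinely lie in the boundary arc of $\partial\mathbb{H}$ between $u_{i-1}$ and $u_i$ opposite to $A$ (so that each chord $Av$ is a transversal of the triangle), and ruling out the degenerate case that $\zeta_\alpha$ passes through the vertex $A$ itself, which is automatic for $\alpha\in\mathbb{R}\setminus\mathbb{Q}$ and reduces to a finite check at the terminal convergent for rational $\alpha$. Any additional triangles incident to $A$ picked up from subdivided neighbouring fans can only lengthen the fan further, so the inequality is preserved.
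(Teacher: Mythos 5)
Your argument is essentially the paper's: the paper's justification for this corollary is the short remark preceding it, which says to take the $a_k$ triangles in the fan at $\tfrac{p_k}{q_k}$ and subdivide each into $n$ triangles of $\tfrac{1}{n}\mathcal{F}$, giving a fan of length at least $na_k$. You have simply filled in the details the remark leaves implicit: using Lemma~\ref{lem2} to see that every edge $Au_i$ survives into $\tfrac{1}{n}\mathcal{F}$, the table to locate the $n-1$ interpolated chords, and a separation argument to force $\zeta_\alpha$ across all of them. The geometric step you single out as delicate is indeed the crux, and your justification (the new neighbours of $A$ in $\tfrac{1}{n}\mathcal{F}$ lie on the boundary arc between $u_{i-1}$ and $u_i$ opposite $A$, so the corresponding chords separate the wedge) is sound.

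One caveat, which you inherit verbatim from the paper rather than introduce: the identification ``$A=\tfrac{p_k}{q_k}$ is the common vertex of a fan of length $a_k$'' has its index shifted. The fan of length $a_k$ in $(\zeta_\alpha,\mathcal{F})$ is obtained by iterating Farey addition $a_k$ times from the edge $\bigl\{\tfrac{p_{k-2}}{q_{k-2}},\tfrac{p_{k-1}}{q_{k-1}}\bigr\}$, pivoting about $\tfrac{p_{k-1}}{q_{k-1}}$ and ending at the edge $\bigl\{\tfrac{p_{k-1}}{q_{k-1}},\tfrac{p_k}{q_k}\bigr\}$; so its common vertex is $\tfrac{p_{k-1}}{q_{k-1}}$, and the fan pivoting at $\tfrac{p_k}{q_k}$ actually has length $a_{k+1}$. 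Your edge count therefore yields $B(n\alpha)\geq n a_{k+1}$ rather than $n a_k$. This is a slip in the paper's remark (and in the statement of the corollary), not a gap you created, and the rest of your reasoning is independent of it.
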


\subsubsection{Fundamental domains of $\Gamma_0(n)$}\label{RSK}
All results and constructions in this sub-section are contained in \cite{AGM}. 

Fundamental domains of $\Gamma_0(n)$ have been well studied with relation to modular forms. Notably, R.S. Kulkarni gives an explicit construction of a fundamental domain (with side pairings) using \textit{Farey symbols} in \cite{AGM}. This is the construction which we will use and as such we recall important results for ease. 

A \textit{Farey Sequence} is a sequence of vertices in $\mathcal{F}$, $\{\infty,x_0,\ldots,x_r,\infty\}$, such that each consecutive pair of vertices $x_i$ and $x_{i+1}$ are neighbours in $\mathcal{F}$ and there is some $i\in\{0,\ldots,r\}$ with $x_i=0$. 
Given a Farey sequence, we construct a \textit{Farey symbol} $\sigma$ by identifying each pair of consecutive vertices $x_i$, $x_{i+1}$ with one of the following intervals:

\begin{enumerate}
\item A \textit{free interval} with label $a$ such that there is another pair of consecutive vertices $x_j$, $x_{j+1}$, which form a free interval and have the same label,
\end{enumerate}
\begin{center}
$\fs{x_i}{a}{x_{i+1}}$ $\quad$ and \, $\fs{\hphantom{_{+1}}x_j}{a}{x_{j+1}}$
\end{center}

\begin{enumerate}[resume]
\item An \textit{even interval},
\end{enumerate}
\begin{center}
$\,\,\fs{x_i}{\circ}{x_{i+1}}$
\end{center}

\begin{enumerate}[resume]
\item An \textit{odd interval}.
\end{enumerate}
\begin{center}
$\,\,\fs{x_i}{\bullet}{x_{i+1}}$
\end{center}

An example of a Farey symbol is:
 $$\bigg\{{\sfssss{\infty }{1}{\frac{0}{1}}{\bullet}{\frac{1}{2}}{\bullet}{\frac{1}{1}}{1}{\infty}} \bigg\} $$
 \begin{remark} This Farey symbol corresponds to a fundamental domain for $\Gamma_0(7)$. 
 \end{remark}

For each Farey symbol $\sigma$, we can then construct a \textit{special polygon} $P_\sigma$ with edge identifications induced by the interval type, as seen below. We will see shortly that every special polygon is a fundamental domain for some finite index subgroup of $PSL_2(\mathbb{Z})$ and every finite index subgroup of $PSL_2(\mathbb{Z})$ admits a special polygon as a fundamental domain. We construct $P_\sigma$ as follows:

\begin{enumerate}
\item Given two free intervals $\sfs{x_i}{a}{x_{i+1}}$  and  $\sfs{x_j}{a}{x_{j+1}}$,  we construct a geodesic edge between $x_i$ and $x_{i+1}$ and another between $x_j$ and $x_{j+1}$. These edges are identified by the map taking the vertex $x_i$ to $x_{j+1}$ and $x_{i+1}$ to $x_j$. We refer to such pairs of edges as \textit{free sides} of $P_\sigma$. 

\item Given an even interval $\sfs{x_i}{\circ}{x_{i+1}}$, we construct a geodesic edge between $x_i$ and $x_{i+1}$. The edge maps to itself by mapping $x_i$ to $x_{i+1}$ and vice versa (by the elliptic involution about the midpoint of the edge between $x_i$ and $x_{i+1}$). We refer to such an edge as an \textit{even side} of $P_\sigma$. 

\item Given an odd interval $\sfs{x_i}{\bullet}{ x_{i+1}}$, we take the unique triangle between $x_i$, $x_{i+1}$ and $x_i\oplus{x_{i+1}}$. We take $y_i$ to be the centre of this triangle and construct the geodesic edges between $x_i$ and $y_i$ and between $y_i$ and $x_{i+1}$. See Fig.~\ref{p7} and \ref{p72}. These edges are identified by the map which preserves $y_i$ and maps $x_i$ to $x_{i+1}$, $x_{i+1}$ to $x_i\oplus{x_{i+1}}$ and $x_i\oplus{x_{i+1}}$ to $x_i$. We refer to such  edges as an \textit{odd sides} of $P_\sigma$. 
\end{enumerate}

%

%

To construct our side pairings we note that if we have a pair of vertices in $\mathcal{F}$, $x_i=\frac{a_i}{b_i}$ and $x_{i+1}=\frac{a_{i+1}}{b_{i+1}}$ and we want a map in $SL_2(\mathbb{Z})$ taking $x_i$ to $x_{j+1}=\frac{a_{j+1}}{b_{j+1}}$ and $x_{i+1}$ to $x_j=\frac{a_j}{b_j}$, then this map will be of the form:

 \begin{equation}\label{eq1}
 \phi:=\begin{pmatrix} a_{j}b_{i}+a_{j+1}b_{i+1} & -a_ia_j-a_{i+1}a_{j+1} \\ b_ib_j+b_{i+1}b_{j+1} & -a_{i}b_{j}-a_{i+1}b_{j+1} \end{pmatrix}
 \end{equation}

For any Farey symbol $\sigma$, there is a collection of maps (defined above) corresponding to these edge identifications. We define $\Phi_\sigma$ to be group generated by the edge identifications of $\sigma$. By the Poincar\'e Polyhedron Theroem, $P_\sigma$ is a fundamental domain for $\Phi_\sigma$. Theorem (6.1) in \cite{AGM} states that the edge identifications for any Farey symbol $\sigma$, form an independent set of generators for $\Phi_\sigma$. Moreover, the following Theorem explains the importance of special polygons.

\begin{theorem}[Theorem (3.2) and (3.3), R.S. Kulkarni, \cite{AGM}] Every special polygon is a fundamental domain for a finite index subgroup of $PSL_2(\mathbb{Z})$, which is generated by the side pairings. Every finite index subgroup of $PSL_2(\mathbb{Z})$ admits a special polygon as a fundamental domain.
\end{theorem}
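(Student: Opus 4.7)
The plan is to prove the two statements separately, using the Poincar\'e Polyhedron Theorem for the first and an inductive coset-expansion argument for the second.

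For the forward direction, given a Farey symbol $\sigma$ and its associated special polygon $P_\sigma$, I would verify that the side pairings satisfy the hypotheses of the Poincar\'e Polyhedron Theorem for $\mathbb{H}$. First, each side pairing is an orientation-preserving hyperbolic isometry: for free sides this follows directly from formula~(\ref{eq1}), which has integer entries and determinant $1$ by a short calculation using $|a_ib_{i+1}-a_{i+1}b_i|=1=|a_jb_{j+1}-a_{j+1}b_j|$ (neighbors in $\mathcal{F}$); for even sides the pairing is the order-$2$ elliptic rotation about the midpoint of the edge; for odd sides it is the order-$3$ elliptic rotation about the interior vertex $y_i$. The critical step is the verification of the cycle conditions at every vertex of $P_\sigma$. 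At the ideal vertices (cusps in $\partial\mathbb{H}$) the side-pairing transformations compose around each cycle to a parabolic element, which follows because consecutive Farey neighbors bound triangles in $\mathcal{F}$ whose vertex cycles are governed by Farey addition. At the interior vertices $y_i$ coming from odd intervals, the cycle relation $(\text{odd pairing})^3=1$ holds by construction, and at the midpoint of an even side the relation $(\text{even pairing})^2=1$ holds similarly. Once these conditions are checked, the Poincar\'e Polyhedron Theorem concludes that $P_\sigma$ is a fundamental domain for $\Phi_\sigma$, that $\Phi_\sigma$ has the side pairings as an independent set of generators, and that finite hyperbolic area of $P_\sigma$ (bounded by finitely many geodesic arcs between cusps) forces $\Phi_\sigma$ to have finite index in $PSL_2(\mathbb{Z})$.

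For the converse, given any finite index subgroup $\Gamma < PSL_2(\mathbb{Z})$ of index $N$, I would construct a Farey symbol whose special polygon is a fundamental domain for $\Gamma$. Starting from the standard ideal triangle $T$ of $PSL_2(\mathbb{Z})$ with vertices $\infty,0,1$, I would choose coset representatives $g_1,\dots,g_N$ and form the union $D=\bigcup_{i=1}^N g_i(T)$, choosing representatives greedily so that $D$ is connected and simply connected. The boundary $\partial D$ then consists of geodesic arcs between Farey neighbors, which read off a Farey sequence $\{\infty,x_0,\ldots,x_r,\infty\}$. Each boundary arc is either paired with another boundary arc by a nontrivial element of $\Gamma$ (giving a free interval), fixed setwise by an involution in $\Gamma$ (giving an even interval), or lies on the boundary of an ideal triangle whose opposite vertex is stabilised by an order-$3$ element of $\Gamma$ (giving an odd interval, after inserting the barycenter $y_i$ and subdividing the arc into two edges meeting at $y_i$). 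The resulting data is a Farey symbol $\sigma$, and by construction $P_\sigma=D$ is a fundamental domain for $\Gamma$.

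The main obstacle is the converse direction, specifically in arranging that every boundary identification of $D$ falls into exactly one of the three prescribed types and that the subdivision producing odd intervals produces a polygon that is still geodesically bounded in the Farey complex. This requires a careful choice of coset representatives: one must inductively add triangles of $\mathcal{F}$ to the domain so that each new boundary arc either gets identified with an existing boundary arc via an element of $\Gamma$ or becomes internal at the next stage, and so that torsion in $\Gamma$ is absorbed into even and odd intervals rather than left as free identifications. Handling the elliptic case is where the construction can fail if representatives are chosen carelessly, but Kulkarni's greedy algorithm traversing $\mathcal{F}$ from a base triangle outward, exploiting the transitive action of $PSL_2(\mathbb{Z})$ on oriented edges of $\mathcal{F}$, resolves this and produces the required Farey symbol.
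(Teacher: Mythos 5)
The statement you are asked to prove is quoted from Kulkarni's paper and is not proved in this paper; the author explicitly says that all results in the surrounding subsection are taken from \cite{AGM}. So there is no internal proof to compare against, only Kulkarni's. With that understood:

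Your forward direction is in the spirit of Kulkarni's argument. Verifying that the side pairings are orientation-preserving isometries, checking the elliptic cycle relations at even midpoints and odd barycenters, and invoking the Poincar\'e polyhedron theorem (with a finite-area observation to get finite index) is essentially how Kulkarni proceeds, and the paper itself alludes to this.

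The converse direction has a genuine gap at the very first step. You propose to set $D=\bigcup_{i=1}^N g_i(T)$ where $T$ is the ideal triangle with vertices $\infty,0,1$ and $g_1,\dots,g_N$ are coset representatives for a subgroup $\Gamma$ of index $N$. But $T$ is \emph{not} the fundamental domain of $PSL_2(\mathbb{Z})$; it is the union of three copies of it, since the $PSL_2(\mathbb{Z})$-stabilizer of $T$ is the cyclic group of order $3$ rotating about the barycenter. So the region $D$ you describe has hyperbolic area $N\pi$, whereas a fundamental domain for $\Gamma$ must have area $N\pi/3$. Indeed, when $3\nmid N$ a fundamental domain for $\Gamma$ cannot be a union of full ideal triangles at all: this is exactly what the odd intervals of a Farey symbol record, contributing pieces that are one third of an ideal triangle adjacent to the odd side (for example, the special polygon for $\Gamma_0(7)$ in the paper has area $\pi+\pi+\pi/3+\pi/3=8\pi/3$, matching index $8$, precisely because each odd interval contributes one third of a triangle). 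Your ``main obstacle'' paragraph gestures at the difficulty of absorbing torsion, but it treats this as a matter of choosing representatives carefully rather than as an obstruction to the shape of the seed domain. The fix is to build $D$ from $N$ copies of the genuine $PSL_2(\mathbb{Z})$ fundamental domain (one third of an ideal triangle) assembled along the Farey tessellation, letting order-$3$ stabilizers manifest as odd intervals and order-$2$ stabilizers as even intervals; this is what Kulkarni's tree/coset construction actually does.
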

Assuming we have a Farey symbol $\sigma$ with $\Phi_\sigma=\Gamma_0(n)$ for some $n$ an positive integer, every edge identification $\phi$ (as in (\ref{eq1})) must satisfy $b_ib_j+b_{i+1}b_{j+1}\equiv{0}$ mod $n$. Therefore, for two pairs of neighbours in $\mathcal{F}$, $x_i$ and $x_{i+1}$, and $x_j$ and $x_{j+1}$, there is a transformation in $\Gamma_0(n)$ which maps the edge between $x_i$ and $x_{i+1}$, and $x_j$ and $x_{j+1}$ if and only if $b_i b_{j}+b_{j+1}b_{i+1}\equiv 0$ mod $n$, where $b_k$ is the denominator of $x_k$ in reduced form. In the case that $b_i\neq{b_{j+1}}$ and $b_{i+1}\neq{b_{j}}$, these edges form a pair of free sides. For an odd edge $b_j=b_i+b_{i+1}$ and $b_{j+1}=b_i$ and for an even edge $b_j=b_i$ and $b_{j+1}=b_{i+1}$. Therefore, two neighbours in $\mathcal{F}$, $x_i$ and $x_{i+1}$,  form an odd edge if and only if  $b_i^2+ b_{i}b_{i+1}+b^2_{i+1}\equiv 0$ mod $n$. Similarly, two neighbours in $\mathcal{F}$, $x_i$ and $x_{i+1}$, form an even edge if and only if  $b_i^2+b^2_{i+1}\equiv 0$ mod $n$. 

For any $\Gamma_0(n)$, we can choose a special polygon $P_\sigma$ (as fundamental domain) such that the $y$-axis $I$ and $I+1$ are paired sides of $P_\sigma$. In particular, we can take $x_0=0$ and $x_r=1$ in the corresponding Farey sequence. This is due to the fact that $\begin{psmallmatrix} 1 & 1\\ 0 & 1 \end{psmallmatrix}\in\Gamma_0(n)$, for all $n\in\mathbb{N}_{>1}$.

For $p$ prime, we can find a Farey symbol $\sigma$ (for which $P_\sigma$ is a fundamental domain of $\Gamma_0(p)$), in which the vertices are symmetric in the line $x=\frac{1}{2}$ to $\infty$. In other words, the underlying Farey sequence will be of the form $\{\infty,0,x_1,x_2,\ldots{x_2'},x_1',1,\infty\}$, where $x'_i=1-x_i$.
The term $\frac{1}{2}$ will be in every Farey symbol of $\Gamma_0(p)$ for $p\geq{5}$.  This is due to the fact that the line $0$ to $\frac{1}{2}$ separates $\mathbb{H}$ into two regions: one containing the vertex $1$ and the other containing all other neighbours of $0$. Therefore, to get a Farey symbol containing the terms $0$ and $1$, the underlying Farey sequence must either only contain the vertices $\infty$, $0$ and $1$ or  the sequence must contain the vertex $\frac{1}{2}$. If we have either an odd or even interval, the interval identifications are symmetric in the line $\frac{1}{2}$ to $\infty$. However, for free intervals we have antisymmetry, i.e. the free interval labelled $a$ will be replaced with the label $a'$ in this symmetry. Due to the symmetry of the vertices in the line $x=\frac{1}{2}$ to $\infty$ and the pseudo-symmetry of the interval identifications, we will shorten the sequence up to the term $\frac{1}{2}$ (for $p\geq{5}$, since for $p=2,3$ we only use the vertices $\infty$, $0$ and $1$). Similarly, due to identification of the lines $x=0 $ to $\infty$ and $x=1 $ to $\infty$, we will not include these terms in our sequence, with this identification being implicit. For example:
$$\big\{\sfssss{0}{1}{x_1}{1'}{x_2}{\bullet}{x_3}{\circ}{\frac{1}{2}}\mid\text{refl.}\big\}=\big\{\sfssss{0}{1}{x_1}{1'}{x_2}{\bullet}{x_3}{\circ}{\frac{1}{2}}\hspace{-4.6mm}\sfssss{\phantom{\frac{1}{2}}}{\circ}{x'_3}{\bullet}{x'_2}{1}{x'_1}{1'}{1}\big\}$$
We can explicitly state how many odd, even and free intervals there will be in each fundamental domain of $\Gamma_0(n)$. This can be derived from the properties of the quotient space $\mfaktor{\Gamma_0(n)}{\mathbb{H}}$. For a prime $p\geq{5}$ with $p\equiv{1}$ mod $3$  there are exactly two odd intervals (either side of $\frac{1}{2}$), otherwise there are no odd intervals. Similarly, if $p\equiv{1}$ mod $4$ there are exactly two even intervals (either side of $\frac{1}{2}$), otherwise there are no even intervals. If $p\equiv{1}$ mod $3$, the Farey symbol has $\frac{p+2}{3}$ terms, otherwise the Farey symbol has $\frac{p+4}{3}$ terms. 

Given $\Phi$ a subgroup of $PSL_2(\mathbb{Z})$, we can use the Riemann-Hurwitz formula to relate some geometric invariants the quotient space $\mfaktor{\Phi}{\mathbb{H}}$, as follows:
$$ d=3e_2+4e_3+12g+6t-12 $$
\noindent
where:
\begin{itemize}
\item $d$ is the index of $[PSL_2(\mathbb{Z}):\Phi]$
\item $e_2$ is the number orbifold points in $\mfaktor{\Phi}{\mathbb{H}}$ with cone angle $\pi$ (or equivalently the number of even intervals in a corresponding special polygon)
\item $e_3$ is the number orbifold points in $\mfaktor{\Phi}{\mathbb{H}}$ with cone angle $\frac{2\pi}{3}$ (or equivalently the number of odd intervals in a corresponding special polygon)
\item $g$ is the genus of $\mfaktor{\Phi}{\mathbb{H}}$ 
\item $t$ is the number of cusps for $\mfaktor{\Phi}{\mathbb{H}}$
\end{itemize} 

\noindent
For $\Phi=\Gamma_0(n)$:
$$d = n {\displaystyle \prod_{q\mid{n}}}\Big(1+\frac{1}{q}\Big),$$
$$ t = {\displaystyle \sum_{a\mid{n}}}\phi\Big(gcd\Big(a,\frac{n}{a}\Big) \Big),$$
where $q$ is a prime number, $a\in\mathbb{N}$ and $\phi$ is the Euler totient function. 

Calculating this information for $\Gamma_0(p)$, we observe that the quotient space $\mfaktor{\Gamma_0(p)}{\mathbb{H}}$ will have $2$ punctures, $e_2$ even intervals, $e_3$ odd intervals and genus $g$. The above relation then reduces to:
$$p+1=3e_2+4e_3+12g $$
\subsubsection{Decorated tiles of $\Gamma_0(n)$}

\begin{defn} We define $P_n$ to be a special polygon, with the $y$-axis $I$ and $I+1$ as paired sides, which is a fundamental domain for $\Gamma_0(n)$. 
\end{defn}

If we take $P_n$ to be a fundamental domain for $\Gamma_0(n)$, we can construct  $T_{\{1,n\}}:=P_n\cup(P_n\cap\mathcal{F})$ and $T_{\{n,n\}}:=P_n\cup(P_n\cap{\frac{1}{n}\mathcal{F})}$ to be two decorated tiles of $\Gamma_0(n)$ such that $\Gamma_0(n)\cdot{T_{\{1,n\}}}=\mathcal{F}$ and $\Gamma_0(n)\cdot{T_{\{n,n\}}}=\frac{1}{n}\mathcal{F}$. See Fig.~\ref{Tn} for images of $T_{\{1,n\}}$ and $T_{\{n,n\}}$ for $n=7,11$.
We can similarly define $T_{\{d,n\}}:=P_n\cup(P_n\cap\{\frac{1}{d}\mathcal{F}))$ for every $d\mid{n}$ and the decorated tile $T_{\{d,n\}}$ together with the side pairings induced by $\Gamma_0(n)$ encodes sufficient data to recover $\frac{1}{d}\mathcal{F}$, for all $d\mid{n}$.

For any geodesic ray $\zeta_\alpha$, we can decompose $\zeta_\alpha$ into an ordered collection of sub-paths $\bigcup^{\infty}_{i=1}\zeta_{i,\alpha}$, such that each $\zeta_{i,\alpha}$ is entirely contained in an image of ${P_n}$ under its tessellation by $\Gamma_0(n)$. We will abuse notation and think of each $\zeta_{i,\alpha}$ as a sub-path in $P_n$. Then the cutting sequence of $\zeta_\alpha$ with $\frac{1}{d}\mathcal{F}$ for $d\mid{n}$, is equivalent to ordered product of the cutting sequences for each $\zeta_{i,\alpha}$ with $T_{\{d,n\}}$. 
Explicitly, $\prod^{\infty}_{i=1}(\zeta_{i,\alpha},T_{\{d,n\}})=(\zeta_{1,\alpha},T_{\{d,n\}})\cdot(\zeta_{2,\alpha},T_{\{d,n\}})\cdot\ldots=(\zeta_\alpha,\mathcal{F})$, where $(\zeta_{1,\alpha},T_{\{d,n\}})\cdot(\zeta_{2,\alpha},T_{\{d,n\}})=\{L^{n_0}R^{n_1}\cdots\}\cdot\{L^{n'_0}R^{n'_1}\cdots\}=\{L^{n_0}R^{n_1}\cdots{L^{n'_0}R^{n'_1}\cdots\}} $. 

\begin{remark} On a technical note, we take $(\zeta_{i,\alpha},T_{\{d,n\}})$ to be the generalised cutting sequence with the canonically induced labelled triangulation on $\mathcal{F}$. This is simply so that we do not need to form a "full" left or right triangle in $T_{\{d,n\}}$, and so the first term in $(\zeta_{i,\alpha}, T_{\{d,n\}})$  and the last term in $(\zeta_{i-1,\alpha}, T_{\{d,n\}})$ are both well-defined and without error.
\end{remark}


\subsubsection*{Algorithm for Integer Multiplication of a Continued Fraction by $n$.} 
By using the above notions of sub-path, we obtain the following algorithm for multiplying a continued fraction $\overline{\alpha}$ by $n$ some integer:
\begin{enumerate}
\item Construct the fundamental domain $P_n$ of $\Gamma_0(n)$.
\item Construct the decorated tiles of $T_{\{1,n\}}$ and $T_{\{n,n\}}$.
\item Use the continued fraction $\overline{\alpha}$ to algorithmically recover a curve $\zeta_\alpha$ as a sequence of sub-paths $\bigcup\limits^{\infty}_{i=1}\zeta_{i,\alpha}$ intersecting $T_{\{1,n\}}$.
\item Take the cutting sequence of each $\zeta_{i,\alpha}$ with respect to $T_{\{n,n\}}$.
\item Compute $\prod\limits^{\infty}_{i=1}(\zeta_{i,\alpha},T_{\{n,n\}})=(\zeta_{1,\alpha},T_{\{n,n\}})\cdot(\zeta_{2,\alpha},T_{\{n,n\}})\cdot\ldots=(\zeta_\alpha,\mathcal{F})$.
\end{enumerate}

\begin{figure}[hbt]
        \centering
        \begin{subfigure}{0.4\textwidth}
            \centering
            \includegraphics[width=\textwidth]{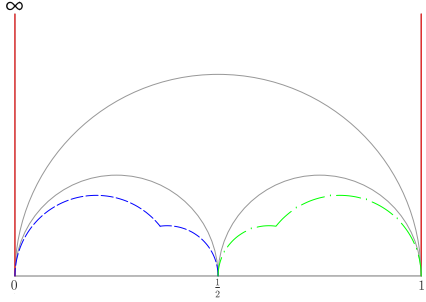}
            \caption{{An image of $T_{\{1,7\}}$.}}    
            \label{p7}
        \end{subfigure}
        \quad\quad
        \begin{subfigure}{0.4\textwidth}  
            \centering 
            \includegraphics[width=\textwidth]{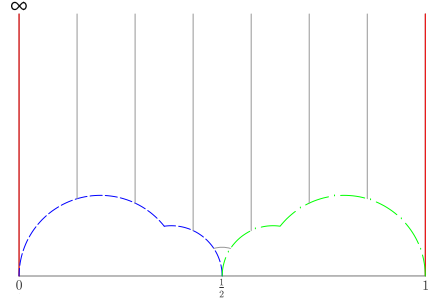}
            \caption{{An image of $T_{\{7,7\}}$.}}    
            \label{p72}
        \end{subfigure}
        \vskip\baselineskip
        \begin{subfigure}{0.4\textwidth}   
            \centering 
            \includegraphics[width=\textwidth]{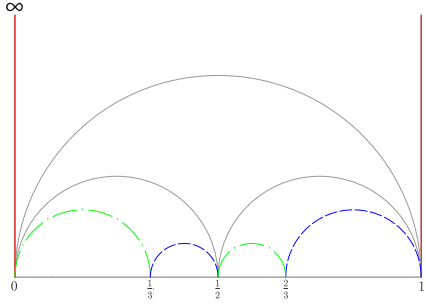}
            \caption{{An image of $T_{\{1,11\}}$.}}    
            \label{p11}
        \end{subfigure}
        \quad\quad
        \begin{subfigure}{0.4\textwidth}   
            \centering 
            \includegraphics[width=\textwidth]{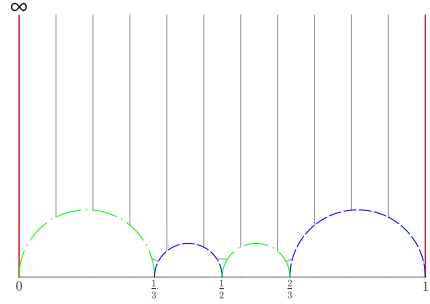}
            \caption{{An image of $T_{\{11,11\}}$.}}    
            \label{p112}
        \end{subfigure}
        \caption{Images of $P_7$ and $P_{11}$, with embedded structure of $\mathcal{F}$  for (a) and (c), and embedded structure of $\frac{1}{7}\mathcal{F}$ and $\frac{1}{11}\mathcal{F}$ on (b) and (d) respectively. Side pairings are indicated by bold, short dashed and long dashed lines. For (a) and (c), the dashed lines are part of $\mathcal{F}$.} 
        \label{Tn}
    \end{figure}
\begin{remark}
The algorithm that we obtain is not particularly useful for explicit computation (such an algortihm can be derived from taking all possible sub-paths), however it does have some useful theoretic properties, some of which we will see later in this paper and some of which we leave for future work. A more explicit algorithm can be found in \cite{MMT}.
\end{remark}
\section{Cutting sequences on $\mfaktor{\Gamma_0(n)}{\mathbb{H}}$}

In this section, we define cutting sequences on triangulated orbifolds and show the bijective relation between positive essentially periodic continued fractions (see definition \ref{esp}.(2.)) and closed curves on these triangulated orbifolds. Using this result, we show that the bounds of essentially periodic continued fractions grow exponentially when iteratively multiplied by a fixed integer. We then relate this result to the convergents of essentially periodic continued fractions and show that every eventually periodic continued fraction multiplies like an essentially periodic continued fraction.
\subsection{Cutting Sequences on $\mfaktor{\Gamma_0(n)}{\mathbb{H}}$}

In Section \ref{CFCS}, we defined cutting sequences of geodesic rays with respect to ideal triangulations of $\mathbb{H}$. The concept of a geodesic ray intersecting a triangle to form a left or right triangle is independent of metric and thus, for any triangulated surface we can define a cutting sequence for a geodesic ray relative to this triangulation.

However, when we take $\mfaktor{\Phi_\sigma}{\mathbb{H}}$, for $\sigma$ some Farey symbol (or equivalently, take the corresponding special polygon $P_\sigma$ and identify sides, which we denote $\sfaktor{P_\sigma}{\sim}$), we are not necessarily left with a surface. Instead, we obtain a two-dimensional \textit{orbifold}. Here we define a \textit{two-dimensional orbifold} to be a surface $S$ (possibly with boundary), with a set of marked points $M$ and a potentially empty set of orbifold points $Q$. In the case that $Q$ is empty, the orbifold will be a surface. When we supply the orbifold with a metric, each element of $M$ will correspond to \textit{cusps} with angle $0$ and each orbifold point will correspond to a \textit{cone point} with angle $\frac{2\pi}{k}$ for some $k\in\mathbb{N}>{1}$. We will only consider orbifolds with empty boundary, at least one cusp (element of $M$) and a potentially empty set of orbifold points $Q$, which can be decomposed into two disjoint subsets: $E_2$, the set of orbifold points with angle $\pi$ and $E_3$, the set of orbifold points with angle $\frac{2\pi}{3}$. When we take the corresponding special polygon $P_\sigma$ and identify sides via the side pairings, we see that elements of $E_2$ correspond to the central points the even edges, elements of $E_3$ correspond to the interior vertices of $P_\sigma$ formed by the odd edges and  elements of $M$ correspond to vertices on the boundary of $\mathbb{H}$ quotient the side pairings. 

\begin{remark} Following the definitions in Section \ref{RSK}, the number of cusps is exactly given by $|M|=t$ and the number of orbifold points with cone angles $\pi$ and $\frac{2\pi}{3}$ are $|E_2|=e_2$ and $|E_3|=e_3$, respectively.
\end{remark}

We define an \textit{arc} $\gamma$ on an orbifold $\mathcal{O}$ to be a geodesic path, which is disjoint from $M\cup{Q}$ except from its endpoints, with the following properties:
\begin{itemize}
\item The endpoints of $\gamma$ are contained in $M\cup{Q}$ and at least one endpoint is in $M$,
\item The only self-intersections of $\gamma$ occur at the endpoints of $\gamma$, if at all,
\item If $\gamma$ bounds a monogon i.e. both endpoints of $\gamma$ are at the same point in $M$, then this monogon either contains one element of $M$, one element of $E_3$ or two elements of $E_2$.
\end{itemize}
If $\gamma$ has one endpoint in $E_3$, then we will say that $\gamma$ is a \textit{structural arc}. We will say that a pair of arcs $\gamma$, $\gamma'$ are \textit{compatible}, if $\gamma\cap{\gamma'}\subset{M}$ (i.e. $\gamma$ and $\gamma'$ only intersect at endpoints which are also marked points). Similarly, we define a \textit{quotient triangulation} $T$ of an orbifold $\mathcal{O}$ to be a maximal collection of pairwise compatible arcs on $\mathcal{O}$. There are six possible of types triangle that can arise from a quotient triangulation, which we list in Table \ref{de}. 
\begin{table}[!h]
\caption{A table of the six possible types of triangles that can appear in a quotient triangulation and their lifts in $\mathbb{H}$. Elements of $P$ are indicated by $\bullet$, elements of $E_2$ are indicated by $\circ$ and elements of $E_3$ are indicated by $\square$. Dashed lines indicate structural arcs and their lifts.}
\label{de}

  \begin{tabular}{|c|c|c|c|}
      \hline

      Type &   Name    &  Diagram & Lift in $\mathbb{H}$  \\ \hline

(I) & Standard triangle &   
    \belowbaseline[0pt-\heightof{X}]{\begin{tikzpicture}
    \clip (-1.2,-0.2) rectangle (1.2,1.5);
    \filldraw (-0.75,0) circle (2pt);
    \filldraw (0.75,0) circle (2pt);
    \filldraw (0,1.299) circle (2pt);
    \draw (-0.75,0) to (0.75,0);
      \draw (-0.75,0) to (0,1.299);
      \draw (0,1.299) to (0.75,0);
    \end{tikzpicture}  }         
    &  \belowbaseline[0pt-\heightof{X}]{\begin{tikzpicture}
    \clip (-1.2,-0.2) rectangle (1.2,1.5);
    \filldraw (0,0) circle (2pt);
    \filldraw (1,0) circle (2pt);
    \filldraw (-1,0) circle (2pt);
    \draw[domain=0:180] plot ({cos(\x)}, {sin(\x)});
   \draw[domain=0:180] plot ({0.5*cos(\x)+0.5}, {0.5*sin(\x)});
   \draw[domain=0:180] plot ({0.5*cos(\x)-0.5}, {0.5*sin(\x)});
      \draw (-1.2,0) to (1.2,0);
    \end{tikzpicture}    }
  
     \\ \hline
    
    (II) & Self-folded triangle &
   \belowbaseline[0pt-\heightof{X}]{ \begin{tikzpicture}
   \clip (-1.2,-0.2) rectangle (1.2,1.95);
    \filldraw (0,1) circle (2pt);
    \filldraw (0,0) circle (2pt);
    \draw (0,0) to (0,1);
      \draw (0,0) to[in=0, out=45] (0,1.75);
      \draw (0,0) to[in=180, out=135] (0,1.75);
    \end{tikzpicture}  }       
    & \belowbaseline[0pt-\heightof{X}]{\begin{tikzpicture}
    \clip (-1.2,-0.2) rectangle (1.2,1.95);
    \filldraw (0,0) circle (2pt);
    \filldraw (1,0) circle (2pt);
    \filldraw (-1,0) circle (2pt);
      \node[draw=none] at (0.5,0.3) {\small{a}};
        \node[draw=none] at (-0.5,0.3) {\small{a}};
     \draw[domain=0:180] plot ({cos(\x)}, {sin(\x)});
   \draw[domain=0:180] plot ({0.5*cos(\x)+0.5}, {0.5*sin(\x)});
   \draw[domain=0:180] plot ({0.5*cos(\x)-0.5}, {0.5*sin(\x)});
      \draw (-1.2,0) to (1.2,0);
    \end{tikzpicture}  }
      \\ \hline
    
(IIIa) &  Quotient-2 triangle (a) &
   \belowbaseline[0pt-\heightof{X}]{ \begin{tikzpicture}
   \clip (-1.2,-0.2) rectangle (1.2,1.95);
    \filldraw (0,1.75) circle (2pt);
    \filldraw [fill=white] (0,1) circle (2pt);
    \filldraw (0,0) circle (2pt);
    \draw (0,0) to (0,0.925);
      \draw (0,0) to[in=0, out=45] (0,1.75);
      \draw (0,0) to[in=180, out=135] (0,1.75);
    \end{tikzpicture}    }   
    & \belowbaseline[0pt-\heightof{X}]{\begin{tikzpicture}
    \clip (-1.2,-0.2) rectangle (1.2,1.95);
    \filldraw[fill=white] (0,1) circle (2pt);
    \filldraw (1,0) circle (2pt);
    \filldraw (-1,0) circle (2pt);
         \draw[domain=94:180] plot ({cos(\x)}, {sin(\x)});
   \draw[domain=0:86] plot ({cos(\x)}, {sin(\x)});
    \draw (-1,0) to (-1,1.75);
      \draw (1,0) to (1,1.75);
      \draw (-1.2,0) to (1.2,0);
    \end{tikzpicture}  }
       \\ \hline
    
    (IIIb) & Quotient-2 triangle (b) & 
    \belowbaseline[0pt-\heightof{X}]{\begin{tikzpicture}
    \clip (-1.2,-0.2) rectangle (1.2,1.95);
    \filldraw [fill=white] (-0.275,0.85) circle (2pt);
    \filldraw [fill=white] (0.275,0.85) circle (2pt);
    \filldraw  (0,0) circle (2pt);
    \draw (0,0) to (0.25,0.775);
    \draw (0,0) to (-0.25,0.775);
      \draw (0,0) to[in=0, out=30] (0,1.75);
      \draw (0,0) to[in=180, out=150] (0,1.75);
    \end{tikzpicture}        }
    &\belowbaseline[0pt-\heightof{X}]{ \begin{tikzpicture}
    \clip (-1.2,-0.2) rectangle (1.2,1.95);
    \filldraw (0,0) circle (2pt);
    \filldraw (1,0) circle (2pt);
    \filldraw (-1,0) circle (2pt);
    \filldraw[fill=white] (0.5,0.5) circle (2pt);
    \filldraw[fill=white] (-0.5,0.5) circle (2pt);
     \draw[domain=98:180] plot ({0.5*cos(\x)+0.5}, {0.5*sin(\x)});
   \draw[domain=0:82] plot ({0.5*cos(\x)-0.5}, {0.5*sin(\x)});
    \draw[domain=98:180] plot ({0.5*cos(\x)-0.5}, {0.5*sin(\x)});
   \draw[domain=0:82] plot ({0.5*cos(\x)+0.5}, {0.5*sin(\x)});
      \draw[domain=0:180] plot ({cos(\x)}, {sin(\x)});
      \draw (-1.2,0) to (1.2,0);
    \end{tikzpicture} } 
       \\ \hline   
        
(IIIc)* &  Quotient-2 triangle (c) &
    \belowbaseline[0pt-\heightof{X}]{\begin{tikzpicture}
    \clip (-1.2,-0.2) rectangle (1.2,1.95);
    \filldraw [fill=white] (0.25,0.7) circle (2pt);
    \filldraw [fill=white] (0.875,0.9) circle (2pt);
    \filldraw [fill=white] (-0.875,0.9) circle (2pt);
    \filldraw (0,0) circle (2pt);
    \draw (0,0) to[in=270, out=30] (0.25,0.625);
      \draw (0,0) to[in=270, out=0] (0.875,0.8);
      \draw (0,0) to[in=270, out=180] (-0.875,0.825);
      \draw[dotted, domain=6:174] plot ({0.875*cos(\x)},{0.875*sin(\x)+0.875});
    \end{tikzpicture}    }   
    & \belowbaseline[0pt-\heightof{X}]{\begin{tikzpicture}
    \clip (-1.2,-0.2) rectangle (1.2,1.95);
    \filldraw[fill=white] (0,1) circle (2pt);
     \filldraw[fill=white] (-1,1.3) circle (2pt);
      \filldraw[fill=white] (1,1.3) circle (2pt);
    \filldraw (1,0) circle (2pt);
    \filldraw (-1,0) circle (2pt);
         \draw[domain=94:180] plot ({cos(\x)}, {sin(\x)});
   \draw[domain=0:86] plot ({cos(\x)}, {sin(\x)});
    \draw (-1,0) to (-1,1.225);
      \draw (1,0) to (1,1.225);
       \draw (-1,1.375) to (-1,1.75);
      \draw (1,1.375) to (1,1.75);
      \draw (-1.2,0) to (1.2,0);
    \end{tikzpicture}  }
       \\ \hline

(IV) &  Quotient-3 triangle  & 
    \belowbaseline[0pt-\heightof{X}]{\begin{tikzpicture}
    \clip (-1.2,-0.2) rectangle (1.2,1.95);
    \filldraw [fill=white] ([xshift=-2pt,yshift=-2pt]0,1) rectangle ++(4pt,4pt);
    \filldraw (0,0) circle (2pt);
      \draw (0,0) to[in=0, out=45] (0,1.75);
      \draw (0,0) to[in=180, out=135] (0,1.75);
      \draw[dashed] (0,0) to (0,0.93);
    \end{tikzpicture}   } 
     &\belowbaseline[0pt-\heightof{X}]{ \begin{tikzpicture}
    \clip (-1.2,-0.2) rectangle (1.2,1.95);
    \filldraw (1,0) circle (2pt);
    \filldraw (-1,0) circle (2pt);    
    \filldraw[fill=white] ([xshift=-2pt,yshift=-2pt]0,0.7) rectangle ++(4pt,4pt);
       \draw[dashed,domain=77:180] plot ({0.75*cos(\x)-0.25}, {0.75*sin(\x)});
   \draw[dashed,domain=00:103] plot ({0.75*cos(\x)+0.25}, {0.75*sin(\x)});
      \draw[domain=0:180] plot ({cos(\x)}, {sin(\x)});
      \draw (-1.2,0) to (1.2,0);
    \end{tikzpicture}  }
         \\ \hline
         
%

  \end{tabular}
\end{table}

\begin{remark}* The quotient-2 triangle (IIIc) occurs as a triangulation for exactly one orbifold. This orbifold has three elements in $E_2$ and a single cusp, and the triangle is formed by taking an arc between each point in $E_2$ and the cusp.  Only one subgroup $\Phi$ of $PSL_2(\mathbb{Z})$, given by $\Gamma_3=\langle \begin{psmallmatrix} 0 & -1\\ 1 & 0 \end{psmallmatrix}, \begin{psmallmatrix} 1 & -2\\ 1 & -1 \end{psmallmatrix}, \begin{psmallmatrix} 1 & -1\\ 2 & -1 \end{psmallmatrix} \rangle$, induces a quotient space $\mfaktor{\Phi}{\mathbb{H}}$ that allows such a triangle and only triangles of this type appear on this orbifold. There are two special polygons of $\Gamma_3$ with Farey symbols $\{\sfsss{\infty}{\circ}{0}{\circ}{\pm1}{\circ}{\infty}\}$.
\end{remark}

When we take the Farey complex $\mathcal{F}$ relative to some special polygon $P_\sigma$, it is not hard to see that $\mathcal{F}$ decomposes $P_\sigma$ into triangles of type $(\widetilde{\text{I}})-(\widetilde{\text{IV}})$ minus the lift of any structural arcs (where $(\widetilde{\text{I}})$ is the lift of (I) in $\mathbb{H}$ etc.). As a result, the projection of $\mathcal{F}$ decomposes $\mfaktor{\Phi}{\mathbb{H}}$ into triangles of type {(I)-(IIIc)} or into monogons containing a single element of $E_3$, for $\Phi$ any finite index subgroup of $PSL_2(\mathbb{Z})$. For the monogons containing a single element of $E_3$, we can construct a unique structural arc between this element of $E_3$ and the element of $M$ on the boundary of this monogon. In particular, the projection of $\mathcal{F}$ induces a unique quotient triangulation for all the quotient spaces $\mfaktor{\Phi}{\mathbb{H}}$, for $\Phi$ any finite index subgroup of $PSL_2(\mathbb{Z})$. However, it is not immediately clear when an ideal triangulation $T$ will induce a quotient triangulation of $\mfaktor{\Phi}{\mathbb{H}}$. In particular, it is not obvious that $\frac{1}{d}\mathcal{F}$ will induce a quotient triangulation  of $\mfaktor{\Gamma_0(n)}{\mathbb{H}}$ for $d\mid{n}$. The following lemma gives a sufficient condition for an ideal triangulation $T$ of $\mathbb{H}$ to induce a unique quotient triangulation on  $\mfaktor{\Phi}{\mathbb{H}}$.


\begin{lem}\label{Triang}
Let $\Phi$ be a finite subgroup of $PSL_2(\mathbb{Z})$ (excluding $\Gamma_3$), let $P_\Phi$ be a special polygon which is a fundamental domain for $\Phi$, and let $T$ be an ideal triangulation of $\mathbb{H}$, which is invariant under $\Phi$. Then the projection of $T$ decomposes $\mfaktor{\Phi}{\mathbb{H}}$ into triangles of type \emph{(I)-(IIIb)} or into monogons containing a single element of $E_3$. In particular, the projection of $T$ induces a unique quotient triangulation $T_\Phi$ of $\mfaktor{\Phi}{\mathbb{H}}$.
\end{lem}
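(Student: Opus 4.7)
The plan is to analyze, triangle by triangle, how the $\Phi$-action on $T$ determines the local structure of the projected complex. The central computation is: for any triangle $\tau$ of $T$, the stabilizer $\mathrm{Stab}_\Phi(\tau)$ can only be trivial or cyclic of order $3$. Indeed, any $g\in\mathrm{Stab}_\Phi(\tau)$ permutes the three ideal vertices of $\tau$; but $g$ is either the identity or a torsion element of $\Phi\subset PSL_2(\mathbb{Z})$, hence elliptic, so $g$ has no fixed points on $\partial\mathbb{H}$. Thus $g$ must act as a fixed-point-free permutation of three boundary vertices, and the only such permutation is a $3$-cycle, forcing order $3$. In particular, no order-$2$ element of $\Phi$ can stabilize $\tau$ setwise, so every $E_3$-orbifold point in the quotient lies in the interior of some triangle of $T$. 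An analogous argument shows that an edge $e$ of $T$ can only be setwise-stabilized by elements of order $1$ or $2$, where an involution swaps the endpoints of $e$ and fixes a unique interior point that descends to an $E_2$-orbifold point in $\mfaktor{\Phi}{\mathbb{H}}$.

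With these two constraints in hand, I would carry out the case analysis. If $|\mathrm{Stab}_\Phi(\tau)|=3$, the three vertices and three edges of $\tau$ collapse to a single cusp and a single arc in the quotient, producing a monogon whose interior contains exactly one $E_3$-point (the fixed point of the $\mathbb{Z}/3$-action). This is precisely the "monogon containing a single element of $E_3$" alternative of the statement; one recovers the type $(\mathrm{IV})$ cell of $T_\Phi$ by adjoining the unique geodesic structural arc from this $E_3$-point to the boundary cusp. If $\mathrm{Stab}_\Phi(\tau)$ is trivial, I would enumerate identifications induced by elements of $\Phi$ that do not fix $\tau$: no identifications give a standard triangle of type $(\mathrm{I})$; identification of two edges by a parabolic element fixing their common vertex gives a self-folded triangle of type $(\mathrm{II})$; involutions stabilizing one or two edges of $\tau$ (each contributing an $E_2$-point on that edge) give quotient-$2$ triangles of type $(\mathrm{IIIa})$ or $(\mathrm{IIIb})$, according to the combinatorics of which edges carry the involution.

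The main obstacle is ruling out type $(\mathrm{IIIc})$. This configuration arises when all three edges of $\tau$ are simultaneously stabilized by order-$2$ elements, so the quotient has three $E_2$-points and all three vertices collapse to a single cusp. I would show that this forces $\mfaktor{\Phi}{\mathbb{H}}$ to carry exactly one cusp and exactly three orbifold points of cone angle $\pi$, compute the Riemann--Hurwitz invariants $t=1$, $e_2=3$, $e_3=0$, $g=0$, and hence $d=3$; then invoke the Farey-symbol classification of finite-index subgroups of $PSL_2(\mathbb{Z})$ recalled in Section~\ref{RSK} to conclude that the only such subgroup is $\Gamma_3$. Since $\Phi\neq\Gamma_3$ by hypothesis, type $(\mathrm{IIIc})$ cannot appear.

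Finally, uniqueness of the induced quotient triangulation $T_\Phi$ is immediate once the decomposition is established: each triangle of type $(\mathrm{I})$--$(\mathrm{IIIb})$ is already a cell of $T_\Phi$, while each $E_3$-monogon admits a unique geodesic structural arc from its interior $E_3$-point to its boundary cusp. Assembling these cells produces $T_\Phi$ canonically from $T$, completing the argument.
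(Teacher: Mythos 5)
Your proposal reorganizes the argument around stabilizers of cells in $T$, whereas the paper argues directly at the level of fixed points. The paper proves two geometric Claims by contradiction with the triangulation axiom: for $E_2$, that every even-edge midpoint $m_e$ lies on an edge of $T$ (by showing $\delta$ and $\phi_e(\delta)$ would be \emph{distinct} yet overlapping triangles if $m_e$ were interior); for $E_3$, that no edge of $T$ passes through a triangle centre $c_e$ (by exhibiting three edges of $T$ meeting at an interior point). Your stabilizer computation --- that $\mathrm{Stab}_\Phi(\tau)$ is trivial or cyclic of order $3$, and that edge stabilizers have order $1$ or $2$, because torsion elements of $PSL_2(\mathbb{Z})$ are elliptic hence fixed-point-free on $\partial\mathbb{H}$ --- packages this into a reusable algebraic lemma and is a legitimate alternative, and your Riemann--Hurwitz computation makes explicit the (IIIc)$\Rightarrow\Gamma_3$ claim that the paper leaves as a remark.

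However, there are real gaps in the proposal as written. First, the bridge from ``stabilizers are constrained'' to ``orbifold points lie where claimed'' is not supplied: from the fact that no order-$2$ element stabilizes a triangle you cannot conclude anything about fixed-point locations until you observe that if $m_e$ lay in the interior of $\tau$, then $\tau$ and $\phi_e(\tau)$ are both triangles of $T$ whose interiors contain $m_e$, hence $\phi_e(\tau)=\tau$ (disjointness of interiors), hence $\phi_e\in\mathrm{Stab}_\Phi(\tau)$ --- contradiction. The analogous observation for edges is also needed. This is exactly the content of the paper's two Claims, and without it your argument has a missing step (and the sentence deducing ``every $E_3$-orbifold point lies in the interior of some triangle'' from the order-$2$ constraint is a non-sequitur; the order-$2$ constraint on triangle stabilizers locates $E_2$-points, while locating $E_3$-points requires the edge-stabilizer constraint). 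Second, the (IIIc) exclusion via Riemann--Hurwitz presumes the invariants $t=1$, $e_2=3$, $e_3=0$, $g=0$, i.e.\ that the (IIIc) cell \emph{is} the whole orbifold. You say you ``would show'' this, but the argument (that the three edge involutions make $\tau$ a fundamental domain, and that $\mathrm{Stab}_\Phi(\tau)$ is trivial since a nontrivial stabilizer would have order $3$ and place an $E_3$-point in $\tau$, so $\Phi$ equals the group generated by the three involutions) is not given; as stated, the Riemann--Hurwitz computation only produces $d\geq 3$, not $d=3$. Third, your case enumeration for trivial stabilizer --- in particular the claim that self-folded triangles arise exactly from ``a parabolic fixing the common vertex'' --- is asserted but not established, and it is not obviously exhaustive (two edges of $\tau$ can also be $\Phi$-equivalent under an element that does not fix the common vertex). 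These are all fillable, but they must be filled for the proof to be complete.
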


%
%

\begin{proof}
\noindent
1. Firstly, we will show that if a triangle $\delta$ in the ideal triangulation $T$ of $\mathbb{H}$ does not contain a lift of an orbifold point  in $\mfaktor{\Phi}{\mathbb{H}}$, then $\delta$ projects to a triangle of type (I) or (II) in $\mfaktor{\Phi}{\mathbb{H}}$.\par
\vspace{\baselineskip}
Let $\delta$ be a triangle in the ideal triangulation $T$ of $\mathbb{H}$, which does not contain a lift of an orbifold point in $\mfaktor{\Phi}{\mathbb{H}}$. Then, the projection of $\delta$ in $\mfaktor{\Phi}{\mathbb{H}}$ will not contain any elements of $Q=E_2\cup{E_3}$. Since $T$ is invariant under $\Phi$, geodesics in $\mathbb{H}$ will project to geodesic arcs in $\mfaktor{\Phi}{\mathbb{H}}$ and these geodesic arcs will be pairwise disjoint except for at $P$. As a result, the projection of $\delta$ will be triangles of type (I) or (II).\par
\vspace{\baselineskip}

\noindent
2. We now show that if $\delta$ contains the lift of an orbifold point in $E_2$, then $\delta$ projects to a triangle of type (IIIa)-(IIIb) in $\mfaktor{\Phi}{\mathbb{H}}$.\par
\vspace{\baselineskip}

\noindent
\textbf{Claim}: If $P_\Phi$ contains an even edge $e$, then any triangulation $T$ preserved by $\Phi$ must contain an edge that runs through $m_e$, where $m_e$ is the fixed point of $\phi_e$, the side pairing induced by the even edge $e$.\par
\vspace{\baselineskip}

\noindent
\textit{Proof of Claim}: First, we assume the opposite, that $m_e$ is not intersected by any edge of $T$. Then, since $e_2$ lies in the interior of $\mathbb{H}$, $m_e$ must lie in the interior of  $\delta$, some triangle in $T$. Two vertices of $\delta$ will lie on one side $e_+$ of the even edge $e$ and one vertex of $\delta$ will lie on the other side $e_-$. Since $\phi_e$ is an elliptic involution of order 2 with fixed point $m_e$, the image of $\phi_e(\delta)$ will contain $m_e$ and have 2 vertices in $e_-$ and one vertex in $e_+$. Since $\phi_e$ is an element of $\Phi$, it follows that $\phi_e(\delta)$ must be a triangle in $T$ (since, $T$ is invariant under $\Phi$). Both triangles $\delta$ and $\phi_e(\delta)$ contain the point $m_e$,  however $\delta\neq{\phi_e(\delta)}$, since the number of endpoints in $e_+$ and $e_-$ are different for $\delta$ and $\phi_e(\delta)$. This implies $\delta$ and $\phi_e(\delta)$ have non-trivial intersection and do not intersect along a common edge (since then $m_e$ would lie on this edge). Therefore, $T$ can not be an ideal triangulation and this is a contradiction to our initial assumptions.\hfill \textit{QED}.\par
\vspace{\baselineskip}

It follows from the above claim, that if a triangle $\delta$ in $T$ contains the point $m_e$, then this point lies on one of the edges of $\delta$. Such a triangle can either have one, two or three edges which each contain the lift of a point in $E_2$. These triangles will project to a quotient triangle in $\mfaktor{\Phi}{\mathbb{H}}$ of type (IIIa), (IIIb) or (IIIc) (which occurs only for $\Phi=\Gamma_3$), respectively.\par
\vspace{\baselineskip}

\noindent
3. Finally, we show that if $\delta$ contains the lift of an orbifold point in $E_3$, then $\delta$ projects to a monogon containing a single element of $E_3$. As seen above, we can then construct a unique structural arc between this element of $E_3$ and the element of $M$ which lies on the boundary of this monogon.\par
\vspace{\baselineskip}

\noindent
\textbf{Claim:} No edge in $T$ projects to an structural arc in $\mfaktor{\Phi}{\mathbb{H}}$.\par
\vspace{\baselineskip}

\textit{Proof of claim}. Assume that $E$ is an edge of an ideal triangulation $T$ in $\mathbb{H}$, which projects to an structural arc in $\mfaktor{\Phi}{\mathbb{H}}$. Then, $E$ must intersect the centre $c_e$ of a triangle formed be an odd edge $e$ in $P_\Phi$. We define $\phi_e$ to be the side pairing induced by this odd edge. Then, $\phi_e$ is an elliptic involution of order 3 with fixed point $c_e$. In particular, the images of $E$ under $Id$, $\phi_e$ and $\phi_e^{-1}$ are three geodesic which all intersect at $c_e$. Since $T$ is invariant under $\Phi$, all of the images of $E$ under $\Phi$ (and therefore under $Id$, $\phi_e$ and $\phi_e^{-1}$) must be edges in $T$. See Fig.~\ref{quottri}(a). However, we have multiple geodesics intersecting inside $\mathbb{H}$ and therefore, $T$ can not be an ideal triangulation and this is a contradiction to our initial assumptions.\hfill \textit{QED}.\par
\vspace{\baselineskip}

\begin{figure}[htb]
\centering
\begin{subfigure}[b]{0.8\textwidth}

\centering
\begin{tikzpicture}[scale=1.2]

\begin{scope}
    \clip (-1.2,-0.0756) rectangle (1.2,2.1);
    \filldraw (1,0) circle (2pt);
    \filldraw (-1,0) circle (2pt);
    \filldraw (0,0) circle (2pt);
    \filldraw[fill=white] ([xshift=-2pt,yshift=-2pt]0,0.7) rectangle ++(4pt,4pt);
       \draw[dashed,domain=77:180] plot ({0.75*cos(\x)-0.25}, {0.75*sin(\x)});
   \draw[dashed,domain=00:103] plot ({0.75*cos(\x)+0.25}, {0.75*sin(\x)});
      \draw (0,0) circle(1);
      \draw (-1.2,0) to (1.2,0);
      \draw[very thick] (0,0) to (0,0.625);
         \draw[very thick] (0,0.775) to (0,2.1);
         \end{scope}
         
\draw[dashed] (1.75,-0.5) to (1.75,2.6);       
        
        \begin{scope}
        \clip (2.3,-0.0756) rectangle (4.7,2.1);
    \filldraw (3.5,0) circle (2pt);
    \filldraw (2.5,0) circle (2pt);
    \filldraw (4.5,0) circle (2pt);
    \filldraw[fill=white] ([xshift=-2pt,yshift=-2pt]3.5,0.7) rectangle ++(4pt,4pt);
       \draw[very thick,domain=115.5:180] plot ({0.75*cos(\x)+3.75}, {0.75*sin(\x)});
      \draw[very thick,domain=00:104] plot ({0.75*cos(\x)+3.75}, {0.75*sin(\x)}); 
   \draw[very thick,domain=00:64.5] plot ({0.75*cos(\x)+3.25}, {0.75*sin(\x)});
   \draw[very thick,domain=76:180] plot ({0.75*cos(\x)+3.25}, {0.75*sin(\x)});
      \draw (3.5,0) circle(1);
      \draw (2.3,0) to (4.7,0);
      \draw (4,0) circle (0.5);
      \draw (3,0) circle (0.5);
      \draw[very thick] (3.5,0) to (3.5,0.625);
         \draw[very thick] (3.5,0.775) to (3.5,2.1);
         \end{scope}
    \end{tikzpicture} 
    \caption{A geodesic line (bold) passing through the point $c_e$ (left), and its images under $Id$, $\phi_e$ and $\phi_e^{-1}$ (right).}
    \label{rop}
  
     \end{subfigure}\vskip\baselineskip
     
\begin{subfigure}[b]{0.8\textwidth}
\centering
\begin{tikzpicture}[scale=1.2]
\begin{scope}
    \clip (2.3,-0.0756) rectangle (4.7,2.1);
    \filldraw (4.5,0) circle (2pt);
    \filldraw (2.5,0) circle (2pt);
    \filldraw (3.5,0) circle (2pt);
    \filldraw[fill=white] ([xshift=-2pt,yshift=-2pt]3.5,0.7) rectangle ++(4pt,4pt);
       \draw[dashed,domain=77:180] plot ({0.75*cos(\x)+3.25}, {0.75*sin(\x)});
   \draw[dashed,domain=00:103] plot ({0.75*cos(\x)+3.75}, {0.75*sin(\x)});\draw[very thick,domain=00:180] plot ({0.5*cos(\x)+3.5}, {0.5*sin(\x)});
   \draw[domain=00:180] plot ({0.5*cos(\x)+4}, {0.5*sin(\x)});
   \draw[ domain=00:180] plot ({0.5*cos(\x)+3}, {0.5*sin(\x)});
     \draw[ domain=00:180] plot ({cos(\x)+3.5}, {sin(\x)});
      \draw[dashed]  (3.5,0) to (3.5,0.64);
      \draw (2.3,0) to (4.7,0);
      \draw[very thick] (3,0) to (3,2.1);
         \draw[very thick] (4,0) to (4,2.1);
         \end{scope}
         
         \draw[dashed] (1.75,-0.5) to (1.75,2.6);       
        
        \begin{scope}
         \clip (-1.2,-0.0756) rectangle (1.2,2.1);
    \filldraw (1,0) circle (2pt);
    \filldraw (-1,0) circle (2pt);
    \filldraw[fill=white] ([xshift=-2pt,yshift=-2pt]0,0.7) rectangle ++(4pt,4pt);
       \draw[dashed,domain=77:180] plot ({0.75*cos(\x)-0.25}, {0.75*sin(\x)});
        \draw[ domain=00:180] plot ({cos(\x)}, {sin(\x)});
   \draw[dashed,domain=00:103] plot ({0.75*cos(\x)+0.25}, {0.75*sin(\x)});
      \draw (-1.2,0) to (1.2,0);
      \draw[very thick] (-0.5,0.705) to (-0.5,2.1);
         \draw[very thick] (0.5,0.705) to (0.5,2.1);
         \end{scope}
\end{tikzpicture}
\caption{A pair of geodesic rays (left), which form a triangle under the action of $Id$, $\phi_e$ and $\phi^{-1}_e$ (right).} 
\label{ra}
   
\end{subfigure}
\caption{Examples of edges and their images under the actions of $Id$, $\phi_e$ and $\phi^{-1}_e$, where $\phi_e$ is an elliptic involution of order 3 with fixed point $c_e$. The corresponding odd edge $e$ is also shown for structure.}
 \label{quottri}
     \end{figure}
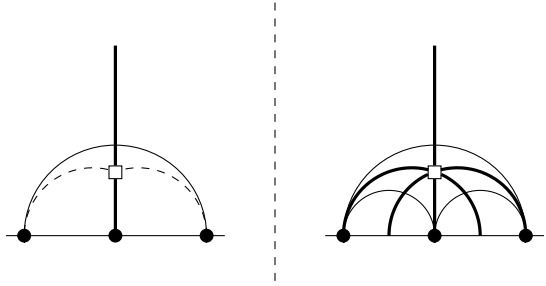
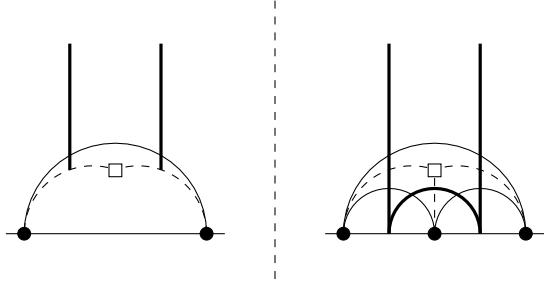

Following this claim, the point $c_e$ must lie in the interior of some triangle $\delta$ in $T$. The elliptic involution about $c_e$ will split $\mathbb{H}$ into three different regions, each containing a vertex of $\delta$. Therefore, the projection of $\delta$ on to $\mfaktor{\Phi}{\mathbb{H}}$ will be a monogon containing a single orbifold point with cone angle $\frac{2\pi}{3}$.
\end{proof}

Let $\lambda$ be a path on $\mfaktor{\Phi}{\mathbb{H}}$ and $\delta$ be a triangle in a quotient triangulation $T_\Phi$, which $\lambda$ passes through. Then we will say that $\lambda$ cuts $\delta$ to form a left (or right) triangle if, once having removed the structural arcs, the lift of $\lambda$ cuts the lift of $\delta$ to form a left (or right) triangle. We then derive  the cutting sequence of $\lambda$ with $T_\Phi$ in the usual sense, which we denote $(\lambda,T_\Phi)$. Here the space $\mfaktor{\Phi}{\mathbb{H}}$ is implied by the quotient triangulation. Obviously, the cutting sequence $(\lambda,T_\Phi)$ will be equivalent to the cutting sequence $(\overline{\lambda},\overline{T})$, where $\overline{\lambda}$ is the lift of $\lambda$ in $\mathbb{H}$ and $\overline{T}$ is the lift of $T_\Phi$ with structural arcs removed (which will be an ideal triangulation).

Note that since we remove the structural arcs when defining the left and right triangles, we do not necessarily need a quotient triangulation to take a cutting sequence and a quotient triangulation with structural arcs removed will be sufficient. However, since there is a unique way to construct these structural arcs, we will equate these two objects anyway.

Since $\mfaktor{\Gamma_0(n)}{\mathbb{H}}$ is equivalent to $\sfaktor{P_n}{\sim}$ (a special polygon for $\Gamma_0(n)$ quotient the side identifications), it follows that the projection of $\frac{1}{d}\mathcal{F}$ onto $\mfaktor{\Gamma_0(n)}{\mathbb{H}}$ for $d\mid{n}$, is equivalent to $\sfaktor{T_{\{d,n\}}}{\sim}$, the copy of $P_n$ with decoration induced by ${\frac{1}{d}\mathcal{F}}$, quotient the side identifications. Using this information and Lemma \ref{Triang}, we get the following theorem.

\begin{theorem}\label{orbi}
For every geodesic ray $\widetilde{\zeta}$ in $\mathbb{H}$ starting at the $y$-axis $I$  with endpoint $\alpha>0$, there is a canonical projection $\zeta$ onto $\mfaktor{\Gamma_0(n)}{\mathbb{H}}$ such that $(\widetilde{\zeta},\frac{1}{d}\mathcal{F})=(\zeta,\sfaktor{T_{\{d,n\}}}{\sim})$, for all $d\mid{n}$.
\end{theorem}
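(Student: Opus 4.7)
The plan is to reduce the theorem to three assembled pieces: (a) that $\frac{1}{d}\mathcal{F}$ descends to a well-defined quotient triangulation of $\mfaktor{\Gamma_0(n)}{\mathbb{H}}$, (b) that this quotient triangulation is precisely $\sfaktor{T_{\{d,n\}}}{\sim}$, and (c) that left/right cutting data is preserved by the quotient projection. The canonical projection $\zeta$ will simply be $\pi(\widetilde{\zeta})$, where $\pi\colon\mathbb{H}\to\mfaktor{\Gamma_0(n)}{\mathbb{H}}$ is the natural quotient map. Since $I$ has been chosen as a side of $P_n$, its image under $\pi$ is a distinguished edge of $\sfaktor{T_{\{d,n\}}}{\sim}$ from which to start reading the cutting sequence.

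For piece (a), I would apply Lemma \ref{Triang} to $\Phi=\Gamma_0(n)$ and $T=\frac{1}{d}\mathcal{F}$. The invariance hypothesis is supplied by Section \ref{struc}: for $d\mid n$ we have $\Gamma_0(n)\subseteq \Gamma_0(d)=\mathrm{Isom}^+(\mathcal{F})\cap\mathrm{Isom}^+(\tfrac{1}{d}\mathcal{F})$, so $\Gamma_0(n)$ preserves $\tfrac{1}{d}\mathcal{F}$. The excluded subgroup $\Gamma_3$ is not a congruence subgroup of the form $\Gamma_0(n)$, so the lemma applies unconditionally. For piece (b), I would observe that $T_{\{d,n\}}=P_n\cup(P_n\cap \tfrac{1}{d}\mathcal{F})$ is by construction the restriction of $\tfrac{1}{d}\mathcal{F}$ to a fundamental domain of $\Gamma_0(n)$, so quotienting by the side pairings (i.e.\ forming $\sfaktor{T_{\{d,n\}}}{\sim}$) yields exactly the image $\pi(\tfrac{1}{d}\mathcal{F})$, which Lemma \ref{Triang} has just identified with the unique induced quotient triangulation.

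For piece (c), I would argue that the definition of cutting sequence on a triangulated orbifold is given precisely by lifting: $(\zeta,\sfaktor{T_{\{d,n\}}}{\sim})$ equals the cutting sequence of the lift $\widetilde{\zeta}$ with the lifted triangulation, with structural arcs removed. Since the fiber $\pi^{-1}(\zeta)$ is a $\Gamma_0(n)$-orbit of geodesic rays and each such lift has the same cutting sequence with $\tfrac{1}{d}\mathcal{F}$ by the $\mathrm{Isom}^+(\mathbb{H})$-invariance lemma from Section \ref{CFCS}, we are free to take the distinguished lift $\widetilde{\zeta}$ itself, after which the two cutting sequences agree by definition. Orientation-preservation of $\Gamma_0(n)$ ensures that the left/right labels on triangles of $\tfrac{1}{d}\mathcal{F}$ are preserved, so no flip occurs when projecting and re-lifting.

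The main obstacle I expect is bookkeeping around the two types of orbifold points. Elements of $E_3$ arise from centres of odd triangles; a structural arc appears in the quotient triangulation there, but by the definition adopted just before the theorem, these arcs are erased before left/right labels are assigned, which matches the fact that $\tfrac{1}{d}\mathcal{F}$ in $\mathbb{H}$ has no edge through such a centre. Elements of $E_2$ correspond to midpoints of even edges, where the lifted triangulation $\tfrac{1}{d}\mathcal{F}$ genuinely has an edge through the midpoint; one must check that $\widetilde{\zeta}$ crossing such an edge corresponds correctly to $\zeta$ crossing the projected quotient-2 triangle (type (IIIa) or (IIIb)), which follows because the elliptic involution at the midpoint identifies the two half-edges in a way that does not change which pair of sides of a triangle $\zeta$ enters and exits. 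Once these two local checks are carried out, pieces (a)--(c) assemble to yield the claimed equality for every $d\mid n$.
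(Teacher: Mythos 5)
Your proposal follows essentially the same route as the paper: both invoke Lemma \ref{Triang} to get a quotient triangulation, both use the identification of $\sfaktor{T_{\{d,n\}}}{\sim}$ with the projection of $\tfrac{1}{d}\mathcal{F}$ established just before the theorem, and both rely on the cutting sequence on the orbifold being defined via lifting (with structural arcs discarded) together with the observation that $I$ gives a well-defined starting edge and direction of departure. Your added bookkeeping about $E_2$ and $E_3$ points is a correct and slightly more explicit spelling-out of what the paper summarizes by saying the cutting sequence is independent of structural arcs.
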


\begin{proof} 
By Lemma \ref{Triang}, we see that the projection of $\frac{1}{d}\mathcal{F}$ on $\mfaktor{\Gamma_0(n)}{\mathbb{H}}$ decomposes $\mfaktor{\Gamma_0(n)}{\mathbb{H}}$ into triangles of type (I)-(IIIb) or into monogons containing a single element of $E_3$, for all $d\mid{n}$. Since the $y$-axis $I$ is an edge in $\frac{1}{d}\mathcal{F}$ for all $d\in\mathbb{N}$, the projection $\zeta$ of $\widetilde{\zeta}$ in $\mfaktor{\Gamma_0(n)}{\mathbb{H}}$ is  unique and has a well defined starting edge and direction of departure, for all $\sfaktor{T_{\{d,n\}}}{\sim}$. It follows that, since the cutting sequence in $\mfaktor{\Gamma_0(n)}{\mathbb{H}}$ of $\zeta$ with $\sfaktor{T_{\{d,n\}}}{\sim}$ is independent of structural arcs, the cutting sequence in $\mfaktor{\Gamma_0(n)}{\mathbb{H}}$ of $\zeta$ with $\sfaktor{T_{\{d,n\}}}{\sim}$ is both well-defined and equivalent to the cutting sequence in $\mathbb{H}$ of $\widetilde{\zeta}$ with $\frac{1}{d}\mathcal{F}$ for all $d\mid{n}$. 
\end{proof}

\subsection{Closed Curves as Cutting Sequences}

For an arbitrary infinite sequence $\{a_i\}_{i\in\mathbb{N}}$, we will say the sequence is \textit{periodic} if there exists an $s\in\mathbb{N}$ such that $a_i=a_{s+i}$ for all $i\in\mathbb{N}$. We will write this sequence as $\{a_i\}_{i\in\mathbb{N}}=a_1,a_2,\ldots,a_s,a_1,a_2,\ldots,a_s,\ldots=\overline{a_1,a_2,\ldots,a_s}$ and refer to $s$ as a \textit{period} of the sequence. We use this to define the following types of continued fraction.

\begin{defn}\label{esp}
\begin{enumerate}
\item A \textit{strictly periodic} continued fraction, is any continued fractions with partial quotient expansion of the form $[\overline{a_0;a_1,\ldots,a_{s-1}}]$ or $[0;\overline{a_1,\ldots,a_s}]$. We refer to the set of all strictly periodic continued fraction as \textit{SP}.

\item An \textit{essentially periodic} continued fraction, is any continued fractions with partial quotient expansion of the form $[a_0;\overline{a_1,\ldots,a_{s}}]$ with $a_0\leq{a_s}$ or $[0;a_1,\overline{a_2,\ldots,a_{s+1}}]$ with $a_1\leq{a_s}$. We refer to the set of all essentially periodic continued fraction as \textit{ESP}.

\item An \textit{eventually periodic} continued fraction, is any continued fraction with partial quotient expansion of the form $[b_0;\ldots,b_r,\overline{a_1,\ldots,a_{s}}]$ where $r\in\mathbb{N}\cup{\{0\}}$. We refer to the set of all eventually periodic continued fraction as \textit{EVP}.

\end{enumerate}
\end{defn}

\begin{remark} An immediate consequence of these definitions is that \textit{SP}$\subset$\textit{ESP}$\subset$\textit{EVP}. We refer to the set of all positive strictly periodic continued fractions as \textit{SP}$^+$. Similarly, we refer to the sets of positive essentially periodic continued fractions and positive eventually periodic continued fractions as \textit{ESP}$^+$ and \textit{EVP}$^+$, respectively.\end{remark}

We can equivalently define essentially periodic continued fractions to be continued fractions of the form $[\overline{a_0;a_1,\ldots,a_{s}}]$ or $[0;\overline{a_1,\ldots,a_s}]$, where $a_0\in{\mathbb{Z}}$ and $a_i\in\mathbb{N}\cup\{0\}$. We will simplify this alternative definition to the case where $a_i\in\mathbb{N}$ for $0<i<s$ but $a_n\in\mathbb{N}\cup\{0\}$, since we can  always find such a presentation. In the case that $a_s\neq{0}$, we just get a strictly periodic continued fraction. In the case that $a_s=0$, we concatenate terms either side of the $0$ term. Explicitly, we have the following:
\begin{align*}
[\overline{a_0;a_1,\ldots,a_{s-1},0}]= & [a_0;a_1,\ldots,a_{s-1},0,a_0,a_1,\ldots,a_{s-1},0,a_0,\ldots]\\
 = & [a_0;a_1,\ldots,a_{s-1}+a_0,a_1,\ldots,a_{s-1}+a_0,\ldots]\\
 = & [a_0;\overline{a_1,\ldots,a_{s-1}+a_0}]\\
[0;\overline{a_1,\ldots,a_{s-1},0}]=&[0;a_1,\ldots,a_{s-1},0,a_1,\ldots,a_{s-1},0,a_1,\ldots]\\
= & [0;a_1,\ldots,a_{s-1}+a_1,a_2,\ldots,a_{s-1}+a_1,\ldots]\\
= & [0;a_1,\overline{a_2,\ldots,a_{s-1}+a_1}]
\end{align*}

\noindent
In terms of fans, a zero term corresponds to having an empty fan. Therefore, since the fans either side are of the same type, they both collapse into one bigger fan. It is occasionally useful to have a place holder fan of size zero, particularly when dealing with closed curves on our orbifold.

\begin{remark} We can similarly define the notion of being strictly, essentially or eventually periodic for all sequences of numbers and in particular for cutting sequences.
\end{remark}

For all eventually periodic continued fractions, we can write the periodic part with even period. To do this we simply take two copies of the period and take this to be our new period i.e. $[b_0;b_1,\ldots,b_r,\overline{a_1,\ldots,a_s}]=[b_0;b_1,\ldots,b_r,\overline{a_1,\ldots,a_s,a_1,\ldots,a_s}]=[b_0;b_1,\ldots,b_r,\overline{a'_1,\ldots,a'_{2s}}]$, where $a'_j=a_i$ for $j\equiv{i}$ $mod(s)$. For the rest of the paper, we will write all eventually periodic continued fractions with an even period. Taking the period to be even ensure that when we take the associated cutting sequence, the initial term and the final term of the period will correspond to different letters i.e. $\overline{L^{a_0}\cdots{R^{a_{2s}}}}$. This ensures that the parity of the cutting sequence will be nice, i.e. when we take multiple copies of the period, every term in the sequence will alternate. The following example emphasises why we take even periods:
$$[\overline{2;1,1}]=[\overline{2;1,1,2,1,1}]=\eta(\overline{L^2RLR^2LR})\neq{\eta(\overline{L^2RL})}$$
$$ [\overline{2;1,1,0}]=\eta(\overline{L^2RLR^0})=\eta(\overline{L^2RLR^0L^2RLR^0})=\eta(\overline{L^2RLL^2RL})=\eta(\overline{L^2RL}) $$

\noindent
We can also ensure that the finite prefix of an eventually periodic continued fraction has an even number of terms by shifting the period by a single term, if necessary. In other words, if we had the continued fraction expansion $[b_0;b_1,\ldots,b_{2r}\overline{a_1,\ldots,a_{2s}}]$ then this is equivalent to $[b_0;b_1,\ldots,b_{2r},a_1,\overline{a_2,\ldots,a_{2s},a_1}]$.

Whilst \textit{ESP}$^+$ is perhaps an unnatural object in a typical number theory setting, it is a very natural object with regards to the geometric approach. This is emphasised in the following theorem.

\begin{theorem}\label{cc}
Let $\mathcal{O}$ be an orbifold with quotient triangulation $T$. Then a path $\zeta$ relative to a starting edge $E$ in $T$ (excluding structural arcs) is homotopic to a closed curve on $\mathcal{O}$ if and only if $\eta((\zeta,T))\in$\textit{ESP}$^+$.
\end{theorem}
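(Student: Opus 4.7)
My plan is to prove both directions of Theorem \ref{cc} by lifting to $\mathbb{H}$ and using the fact that closed curves on $\mathcal{O}=\mfaktor{\Phi}{\mathbb{H}}$ based at $E$ correspond to elements of the deck group $\Phi$ acting on a fixed lift $E_0$ of $E$. Throughout I will take the period $s$ to be even, so that the final and opening fans of consecutive periods are of opposite letter type (and thus do not merge past the initial join).

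For the forward direction, assume $\zeta$ is homotopic to an infinite iterate of a closed loop $\gamma$ based at $E$. Then $\gamma$ lifts to a finite path from $E_0$ to $\phi(E_0)$ for some hyperbolic $\phi\in\Phi$, and $\widetilde{\zeta}$ is homotopic to the concatenation $E_0\to\phi(E_0)\to\phi^2(E_0)\to\cdots$, tending to the attracting fixed point of $\phi$ in $\partial\mathbb{H}$. Because $\phi$ preserves the lifted triangulation $\bar T$, each segment contributes the same finite cutting sequence, giving a sequence that is strictly periodic from the first edge onwards; the only care is at the joins, where the closing partial fan of one period merges with the opening partial fan of the next (since $\gamma$ reenters $E$ from the same side it departs). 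This produces a cutting sequence of the form $[a_0;\overline{a_1,\ldots,a_s}]$ with $a_0\leq a_s$, the inequality recording that $L^{a_0}$ is the portion of the full merged fan $L^{a_s}$ lying ahead of $E_0$; the degenerate case where $\gamma$ departs into a right fan produces $[0;a_1,\overline{a_2,\ldots,a_{s+1}}]$.

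For the reverse direction, suppose the cutting sequence of $\zeta$ is $[a_0;\overline{a_1,\ldots,a_s}]$ with $a_0\leq a_s$ and $s$ even. Lift $\widetilde{\zeta}$ to $\mathbb{H}$ starting at a lift $E_0$ of $E$, and let $E_1$ be the edge reached after $\widetilde{\zeta}$ has traversed the first $a_0$ triangles, then the next $s-1$ complete fans of sizes $a_1,\ldots,a_{s-1}$, then $a_0$ additional triangles inside the next $L$-fan of size $a_s$. The tail of $\widetilde{\zeta}$ read from $E_1$ has the same cutting sequence with $\bar T$ as the tail from $E_0$, so by a rigidity argument analogous to Lemma 3.1.1 both tails terminate at a common $\alpha\in\partial\mathbb{H}$; hence $\alpha$ is a quadratic irrational and is the attracting fixed point of a unique hyperbolic isometry $\phi$ of $\mathbb{H}$ that preserves $\bar T$ and satisfies $\phi(E_0)=E_1$. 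The segment of $\widetilde{\zeta}$ from $E_0$ to $E_1$ then projects to a closed loop $\gamma$ on $\mathcal{O}$ based at $E$, and $\zeta$ is homotopic to the infinite iterate of $\gamma$.

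The main obstacle will be verifying that $\phi\in\Phi$ rather than in the strictly larger group $\mathrm{Isom}^+(\bar T)$ (which equals $n^{-*} SL_2(\mathbb{Z}) n^*$ when $\bar T=\frac{1}{n}\mathcal{F}$). I plan to address this by arguing that $E_1$ must project to the edge $E$ on the orbifold: on the finite quotient $\mathcal{O}$ the sequence of (edge, direction) states visited by $\zeta$ lives in a finite set and evolves deterministically from the cutting sequence, so two states producing the same infinite future cutting sequence must coincide; periodicity then forces the state at the end of the first period to equal the starting state at $E$, and the constraint $a_0\leq a_s$ pins this first coincidence to exactly the position where $E_1$ was chosen. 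Once $E_0$ and $E_1$ are known to lie over the same edge, there is $\psi\in\Phi$ with $\psi(E_0)=E_1$, and the direction-matching property forces $\phi=\psi\in\Phi$. Minor bookkeeping handles the symmetric $a_0=0$ case and the possibility that $\alpha$ is a cusp of $\mathcal{O}$, in which case $\zeta$ is homotopic to an iterate of a cuspidal loop.
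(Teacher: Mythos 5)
Your forward direction follows essentially the same route as the paper's: the paper works directly in the quotient, writing the cutting sequence of one circuit as $L^{n_0}R^{n_1}\cdots R^{n_{2m-1}}$ with $n_0,n_{2m-1}\in\mathbb{N}\cup\{0\}$ and running a four-way case analysis on whether $n_0$, $n_{2m-1}$ vanish to handle the merging at the join; you phrase the same merging via lifts and a deck transformation $\phi\in\Phi$, which is fine (your parenthetical that $\gamma$ ``reenters $E$ from the same side it departs'' is backwards, since for the periodic extension to be non-backtracking the loop must cross $E$, but the merging observation itself is the right one).

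The reverse direction has a genuine gap. You claim that two (edge, direction) states producing the same infinite future cutting sequence must coincide, and use this together with periodicity to force $E_1=E_0$ on $\mathcal{O}$, whence $\phi\in\Phi$. But forward determinism of the cutting-sequence automaton (the present state and the next letter determine the next state) does not give you that the present state is determined by its future letters --- that is an unrelated injectivity statement and it is false in general. Any orientation-preserving isometry of the lifted triangulation lying outside $\Phi$ but carrying one lift of an edge to another (for $\Gamma_0(n)$, elements of its normalizer such as Atkin--Lehner involutions) yields two distinct states on $\mathcal{O}$ with identical infinite cutting sequences. Independently of such symmetries, the first recurrence need not occur after one period: if the primitive closed loop reads $v^k$ for some $k>1$, then $E_0,E_1,\dots,E_{k-1}$ are distinct states all sharing the same tail $\overline{v}$, and the normalization $a_0\le a_s$ merely fixes where within the merged fan to cut the period, not whether the edge recurs after a single period. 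The paper's proof avoids all of this: it first applies the pigeon-hole principle to the finite state set $\overline{\mathcal{E}}$ to obtain \emph{some} recurrence $E_j=E_{j+k}$, and then uses \emph{backward} determinism --- given the arrival state and the one-period word, the departure state is determined up to homotopy --- to push that coincidence back step by step to $E_0=E_k$. The resulting closed loop traverses $k$ copies of the period rather than one, and because $E_0$ and $E_k$ are then literally the same state on $\mathcal{O}$, the lifted deck transformation between them is automatically in $\Phi$, which is exactly the obstruction you correctly flagged but did not correctly clear.
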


\begin{proof}$(\Rightarrow)$: Let $\zeta$ be a closed curve which passes through the edge $E$ in $\mathcal{O}$. Let $\zeta_{1}$ be the representation of $\zeta$, which starts and ends at $E$ and follows $\zeta$ exactly once. Similarly, define $\zeta_i$ for $i\in\mathbb{N}$, to be the representation of $\zeta$ that starts and ends at $E$ and goes round $\zeta$ exactly $i$ times. By using a zero term as place holder (if required), we can always write $(\zeta_{1},S)$ in the form $\{n_0,n_1,\ldots,n_{2m-1}\}=L^{n_0}R^{n_1}\cdots{R^{n_{2m-1}}}$, where $n_0,n_{2m-1}\in\mathbb{N}\cup\{0\}$ and $n_i\in\mathbb{N}$ for $i\in\{1,\ldots,2m-2\}$. As a result, we can express $(\zeta_i,T)$ in alternating fans, as follows:
\begin{align*}
(\zeta_{i},T) &=\{\underbrace{n_0,n_1,\ldots,n_{2m-1},\ldots,n_0,n_1,\ldots,n_{2m-1}}_{\text{$i$ times}}\} \\
&=\underbrace{L^{n_0}R^{n_1}\cdots{R^{n_{2m-1}}\cdots{L^{n_0}R^{n_1}\cdots{R^{n_{2m-1}}}}}}_{\text{$i$ times}}
\end{align*}

Taking the limit as $i$ tends to infinity, we observe that $(\zeta,T)=\lim_{i\rightarrow{\infty}}(\zeta_i,T)=\{\overline{n_0,n_1,\ldots,n_{2m-1}}\}=\overline{L^{n_0}R^{n_1}\cdots{R^{n_{2m-1}}}}$. We investigate the four cases, which arise from this, based on whether $n_0,n_{2m-1}$ are zero or non-zero:
\begin{itemize}
\item If $n_0\neq{0}$ and $n_{2m-1}\neq{0}$, then the resulting word is reduced and therefore $\eta((\zeta,T))\in{SP^+}\subset{ESP^+}$.

\item If $n_0=n_{2m-1}=0$, then $(\zeta,S)=\overline{L^0R^{n_1}\cdots{R^0}}=\overline{R^{n_1}\cdots{L^{n_{2m-2}}}}$. This is now in reduced form and therefore $\eta((\zeta,T))\in{SP^+}\subset{ESP^+}$.

\item If $n_0=0$ and $n_{2m-1}\neq{0}$, then $(\zeta,T)=\overline{L^0R^{n_1}\cdots{R^{n_{2m-1}}}}=\overline{R^{n_1}\cdots{R^{n_{2m-1}}}}=R^{n_1}\overline{L^{n_2}\cdots{R^{n_1+n_{2m-1}}}}$. This is now in reduced form and therefore $\eta((\zeta,T))\in{ESP^+}$.

\item If $n_0\neq{0}$ and $n_{2m-1}={0}$, then $(\zeta,T)=\overline{L^{n_0}R^{n_1}\cdots{R^{0}}}=\overline{L^{n_0}R^{n_1}\cdots{L^{n_{2m-2}}}}=L^{n_0}\overline{R^{n_1}\cdots{L^{n_0+n_{2m-2}}}}$. This is now in reduced form and therefore $\eta((\zeta,T))\in{ESP^+}$.
\end{itemize}  
Since all possible forms of $(\zeta,T)$ satisfy $\eta((\zeta,T))\in{ESP^+}$, the result follows.\par\vspace{\baselineskip}


$(\Leftarrow)$:  Since $\mathcal{O}$ is finitely triangulated with quotient triangulation $T$, we can define a cutting sequence on it. We denote the set of non-structural arcs in $T$ as $\mathcal{E}$. Every edge $E\in\mathcal{E}$ will be an edge of at most two triangles in the triangulation $T$, which we arbitrarily label $\tau^+_E$ and $\tau^-_E$. For each of these triangles, we can approach $E$ from at most two directions, that is, either via a left triangle or a right triangle. Note that for $E$ an edge from an element of $M$ to an element of $E_2$, this edge will have a single direction of approach. We define $\overline{\mathcal{E}}$ to be the set of all possible directions of approach for all edges in $\mathcal{O}$. We can then think of the abstract set of all theoretical directions of approach as the Cartesian product $\mathcal{E}\times\{+,-\}\times\{L,R\}$, where the set $\{+,-\}$ represents the choice of approaching an edge $E$ via $\tau^+_E$ or $\tau^-_E$ and the set $\{L,R\}$ represents the choice of approaching an edge $E$ via left or right triangles. Since $\mathcal{E}$, $\{+,-\}$ and $\{L,R\}$ are all finite sets, $\overline{\mathcal{E}}\subset\mathcal{E}\times\{+,-\}\times\{L,R\}$ is also a finite set. See Fig.~\ref{doa} for a pictorial representation of direction of approach.

Let $\overline{\alpha}$ be a positive essentially periodic continued fraction with one of two forms:

\renewcommand{\labelenumi}{(\roman{enumi})}
 \begin{enumerate}[wide=15pt, widest=99,leftmargin=45pt, labelsep=*] 
\item $\overline{\alpha}=[\overline{a_0;a_1,\ldots,a_{2s-1}}]$ with $a_{s-1},a_{2s-1}\in\mathbb{N}\cup\{0\}$ and $a_i\in\mathbb{N}$ otherwise.
\item $\overline{\alpha}=[0;\overline{a_1,\ldots,a_{2s}}]$ with $a_s,a_{2s}\in\mathbb{N}\cup\{0\}$ and $a_i\in\mathbb{N}$ otherwise.
\end{enumerate}
 \begin{figure}[ht]
  \includegraphics[width=0.8\linewidth]{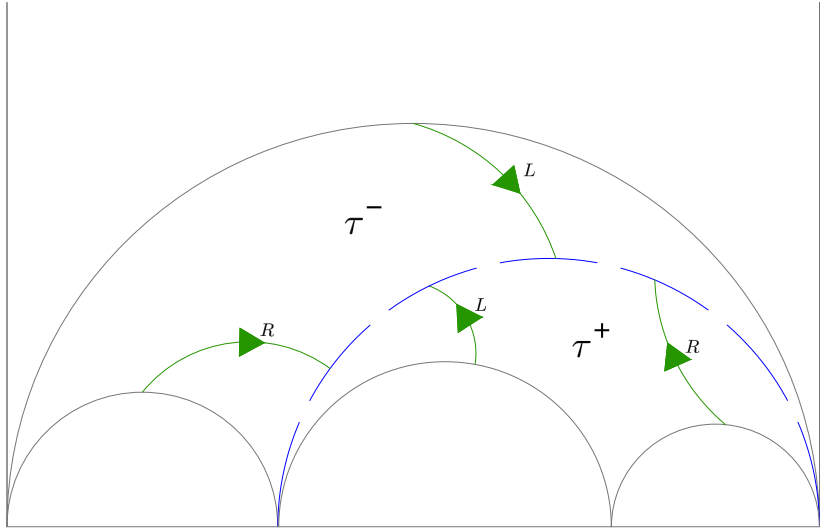} 
  \caption{An image depicting the four possible directions of approach to the blue edge $E$.}
  \label{doa}
\end{figure}

\noindent
For case (i),  we can write $\eta^{-1}(\overline{\alpha})=\overline{L^{a_0}R^{a_1}\cdots{R^{a_{2s-1}}}}$. Similarly, for case (ii), we can write $\eta^{-1}(\overline{\alpha})=\overline{R^{a_1}L^{a_2}\cdots{L^{a_{2s}}}}$. We define the object $\overline{\alpha_1}$ as follows:

\begin{align*}
\overline{\alpha_1}=
\begin{cases} [a_0;a_1,\ldots,a_{2s-1}] &\text{for case (i)}  \\
 [0;a_1,a_2,\ldots,a_{2s}] &\text{for case (ii)} 
\end{cases}
\end{align*}

For both of the above values of $\overline{\alpha_1}$, $\eta^{-1}(\overline{\alpha_1})$ starts and ends with different letters, since $\overline\alpha_1$ is of even length. 

Given a starting edge $E_0$ with chosen direction of departure, we can find a homotopy class of geodesic rays $[\zeta]$, such that $\eta((\zeta,T))=\overline{\alpha}$, for all $\zeta\in[\zeta]$. We define $\zeta_{1}$ to be a geodesic path (unique up to homotopy), which starts at $E_0$ (with the same direction of departure as $\zeta$), terminates at some edge $E_1$ and satisfies $\eta((\zeta_{1},T))=\overline{\alpha_1}$. We similarly define $\zeta_i$ to be a  geodesic path (up to homotopy) in $\mathcal{O}$ relative to $E_{i-1}$, with direction of departure opposite to the direction of approach for $\zeta_{i-1}$, such that $(\zeta_i,T)=\overline{\alpha_1}$. Due to the fact we took $\overline{\alpha_1}$ with even length, $\zeta_i$ approaches $E_i$ via the same type of triangles for every $i\in\mathbb{N}$, i.e. if $\zeta_1$ approaches $E_1$ via left triangles, then every $\zeta_i$ approaches $E_i$ via left triangles. Note that this fan may in fact be empty but this does not change the fact that the cutting sequence consists solely of alternating terms. Since the cutting sequences $(\zeta,T)$ and $(\bigcup^{\infty}_{i=1}\zeta_i,T)$ (with canonical ordering) are equivalent and $\zeta$ and $\zeta_1$ have the same starting edge, we can homotope the collection of $\zeta_i$, $\bigcup^{\infty}_{i=1}\zeta_i$, to look like $\zeta$. Then, the set of edges (with the direction of approach) which are the endpoints for each $\zeta_i$,  $\bigcup^{\infty}_{i=1}E_i$ is a subset of $\overline{\mathcal{E}}$.
Since $i$ runs from $1$ to $\infty$ and $\overline{\mathcal{E}}$ is finite, it follows from the pigeon-hole principle that there exist $j,k\in\mathbb{N}$ such that $E_j=E_{j+k}$, with the same direction of approach and the same type of approach via left/right triangles. 

As seen above, for each $E_{i}$, we can uniquely define up to homotopy the path $\zeta_i$. By the same argument, for each $E_{i}$ we can uniquely define up to homotopy the path $\zeta_{i-1}$. 
Hence, if $E_{j+k}=E_j$, then $\zeta_{j-1}$ and $\zeta_{j+k-1}$ are homotopic and $E_{j-1}=E_{j+k-1}$. Using iteration, we recover that for such a $k$, $E_0=E_k$. For each $l,l'\in\mathbb{N}$ with $l\equiv{l'}$ mod $k$, the paths $\zeta_l$ and $\zeta_{l'}$ are homotopic. We can homotope all such paths $\zeta_l$ and $\zeta_{l'}$  to be concurrent and since $E_0=E_k$, we recover a closed curve. Since $\zeta$ is homotopic to $\bigcup^{\infty}_{i=1}\zeta_i$, $\zeta$ is therefore homotopic to a closed curve.
\end{proof}

It is worth noting that this technique works for any type of periodicity, that is, if $\overline\alpha$ has a periodic tail, the representation $\zeta_\alpha$ of $\overline\alpha$, relative to some quotient triangulation of $\mathcal{O}$, is homotopic to a closed curve. If $\overline\alpha$ is essentially periodic, $\zeta_\alpha$ is homotopic to a closed curve, as above. If $\overline\alpha$ is eventually periodic, $\zeta_\alpha$ can be decomposed into a finite path which joins onto an infinite path homotopic to a closed curve. As a result, we can show that $ESP^+$ and $EVP^+$ are both closed classes under multiplication by any rational number. Below we provide the statement for $ESP^+$, but the proof that $EVP^+$ is closed under rational multiplication follows trivially. 

\begin{cor}[Theorem \ref{cc}]\label{rcc} The $\overline{n}$ map, maps $ESP^+$ to $ESP^+$. In particular, for all $q\in\mathbb{Q}_{>0}$ and $\overline{\alpha}\in{ESP^+}$, we have $\overline{q\alpha}\in{ESP^+}$. 
\end{cor}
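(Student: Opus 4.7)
\emph{Proof proposal.} The key observation is that the ``homotopic to a closed curve'' condition in Theorem \ref{cc} is purely topological and therefore independent of the choice of quotient triangulation, while Theorem \ref{orbi} tells us that the two cutting sequences $\overline{\alpha}$ and $\overline{n\alpha}$ arise from the \emph{same} projected path on $\mfaktor{\Gamma_0(n)}{\mathbb{H}}$ relative to two different quotient triangulations. The plan is to exploit this tension directly.

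Concretely, for the integer case I would let $\overline{\alpha}\in\textit{ESP}^+$ and let $\widetilde{\zeta}$ be the geodesic ray from the $y$-axis $I$ to $\alpha>0$, so that $(\widetilde{\zeta},\mathcal{F})=\overline{\alpha}$ and $(\widetilde{\zeta},\frac{1}{n}\mathcal{F})=\overline{n\alpha}$ by the scaling discussion of Section \ref{TFC}. Theorem \ref{orbi} then identifies these cutting sequences with $(\zeta,\sfaktor{T_{\{1,n\}}}{\sim})=\overline{\alpha}$ and $(\zeta,\sfaktor{T_{\{n,n\}}}{\sim})=\overline{n\alpha}$, where $\zeta$ is the canonical projection of $\widetilde{\zeta}$ onto $\mfaktor{\Gamma_0(n)}{\mathbb{H}}$. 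Applying Theorem \ref{cc} to the triangulation $\sfaktor{T_{\{1,n\}}}{\sim}$ tells us that $\zeta$ is homotopic to a closed curve on the orbifold; applying the converse direction of Theorem \ref{cc} to $\sfaktor{T_{\{n,n\}}}{\sim}$ then forces $\overline{n\alpha}\in\textit{ESP}$. Positivity is immediate from $n\alpha>0$, so $\overline{n\alpha}\in\textit{ESP}^+$.

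For general $q=a/b\in\mathbb{Q}_{>0}$ (in lowest terms) I would factor the map as $\overline{q}=\overline{1/b}\circ\overline{a}$. The integer step is handled above. For the division step, given $\overline{\beta}\in\textit{ESP}^+$ I would take the geodesic ray $\widetilde{\eta}$ from $I$ to $\beta/b>0$, so that $(\widetilde{\eta},\frac{1}{b}\mathcal{F})=\overline{\beta}$ and $(\widetilde{\eta},\mathcal{F})=\overline{\beta/b}$. Running the same argument on $\mfaktor{\Gamma_0(b)}{\mathbb{H}}$ but with the roles of the two triangulations swapped shows that the projection of $\widetilde{\eta}$ is homotopic to a closed curve via $\sfaktor{T_{\{b,b\}}}{\sim}$, hence also via $\sfaktor{T_{\{1,b\}}}{\sim}$, and Theorem \ref{cc} then yields $\overline{\beta/b}\in\textit{ESP}^+$.

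The only bookkeeping I foresee is checking that the starting-edge hypothesis of Theorem \ref{cc} is consistent across the two triangulations in play. This is automatic since $I$ is an edge of $\frac{1}{d}\mathcal{F}$ for every $d\in\mathbb{N}$, so its projection is a common edge of both $\sfaktor{T_{\{1,n\}}}{\sim}$ and $\sfaktor{T_{\{n,n\}}}{\sim}$, and the scaling maps $n^*$ preserve orientation so that $\zeta$ has a well-defined direction of departure at $I$ in each triangulation. Beyond this, no genuine obstacle remains --- all the heavy lifting has already been done by Theorems \ref{orbi} and \ref{cc}.
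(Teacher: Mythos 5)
Your proof is correct and takes essentially the same route as the paper's: apply Theorem \ref{cc} with the triangulation $\sfaktor{T_{\{1,n\}}}{\sim}$ to see that the projected curve is homotopic to a closed curve, observe that this is a triangulation-independent property of the curve, apply the converse direction of Theorem \ref{cc} with $\sfaktor{T_{\{n,n\}}}{\sim}$ to conclude $\overline{n\alpha}\in ESP^+$, and then handle $q=r/s$ by composing integer multiplication with the symmetric division-by-$s$ argument. The remark about $I$ being a common edge of both quotient triangulations is a helpful piece of bookkeeping that the paper's terser proof leaves implicit.
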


\begin{proof} Let $\zeta_\alpha$ be a geodesic ray, starting at the $y$-axis and terminating at some point $\alpha>0$ with $\overline{\alpha}\in{ESP^+}$. Then $\overline{\zeta_\alpha}$ is homotopic to a closed curve in $\sfaktor{T_{\{1,n\}}}{\sim}$. The map $\overline{n}:(\overline{\zeta_\alpha},\sfaktor{T_{\{1,n\}}}{\sim})\to{(\overline{\zeta_\alpha},\sfaktor{T_{\{n,n\}}}{\sim})}$ maps $\overline{\zeta_\alpha}$ to itself. As such, $\zeta_\alpha$ is a closed curve in $\sfaktor{T_{\{n,n\}}}{\sim}$ and $\overline{n\alpha}=(\zeta_\alpha,\sfaktor{T_{\{n,n\}}}{\sim})$ defines a positive essentially periodic continued fraction by Theorem \ref{cc}.

We can similarly define the map ${\overline{n}}^{-1}:(\zeta_\alpha,\sfaktor{T_{\{n,n\}}}{\sim})\to{(\zeta_\alpha,\sfaktor{T_{\{1,n\}}}{\sim})}$ to recover $\overline{\frac{\alpha}{n}}=(\zeta_\alpha,\sfaktor{T_{\{1,n\}}}{\sim})$ from $\overline{\alpha}=(\zeta_\alpha,\sfaktor{T_{\{n,n\}}}{\sim})$. Let $q=\frac{r}{s}$. Since $\overline{r\alpha}\in{ESP^+}$ and $\overline{\frac{\alpha}{s}}\in{ESP^+}$, $\overline{q\alpha}=\overline{\frac{r}{s}\alpha}\in{ESP^+}$.
\end{proof}

\subsubsection{Convergents of Essentially Periodic Continued Fractions}

The property of $\zeta_\alpha$ being homotopic to a closed curve on some orbifold $\mathcal{O}$ can also be used to prove some interesting facts regarding the convergent denominators/numerators of both strictly periodic and essentially periodic continued fractions. Here we take our orbifold to be $\mfaktor{\Gamma_0(n)}{\mathbb{H}}$ for some $n\in\mathbb{N}$. Using the link between convergents and common vertices of the fans in $(\zeta_\alpha,\mathcal{F})$, and Theorem \ref{cc}, we show that for any natural number $n$ and $\overline{\alpha}$ any strictly periodic continued fraction, $\alpha$ has infinitely many convergent denominators and numerators divisible by $n$. We also show for any natural number $n$ and any essentially periodic continued fraction $\overline{\alpha}$, that:

\begin{itemize}
\item For $\alpha>1$, there are infinitely many convergent denominators of $\overline{\alpha}$ which are divisible by $n$.
 \item For $\alpha<1$, there are infinitely many convergent numerators of $\overline{\alpha}$ which are divisible by $n$.
 \end{itemize}

\begin{theorem}\label{conden}
\tolerance=800
Let $\overline\alpha\in{SP^+}$, then for every $n\in\mathbb{N}$ there are infinitely many convergent denominators $q_k$ of $\overline\alpha$ such that $n\mid{q_k}$.

Let $\overline\alpha\in{ESP^+}$ with $\alpha{>1}$, then for every $n\in\mathbb{N}$ there are infinitely many convergent denominators $q_k$ of $\overline\alpha$ such that $n\mid{q_k}$.
\end{theorem}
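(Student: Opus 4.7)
The plan is to combine Theorem \ref{cc} with the $\Gamma_0(n)$-orbit description $\Gamma_0(n) \cdot \infty = \{p/q \in \mathbb{Q} : \gcd(p,q) = 1, \, n \mid q\} \cup \{\infty\}$. One inclusion is immediate: $\begin{psmallmatrix} a & b \\ c & d \end{psmallmatrix} \in \Gamma_0(n)$ sends $\infty$ to $a/c$, which is already in reduced form (since the determinant is $1$) with $n \mid c$. For the converse, given $p/q$ reduced with $n \mid q$, B\'ezout produces integers $b, d$ with $pd - qb = 1$, so $\begin{psmallmatrix} p & b \\ q & d \end{psmallmatrix} \in \Gamma_0(n)$ sends $\infty$ to $p/q$. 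Hence the convergents $p_k/q_k$ of $\overline\alpha$ with $n \mid q_k$ are exactly those appearing as common vertices of fans in $(\zeta_\alpha, \mathcal{F})$ that lie in the $\Gamma_0(n)$-orbit of $\infty$; equivalently, those whose image in $\mfaktor{\Gamma_0(n)}{\mathbb{H}}$ is the cusp $[\infty]$.

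For the case $\alpha > 1$ (which covers $\overline\alpha \in ESP^+$ with $\alpha > 1$ and the form $[\overline{a_0; a_1, \ldots, a_{s-1}}]$ of $SP^+$), I proceed as follows. By Theorem \ref{orbi} the cutting sequence of $\zeta_\alpha$ with $\mathcal{F}$ agrees with the cutting sequence of its projection $\zeta$ with $\sfaktor{T_{\{1,n\}}}{\sim}$, and by Theorem \ref{cc} the essential periodicity of this sequence means $\zeta$ is homotopic to a closed curve on $\mfaktor{\Gamma_0(n)}{\mathbb{H}}$. Since $a_0 \geq 1$, the first fan of $(\zeta_\alpha, \mathcal{F})$ is a non-empty left fan with common vertex $\infty$, so $\zeta$ spirals around the cusp $[\infty]$ in its initial segment. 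Because $\zeta$ traces a closed loop, each period repeats this winding: the lift $\zeta_\alpha$ successively crosses the $\Gamma_0(n)$-translates $\gamma^j \cdot I$ of the starting edge, where $\gamma \in \Gamma_0(n)$ is the hyperbolic generator of the closed loop, and immediately after each crossing the same (non-empty) left fan appears with common vertex $\gamma^j(\infty) \in \Gamma_0(n) \cdot \infty$. These are distinct convergents of $\overline\alpha$ (distinctness follows because $\zeta_\alpha$ is an injective ray in $\mathbb{H}$), each with denominator divisible by $n$ by the first step.

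The remaining $SP^+$ case $\overline\alpha = [0; \overline{a_1, \ldots, a_s}]$ with $\alpha < 1$ needs one adjustment, since now the first fan is around $[0]$ rather than $[\infty]$. Here I use the stabiliser of $\alpha$ in $SL_2(\mathbb{Z})$: $\alpha$ is a purely periodic quadratic surd, hence fixed by a hyperbolic $\gamma_0 \in SL_2(\mathbb{Z})$, obtained (after doubling the period to even length, so that the determinant is $+1$) as a product of Gauss matrices. The classical identity $\prod \begin{psmallmatrix} a_i & 1 \\ 1 & 0 \end{psmallmatrix} = \begin{psmallmatrix} p_k & p_{k-1} \\ q_k & q_{k-1} \end{psmallmatrix}$ then gives $\gamma_0(\infty) = p_{s-1}/q_{s-1}$, a genuine convergent of $\overline\alpha$. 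Since $\Gamma_0(n)$ has finite index in $SL_2(\mathbb{Z})$, some power $\gamma := \gamma_0^m$ lies in $\Gamma_0(n)$, and the iterates $\gamma^k(\infty) = \gamma_0^{km}(\infty)$ form an infinite sequence of distinct convergents of $\overline\alpha$ with denominator divisible by $n$ by the first step. The main obstacle will be the combinatorial bookkeeping — checking the parity assumption that makes $\gamma_0$ land in $SL_2(\mathbb{Z})$, and verifying that the orbit iterates $\gamma^k(\infty)$ genuinely correspond to strictly increasing convergent indices rather than repeating; the latter is ensured by the hyperbolicity of $\gamma$, whose axis is exactly $\zeta_\alpha$ and along which $\gamma^k(\infty)$ converges monotonically to $\alpha$.
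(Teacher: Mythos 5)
Your proposal is correct overall, but it mixes two routes: for $\alpha>1$ it follows the paper quite closely, while for the $SP^+$, $\alpha<1$ case it substitutes a genuinely different, more algebraic argument.

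For $\alpha>1$ your reasoning is essentially the paper's: project $\zeta_\alpha$ to $\mfaktor{\Gamma_0(n)}{\mathbb{H}}$, invoke Theorem~\ref{cc} to get a closed curve, and use the resulting periodicity of edge-crossings. The lift of the closed curve then crosses a sequence of $\Gamma_0(n)$-translates $\rho_j(I)$ of the $y$-axis, and because the cutting sequence starts with a non-empty left fan (as $a_0\geq 1$), each such crossing is followed by a non-empty left fan whose common vertex $\rho_j(\infty)$ lies in $\Gamma_0(n)\cdot\infty$ and hence has denominator divisible by $n$. This is precisely what the paper does. One small caution: the phrase ``the hyperbolic generator $\gamma$ of the closed loop'' packages what the paper establishes via the pigeonhole argument in Theorem~\ref{cc} and the resulting edge-periodicity $\rho_{jk}=\rho_k^{\,j}$; you are implicitly assuming the loop's base point is the image of $I$ itself, which does hold here because the periodicity starts at $I$ for $\alpha>1$ in $ESP^+$. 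You should also note that the common vertex of the left fan after $\rho_j(I)$ is $\rho_j(\infty)$ rather than $\rho_j(0)$ precisely because $\rho_j\in PSL_2(\mathbb{R})$ preserves orientation and maps the initial left-fan configuration at $I$ to the one at $\rho_j(I)$.

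For the remaining case $\overline{\alpha}=[0;\overline{a_1,\ldots,a_s}]\in SP^+$, $\alpha<1$, the paper does \emph{not} need a new idea: since the period's cutting sequence $\overline{R^{a_1}\cdots L^{a_{2s}}}$ \emph{ends} with a non-empty left fan, the ray \emph{approaches} infinitely many translates $\rho(I)$ via a left fan, and the same conclusion follows by the same mechanism. Your alternative is to take the hyperbolic $\gamma_0\in SL_2(\mathbb{Z})$ fixing $\alpha$ (with $\gamma_0(\infty)$ a convergent), pass to a power $\gamma=\gamma_0^m\in\Gamma_0(n)$ by finite index, and observe that $\gamma^k(\infty)=\gamma_0^{km}(\infty)$ are distinct convergents with $n$ dividing their denominators. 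This is correct and arguably cleaner as linear algebra, entirely bypassing the orbifold machinery for this case; it does, however, require the small additional verification (which you flag but don't carry out) that $\gamma_0^j(\infty)=p_{js-1}/q_{js-1}$ for \emph{all} $j\geq 1$ — i.e.\ that the matrix for $j$ periods is $\gamma_0^j$ — and a remark that $\gamma_0$ does not fix $\infty$ so the orbit points are distinct. Note also that for $[0;\overline{a_1,\ldots,a_s}]$ the matrix fixing $\alpha$ is $\gamma_0=\begin{psmallmatrix}p_{s-1}&p_s\\ q_{s-1}&q_s\end{psmallmatrix}$, a conjugate (by the swap $\begin{psmallmatrix}0&1\\1&0\end{psmallmatrix}$) of the Gauss product rather than the Gauss product itself; the conclusion $\gamma_0(\infty)=p_{s-1}/q_{s-1}$ is nevertheless correct. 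Overall, the paper's proof is more uniform (one mechanism for cases (i)--(iii)), whereas yours trades uniformity for an independent, elementary subargument in case (ii).
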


\begin{proof} 


Let $\zeta_\alpha$ be a geodesic ray, starting at the $y$-axis $I$ and terminating at some point $\alpha\in\mathbb{R}_{>0}$, with $\overline{\alpha}\in\text{ESP}^+$. We can take $\overline{\zeta_\alpha}$ to be the image of $\zeta_\alpha$ in the orbifold $\mfaktor{\Gamma_0(n)}{\mathbb{H}}$, with $(\overline{\zeta_\alpha},\sfaktor{T_{\{1,n\}}}{\sim})=(\zeta_\alpha,\mathcal{F})$. By Theorem $\ref{cc}$, it follows that $\overline{\zeta_\alpha}$ is homotopic to a closed curve in $\mfaktor{\Gamma_0(n)}{\mathbb{H}}$. We can take $E$ to be the image of $I$ in $\mfaktor{\Gamma_0(n)}{\mathbb{H}}$ and as a result $\overline{\zeta_\alpha}$ will start at the edge $E$. Since $\overline{\zeta_\alpha}$ is homotopic to a closed curve, it follows that $\overline{\zeta_\alpha}$  intersects this edge $E$ infinitely often. 

Since $P_n$ is a fundamental domain for $\Gamma_0(n)$ and $I$ maps to $E$, the edge $E$ lifts to the set of all edges $\Gamma_0(n)({I}):=\{\rho({I}):\rho\in\Gamma_0(n)\}$ in $\mathbb{H}$.  From section \ref{struc}, we can explicitly describe $\Gamma_0(n)({I})$ as the set of edges with vertices $\frac{a}{nc}$ and $\frac{b}{d}$, with $\begin{psmallmatrix} a & b\\ nc & d \end{psmallmatrix}\in\Gamma_0(n)$. For $\rho=\begin{psmallmatrix} a & b\\ nc & d \end{psmallmatrix}\in\Gamma_0(n)$, $\rho(\infty)=\frac{a}{nc}$ and $\rho(0)=\frac{b}{d}$.

Because $\overline{\zeta_\alpha}$ intersects $E$ infinitely often, $\zeta_\alpha$ intersects $\Gamma_0(n)({I})$ infinitely often. If $\zeta_\alpha$ intersects $\rho(I)$, for some $\rho\in\Gamma_0(n)$, then due to the fact that every point $\rho(\infty)$ behaves locally like $\infty$ in $\mathcal{F}$ and $\frac{1}{n}\mathcal{F}$ (see section \ref{struc}), $\rho(\infty)$ will be a convergent of $\overline{\alpha}$ if and only if $\zeta_\alpha$ approaches the edge $\rho(I)$ via a left triangle or departs the edge $\rho(I)$ via a left triangle. 

We consider four different cases for the possible values of $(\zeta_\alpha,\mathcal{F})$ for $\overline{\alpha}\in{ESP^+}$:

 \renewcommand{\labelenumi}{(\roman{enumi})}
 \begin{enumerate}[wide=15pt, widest=99,leftmargin=45pt, labelsep=*] 
 
 \item $\overline{\alpha}\in{SP^+}$ $\alpha>1$: then $(\zeta_\alpha,\mathcal{F})=\overline{L^{a_0}R^{a_1}\cdots{R^{a_{2s-1}}}}$ and $a_i\in\mathbb{N}$.
 
 \item $\overline{\alpha}\in{SP^+}$ $\alpha<1$: then $(\zeta_\alpha,\mathcal{F})=\overline{R^{a_1}\cdots{L^{a_{2s}}}}$ and $a_i\in\mathbb{N}$.
 
 \item $\overline{\alpha}\in{ESP^+}$ $\alpha>1$: then $(\zeta_\alpha,\mathcal{F})=L^{a_0}\overline{R^{a_1}\cdots{L^{a_{2s}}}}=\overline{L^{a_0}R^{a_1}\cdots{L^{a_{2m}-a_0}}R^0}$ with $0<a_0\leq{a_{2s}}$ and $a_i\in\mathbb{N}$.
 
 \item $\overline{\alpha}\in{ESP^+}$ $\alpha<1$: then $(\zeta_\alpha,\mathcal{F})=R^{a_1}\overline{L^{a_2}\cdots{R^{a_{2s-1}}}}=\overline{R^{a_1}L^{a_2}\cdots{R^{a_{2s-1}-{a_1}}}L^0}$ with $0<a_1\leq{a_{2s-1}}$  and $a_i\in\mathbb{N}$.
 \end{enumerate}

From the construction in Theorem \ref{cc}, we can guarantee that we approach the edge $E$ via the last term in the period and depart $E$ via the first term in our period. Therefore, we can guarantee that $\zeta_\alpha$ approaches or departs infinitely many edges $\rho(I)$ via the last term in our period or the first term in our period respectively. For (i)-(iii) we can always write the period to start or end with non-empty left fan and as such, there exist infinitely many elements $\rho\in\Gamma_0(n)$ such that $\rho(\infty)$ will be a convergent of $\overline{\alpha}$. For case (iv), we can not guarantee such a result. Since $\alpha\neq{\infty}$, $\zeta_\alpha$ can only intersect finitely many edges $\rho(I)$, where $\rho(\infty)=\infty$. In particular, for cases (i)-(iii), $\overline{\alpha}$ has infinitely many convergents of the form $\frac{a}{nc}$ for $a,c,\in\mathbb{N}$.
\end{proof}

Unfortunately, this result does not hold in general for $\overline{\alpha}\in\text{ESP}^+$ with $0<\alpha{<1}$. When we write the corresponding cutting sequence, the period both starts and ends with a non-empty right fan and as such we can not guarantee a convergent of the form $\frac{a}{nc}$. An example of such a result is that for $\overline\alpha=[0;1,\overline{1,1,2}]$, $5$ does not divide any of the convergent denominators.
%
%
%
Since, for $0<\alpha<1$ with $\overline{\alpha}\in{ESP}^+$, $\frac{1}{\alpha}>1$ and $\overline{\frac{1}{\alpha}}\in{ESP^+}$, we get the following corollary. 
\begin{cor}\label{connum} Let $\overline\alpha\in{SP^+}$, then for every $n\in\mathbb{N}$ there are infinitely many convergent numerators $p_k$ of $\overline\alpha$ such that $n\mid{p_k}$.

Let $\overline\alpha\in{ESP^+}$ with $0<\alpha{<1}$, then for every $n\in\mathbb{N}$ there are infinitely many convergent numerators $p_k$ of $\overline\alpha$ such that $n\mid{p_k}$.
\end{cor}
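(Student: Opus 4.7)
The plan is to deduce Corollary \ref{connum} from Theorem \ref{conden} by applying the reciprocation map $\alpha \mapsto \frac{1}{\alpha}$, which interchanges convergent numerators and convergent denominators.

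First I would verify the following dictionary: if $\alpha>0$ has convergents $\{p_k/q_k\}$ and $1/\alpha$ has convergents $\{p'_k/q'_k\}$, then up to a one-term index shift we have $p_k = q'_{k+1}$ and $q_k = p'_{k+1}$ (when $\alpha>1$), with the analogous shift in the opposite direction when $0<\alpha<1$. This is immediate from the recursion defining convergents together with the observation that $[a_0;a_1,a_2,\ldots]$ and $[0;a_0,a_1,a_2,\ldots]$ are reciprocals. In particular, the set of convergent numerators of $\alpha$ agrees, up to finitely many terms, with the set of convergent denominators of $1/\alpha$, so any divisibility statement transfers between the two.

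Next I would check that reciprocation behaves well on $SP^+$ and $ESP^+$. For $\overline{\alpha}\in SP^+$ with $\alpha>1$, writing $\overline{\alpha}=[\overline{a_0;a_1,\ldots,a_{s-1}}]$ yields $\overline{1/\alpha}=[0;\overline{a_0,a_1,\ldots,a_{s-1}}]\in SP^+$ with $0<1/\alpha<1$. For $\overline{\alpha}\in SP^+$ with $0<\alpha<1$, writing $\overline{\alpha}=[0;\overline{a_1,\ldots,a_s}]$ yields $\overline{1/\alpha}=[\overline{a_1;a_2,\ldots,a_s}]\in SP^+$ with $1/\alpha>1$. For $\overline{\alpha}\in ESP^+$ with $0<\alpha<1$, the equivalent form $\overline{\alpha}=[0;\overline{a_1,\ldots,a_s}]$ gives $\overline{1/\alpha}=[\overline{a_1;a_2,\ldots,a_s}]\in SP^+$ with $1/\alpha>1$; in particular $\overline{1/\alpha}\in ESP^+$ with $1/\alpha>1$, as noted in the excerpt preceding the corollary.

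With these preliminaries in hand, the corollary follows at once: apply Theorem \ref{conden} to $\overline{1/\alpha}$ (either the $SP^+$ statement or the $ESP^+$ statement, as appropriate) to obtain infinitely many convergent denominators of $1/\alpha$ divisible by $n$, then use the numerator-denominator dictionary to conclude infinitely many convergent numerators of $\alpha$ are divisible by $n$.

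The main obstacle is purely bookkeeping: tracking the index shift for convergents under reciprocation and confirming the periodic form transforms correctly. No new geometric input is required, since reciprocation corresponds to the $PSL_2(\mathbb{Z})$-involution $\begin{psmallmatrix}0&1\\1&0\end{psmallmatrix}$, which merely swaps the Farey vertices $0$ and $\infty$ and hence preserves all of the structure used in the proof of Theorem \ref{conden}.
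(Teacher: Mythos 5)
Your proposal is correct and follows essentially the same route as the paper: the text immediately preceding Corollary \ref{connum} states precisely that for $\overline{\alpha}\in ESP^+$ with $0<\alpha<1$ one has $1/\alpha>1$ and $\overline{1/\alpha}\in ESP^+$, and the corollary is then read off from Theorem \ref{conden} via the standard numerator-denominator swap under reciprocation. You have merely made explicit the index-shift bookkeeping and the check that $SP^+$ and $ESP^+$ are stable under $\alpha\mapsto 1/\alpha$, which the paper leaves implicit.
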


\subsection{Eventually Periodic Continued Fractions in Relation to Essentially Periodic Continued Fractions}

\begin{theorem}\label{evp}
Let $\beta$ be in $EVP^+\setminus{ESP^+}$. Then for every $n\in\mathbb{N}$ there exists an $a,k\in\mathbb{N}$ and $\alpha\in{ESP}^+$ such that $\overline{mn^k\beta}=ma+\overline{m\alpha}$. In particular, $n^k\beta$ will be of the form $[a'_0;\overline{a_1,\ldots,a_s}]$ with $a'_0>a_s$. 
\end{theorem}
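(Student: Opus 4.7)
The theorem follows once we establish the ``in particular'' claim, since the identity $\overline{mn^k\beta}=ma+\overline{m\alpha}$ is then a direct reformulation. I would therefore focus on showing that for every $n$ there is a $k$ for which $\overline{n^k\beta}$ has the form $[a'_0;\overline{a_1,\ldots,a_s}]$ (a single partial quotient followed by a purely periodic tail). Once this is done, set $a:=a'_0$ and $\alpha:=[0;\overline{a_1,\ldots,a_s}]$, so that $\alpha\in SP^+\subset ESP^+$ and $n^k\beta=a+\alpha$. For any $m\in\mathbb{N}$, $mn^k\beta=ma+m\alpha$ with $ma\in\mathbb{N}$ and $m\alpha>0$, so writing $\overline{m\alpha}=[c_0;c_1,c_2,\ldots]$ gives $\lfloor mn^k\beta\rfloor=ma+c_0$ and the same fractional part, hence $\overline{mn^k\beta}=[ma+c_0;c_1,c_2,\ldots]$, which is exactly what the notation ``$ma+\overline{m\alpha}$'' encodes. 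The strict inequality $a'_0>a_s$ is then automatic: if instead $a'_0\le a_s$, the very expression $[a'_0;\overline{a_1,\ldots,a_s}]$ puts $n^k\beta$ into $ESP^+$, contradicting Corollary \ref{rcc}, which says $\beta\notin ESP^+$ forces $n^k\beta\notin ESP^+$ (as the $\overline{n}$ map is a bijection on $ESP^+$).

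To find the required $k$, I would work with the geodesic $\zeta_\beta$ from the $y$-axis $I$ to $\beta$ in $\mathbb{H}$. Write $\overline{\beta}=[b_0;b_1,\ldots,b_r,\overline{a_1,\ldots,a_s}]$ and let $\gamma:=[\overline{a_1,\ldots,a_s}]\in SP^+$. By Theorem \ref{cc} applied to $\gamma$, the projection of the geodesic representative of $\gamma$ onto $\mfaktor{\Gamma_0(n^k)}{\mathbb{H}}$ is homotopic to a closed curve $\lambda$; the image $\overline{\zeta_\beta}$ therefore decomposes into a finite initial transit $\tau$ (corresponding to the prefix $[b_0;b_1,\ldots,b_r]$) followed by a path homotopic to $\lambda$. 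The target is to kill $\tau$ by a suitable rescaling.

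Using the framework of Section \ref{TFC}, the continued fraction of $n^k\beta$ is the cutting sequence of the \emph{same} underlying geodesic $\zeta_\beta$ against the finer decoration $\sfaktor{T_{\{n^k,n^k\}}}{\sim}$ on the fundamental domain of $\Gamma_0(n^k)$. By Theorem \ref{conden} applied to $\gamma\in SP^+$, for every $n^k$ there are infinitely many convergent denominators of $\gamma$ divisible by $n^k$; by Lemma \ref{lem2} and the description in Section \ref{struc}, each such convergent corresponds to a vertex lying simultaneously in $\mathcal{F}$ and $\tfrac{1}{n^k}\mathcal{F}$, i.e.\ to an edge in $\Gamma_0(n^k)\cdot I$ that $\zeta_\gamma$ passes through. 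I would use one of these edges, suitably chosen, as the ``effective starting edge'' for the trajectory: after relocating the starting edge, the transit $\tau$ gets absorbed into the initial left-fan of $\overline{\zeta_\beta}$, so that the first fan of the cutting sequence against $\sfaktor{T_{\{n^k,n^k\}}}{\sim}$ has length exactly $a'_0=\lfloor n^k\beta\rfloor$, and the remainder is the cutting sequence of the closed curve $\lambda$, namely the purely periodic word $\overline{L^{a_1}R^{a_2}\cdots}$.

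The main obstacle is precisely this last step: rigorously pinning down a $k$ for which the initial transit $\tau$ is absorbed into the first fan of the rescaled cutting sequence. The divisibility result (Theorem \ref{conden}) supplies the combinatorial resource (arbitrarily many ``compatible'' edges in $\Gamma_0(n^k)\cdot I$ along the closed loop $\lambda$), but converting this resource into a specific $k$ that straightens out the prefix requires carefully tracking how $\zeta_\beta$ threads these edges on the finer decorated polygon, and verifying that the transit collapses onto the portion of the fan before $\zeta_\beta$ enters its first triangle of the closed-curve regime.
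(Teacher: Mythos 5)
Your reduction to the ``in particular'' claim is sound: if $\overline{n^k\beta}=[a'_0;\overline{a_1,\ldots,a_s}]$, then setting $a=a'_0$ and $\alpha=[0;\overline{a_1,\ldots,a_s}]\in SP^+\subset ESP^+$ gives $n^k\beta=a+\alpha$, and since $ma\in\mathbb{Z}$ while $m\alpha>0$ shares its fractional part with $mn^k\beta$, the continued fraction of $mn^k\beta$ is that of $m\alpha$ with $ma$ added to the zeroth partial quotient. Your deduction of $a'_0>a_s$ from Corollary~\ref{rcc} is also correct: $n^k\beta\in ESP^+$ would force $\beta=n^{-k}(n^k\beta)\in ESP^+$, a contradiction. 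So the theorem does indeed reduce to producing a $k$ for which $n^k\beta$ has the one-prefix form.

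The gap is exactly where you flag it, and your proposed route via Theorem~\ref{conden} does not close it. That theorem supplies convergents $\tfrac{p_j}{q_j}$ of the periodic tail with $n^k\mid q_j$, hence edges of $\Gamma_0(n^k)\cdot I$ that the trajectory meets; but a generic such edge is not a vertical line, and conjugating the trajectory to start at one of them (by the corresponding element of $\Gamma_0(n^k)$) merely transports the non-periodic prefix --- it does not absorb it into the first fan. What is needed is an edge of $\tfrac{1}{n^k}\mathcal{F}$ that the periodic part $\xi_\beta$ crosses \emph{and} that is a vertical line, so that after rescaling by $n^k$ everything to its left sits inside the initial left fan of $\zeta_{n^k\beta}$. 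The paper obtains this by a density argument rather than by Theorem~\ref{conden}: with $E'$ the Farey edge at which $\zeta_\beta$ passes from the prefix into the periodic tail and $x=\mathrm{Re}(\zeta_\beta\cap E')$, one has $x<\beta$, so one may pick $a,k$ with $\tfrac{a}{n^k}\in[x,\beta)$. The vertical line $E$ at $\tfrac{a}{n^k}$ is an edge of $\tfrac{1}{mn^k}\mathcal{F}$ for every $m$ and meets $\xi_\beta$; the translation $\phi=\begin{psmallmatrix}1&-a/n^k\\0&1\end{psmallmatrix}\in Isom^+(\tfrac{1}{mn^k}\mathcal{F})$ carries $E$ to $I$, so $\xi_\alpha:=\phi(\xi_{\beta,E})$ is a geodesic from $I$ to $\alpha=\beta-\tfrac{a}{n^k}>0$, still homotopic to a closed curve on $\mfaktor{\Gamma_0(mn^k)}{\mathbb{H}}$, and Theorem~\ref{cc} gives $\overline{mn^k\alpha}\in ESP^+$. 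After rescaling by $n^k$, the vertical line at $a$ splits $\zeta_{n^k\beta}$ into exactly $a$ left triangles followed by the periodic continuation, yielding $\overline{n^k\beta}=[a'_0;\overline{a_1,\ldots,a_s}]$. This density/continuity choice of $\tfrac{a}{n^k}\in[x,\beta)$ is the ingredient your argument is missing.
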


 \begin{figure}[htbp]
\centering
\begin{subfigure}{.8\textwidth}
  \centering
  \includegraphics[width=1\linewidth]{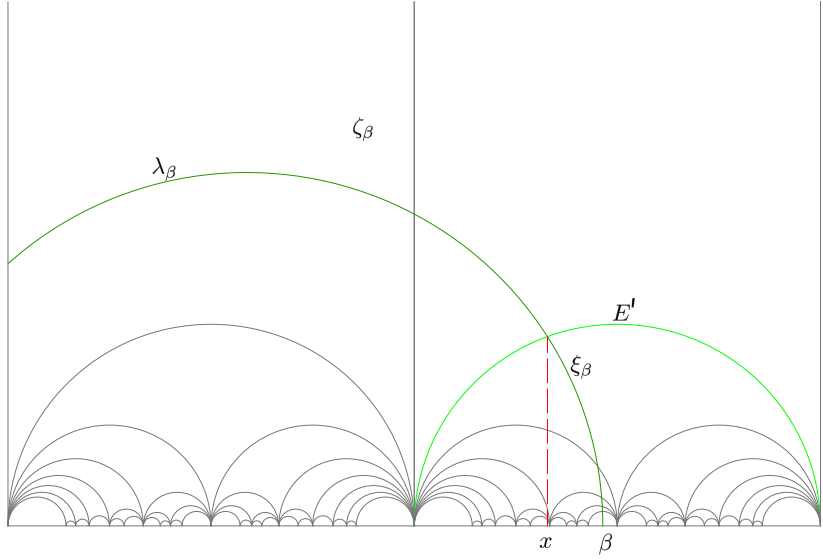}
  \caption{A diagram showing $\lambda_\beta$ and $\xi_{\beta}$ as sub-paths of $\zeta_\beta$.}
  \label{sub1}
\end{subfigure}%
\\
\begin{subfigure}{.8\textwidth}
  \centering
  \includegraphics[width=1\linewidth]{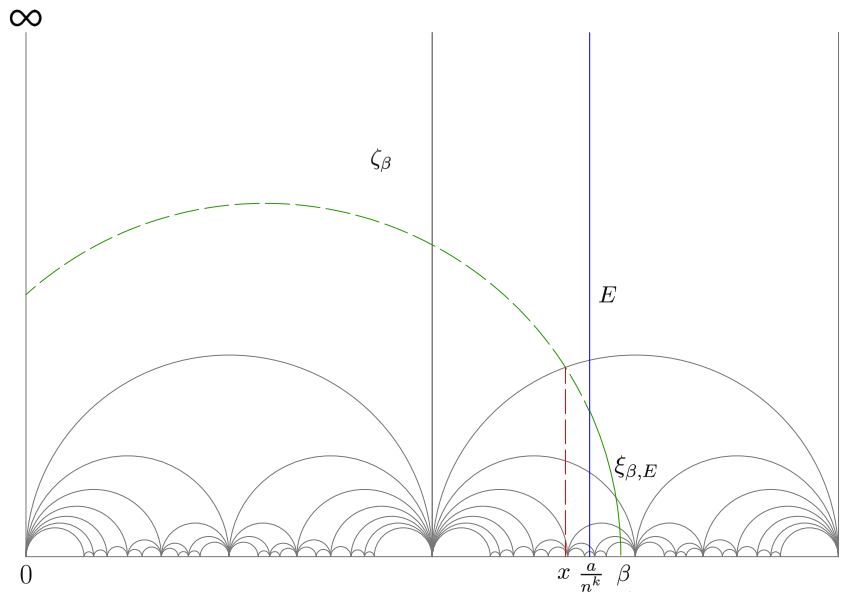}
  \caption{A diagram showing $\xi_{\beta,E}$ as a sub-path of $\zeta_\beta$.}
  \label{sub2}
\end{subfigure}
\caption{Diagrams to illustrate $\lambda_\beta$, $\xi_\beta$ and $\xi_{\beta,E}$, as sub-paths of $\zeta_\beta$.}
\label{tevp}
\end{figure}    

\noindent
\textit{Outline of proof}: For any $\beta\in{EVP}^+\setminus{ESP}^+$, we take the corresponding geodesic ray $\zeta_\beta$, starting at the $y$-axis $I$ and terminating at $\beta$ and we take an edge $E'$ in $\mathcal{F}$, which splits $\zeta_\beta$ into a periodic part $\xi_\beta$ and a non-periodic part $\lambda_\beta$. Note that we can view periodicity of $\xi_\beta$ as a geometric property in the sense that when we take the quotient orbifold $\mfaktor{\Gamma_0(m)}{\mathbb{H}}$, $\xi_\beta$ is homotopic to a closed curve. This is true for all $m\in\mathbb{N}$. In particular, any sub-ray of $\zeta_\beta$, which starts after $E'$ will be "geometrically periodic". We show that there exists $a,k\in\mathbb{N}$ such that the line $E$ from $\frac{a}{n^k}$ to $\infty$ intersects $\xi_\beta$. If we take $\xi_{\beta,E}$ to be $\xi_\beta$ with a prefix removed such that $\xi_{\beta,E}$ starts at edge $E$, then $\xi_{\beta,E}$ will also be homotopic to a closed curve in the quotient orbifold $\mfaktor{\Gamma_0(m)}{\mathbb{H}}$. It follows from Corollary \ref{rcc} that $\eta((n^k\xi_{\beta,E},\frac{1}{m}\mathcal{F}))\in{ESP^+}$ for all $m\in\mathbb{N}$.  When we scale $\xi_{\beta,E}$ by $n^k$, we see that $n^k\xi_{\beta,E}$ is still "geometrically periodic" and starts at the line from $a$ to $\infty$. Since $n^k\xi_{\beta,E}\subset{n^k{\zeta_\beta}}$ it follows that $\eta((n^k\zeta_\beta,\mathcal{F}))=a+\eta((n^k\xi_{\beta,E},\mathcal{F}))$ and $\eta((n^k\zeta_\beta,\frac{1}{m}\mathcal{F}))=ma+\eta((n^k\xi_{\beta,E},\frac{1}{m}\mathcal{F}))$ for any $m\in\mathbb{N}$.  See Fig.~\ref{tevp}.

\begin{proof}
We begin by looking at geodesic rays on $\mathbb{H}$. Let $\zeta_\beta$ to be a geodesic ray, starting at the $y$-axis $I$ and terminating at some point $\beta>0$, such that $\beta\in{EVP^+\setminus{ESP^+}}$. We can write $(\zeta_\beta,\mathcal{F})$ to be in the form $L^{b_0}\cdots{R^{a_{2r-1}}}\overline{L^{a_0}\cdots{R^{a_{2s-1}}}}$ with $b_0\in\mathbb{N}\cup\{0\}$ and $a_i,b_j\in\mathbb{N}$. We take $\lambda_\beta$ to be the finite sub-path of $\zeta_\beta$, which starts at $I$, terminates at some edge $E'$ and has cutting sequence $(\lambda_\beta,\mathcal{F})=L^{b_0}\cdots{R^{a_{2r-1}}}$. Similarly, we take $\xi_\beta$ to be the infinite sub-path of $\zeta_\beta$, which starts at $E$, terminates at $\beta$ and has cutting sequence $(\xi_\beta,\mathcal{F})=\overline{L^{a_0}\cdots{R^{a_{2s-1}}}}$. Let $(x,y)$ be the Cartesian co-ordinates of $\zeta_\beta\cap{E'}$. Necessarily $x<\beta$ since the geodesic ray approaches $\beta$ from the left and if we were to assume $x\geq{\beta}$ then the unique geodesic which passes through $(x,y)$ and $\beta$  could not also pass through the line from $0$ to $\infty$. Thus, the interval $[x,\beta)$ is non-empty. By continuity, we can find values $a,k\in\mathbb{N}$ such that $\frac{a}{n^k}\in[x,\beta)$ and the line $E$ from $\frac{a}{n^k}$ to $\infty$, must intersect $\xi_\beta$ (and by extension $\zeta_\beta$). Since $E$ is in $\frac{1}{mn^{k}}\mathcal{F}$ for all $m\in\mathbb{N}$, it follows by rescaling that $mn^{k}\zeta_\beta$ passes through the line from $a$ to $\infty$ for all $m\in\mathbb{N}$.


When we take $\overline{\xi_\beta}$ to be the projection of $\xi_\beta$ in $\mfaktor{\Gamma_0(mn^k)}{\mathbb{H}}$, we see that $\overline{\xi_\beta}$ is homotopic to a closed curve in $\mfaktor{\Gamma_0(mn^k)}{\mathbb{H}}$, since $\eta((\overline{\xi_\beta},\sfaktor{T_{\{1,mn^k\}}}{\sim}))=\eta((\xi_\beta,\mathcal{F}))\in\text{ESP}^+$. Since $\xi_\beta$ intersects the edge $E$ from $\frac{a}{n^k}$ to $\infty$ and $E$ is an edge of $\frac{1}{n^{k}}\mathcal{F}$, $\overline{\xi_\beta}$ intersects $\overline{E}$, the image of $E$ in $\mfaktor{\Gamma_0(mn^k)}{\mathbb{H}}$, infinitely often. If we remove the prefix of $\xi_\beta$ such that it starts from the edge $E$, which we denote ${\xi_{\beta,E}}$, then the projection $\overline{\xi_{\beta,E}}$ in $\mfaktor{\Gamma_0(mn^k)}{\mathbb{H}}$ will also be homotopic to a closed curve (since this is equivalent to just moving the base point of the curve). Then, by Theorem \ref{cc}, $(\overline{\xi_{\beta,E}},\sfaktor{T_{\{n^k,mn^k\}}}{\sim})$ and $(\overline{\xi_{\beta,E}},\sfaktor{T_{\{mn^k,mn^k\}}}{\sim})$ are both essentially periodic. For $m\in\mathbb{N}$, there is a map $\phi:=\begin{psmallmatrix} 1 & -\frac{a}{n^k}\\ 0 & 1\end{psmallmatrix}\in{Isom^+(\frac{1}{mn^{k}}\mathcal{F})}\subset{Isom^+(\frac{1}{n^{k}}\mathcal{F})}$, which maps $E$ to $I$. 
For $\xi_\alpha:=\phi(\xi_{\beta,E})$, since $\phi\in{Isom^+(\frac{1}{mn^{k}}\mathcal{F})}$ for  $m\in\mathbb{N}$, we have $(\xi_{\beta,E},\frac{1}{mn^k}\mathcal{F})_E=\phi((\xi_{\beta,E},\frac{1}{mn^k}\mathcal{F})_E)=(\phi(\xi_{\beta,E}),\phi(\frac{1}{mn^k}))_{\phi({E})}=(\xi_\alpha,\frac{1}{mn^k}\mathcal{F})_I$.

If we take $\alpha$ to be the endpoint of $\xi_\alpha$, we can see that $\alpha+\frac{a}{n^k}=\beta$ and therefore, $mn^k\alpha+ma=mn^k\beta$.  Finally, since $mn^k\beta$ intersects the line from $ma$ to $\infty$, the terms before this line can only affect the first term i.e. $\eta((\zeta_\beta,\frac{1}{mn^k}\mathcal{F}))=ma+\eta((\xi_\beta,\frac{1}{mn^k}\mathcal{F}))_E=ma+\eta((\xi_\alpha,\frac{1}{mn^{k}}\mathcal{F}))$. Here we can take $\alpha'=n^k\alpha$ which will be in $ESP^+$ by Corollary \ref{rcc}. The result follows by relabelling.
\end{proof}

\subsection{Bounds of Eventually Periodic Continued Fractions Grow at least Exponentially}

In this section we give an alternative proof to the statement that every element of $EVP^+$ satisfies pLC \cite{PDs}. We also show that for $\overline{\alpha}\in{ESP^+}$   $\lim_{m\rightarrow\infty} B(m\alpha)=\infty$. Finally, we show that for every $\overline{\alpha}\in{EVP}^+$ there exists  $a,k\in\mathbb{N}$ such that $n^ia\leq{B(n^{i+k}\alpha)}$ for every $i\in\mathbb{N}$. In other words, $B(n^{l}\alpha)$ grows at least exponentially (after some point) for any $\overline{\alpha}\in{EVP}^+$.

\begin{pro}\label{mai}  If $\alpha\in\mathbb{R}_{>0}$ with $\overline{\alpha}\in{ESP^+}$, then $\lim_{m\rightarrow\infty} B(m\alpha)=\infty$.
\end{pro}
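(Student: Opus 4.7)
The strategy is to establish $B(m\alpha)\geq m$ for every $m\in\mathbb{N}$, which immediately gives $\lim_{m\to\infty}B(m\alpha)=\infty$. The main tool is Proposition \ref{pro2}, combined with the divisibility statements of Theorem \ref{conden} and Corollary \ref{connum}. The plan is to split the argument into the case in which Theorem \ref{conden} applies directly to $\overline{\alpha}$, and the residual case where it does not.

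First I would handle the case where $\overline{\alpha}\in ESP^+$ has $\alpha>1$, or where $\overline{\alpha}\in SP^+$. Theorem \ref{conden} then supplies, for every $m\in\mathbb{N}$, infinitely many convergent denominators $q_k$ of $\overline{\alpha}$ divisible by $m$. I would select one with $q_k>m$ (always possible since there are infinitely many) and invoke Proposition \ref{pro2} to conclude $B(m\alpha)\geq m$. Letting $m\to\infty$ finishes this case.

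The remaining case is $\overline{\alpha}\in ESP^+\setminus SP^+$ with $0<\alpha<1$, in which Theorem \ref{conden} is explicitly known to fail (the example $\overline{\alpha}=[0;1,\overline{1,1,2}]$ has no convergent denominator divisible by $5$). To handle this, I would write $\overline{\alpha}=[0;a_1,\overline{a_2,\ldots,a_{s+1}}]$ and set $\gamma:=[0;\overline{a_2,\ldots,a_{s+1}}]\in SP^+$, so that $\alpha=1/(a_1+\gamma)$ and hence $m/\alpha=ma_1+m\gamma$. Applying the previous case to $\gamma$ gives $B(m\gamma)\geq m$, and since $\{m/\alpha\}=\{m\gamma\}$ we obtain $B(m/\alpha)=B(m\gamma)\geq m$. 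To then transfer this bound back to $B(m\alpha)$, I would invoke Corollary \ref{connum}, which supplies for every $m$ infinitely many convergent numerators of $\overline{\alpha}$ divisible by $m$; under the reciprocal symmetry of $\mathcal{F}$ swapping numerators and denominators, these correspond to convergent denominators of $\overline{1/\alpha}$ divisible by $m$, to which Proposition \ref{pro2} applies, and the invariance of cutting sequences under $Isom(\mathbb{H})$ transfers the resulting fan enlargement in $\frac{1}{m}\mathcal{F}$ back to $\zeta_\alpha$ to yield $B(m\alpha)\geq m$.

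The hard part will be this last transfer step: since Theorem \ref{conden} fails in case (iv), the argument must go through numerator divisibility and the reciprocal involution rather than through a direct application of Theorem \ref{conden} to $\overline{\alpha}$. The cleanest execution is to verify, by direct inspection of the common structure of $\mathcal{F}$ and $\frac{1}{m}\mathcal{F}$ described in Section \ref{struc} and Lemma \ref{lem2}, that the fan enlargement guaranteed by Proposition \ref{pro2} for $\overline{1/\alpha}$ corresponds, under the reciprocal involution, to a fan of the same size encountered by $\zeta_\alpha$ in $\frac{1}{m}\mathcal{F}$, so that $B(m\alpha)\geq m$ follows.
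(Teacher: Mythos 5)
Your case (i) argument is sound: for $\overline{\alpha}\in SP^+$ or $\overline{\alpha}\in ESP^+$ with $\alpha>1$, Theorem~\ref{conden} hands you a convergent denominator $q_k$ with $m\mid q_k$ and $q_k>m$, and Proposition~\ref{pro2} then gives $B(m\alpha)\geq m$. But the transfer step in case (ii) does not work, and the obstruction is exactly the one you flagged as ``the hard part.'' The reciprocal involution $\iota\colon z\mapsto 1/\bar z$ does carry $\mathcal F$ to $\mathcal F$ (so it swaps numerators and denominators of convergents, as you say), but it does \emph{not} preserve the scaled complex: a vertex $\tfrac{p}{mq}$ of $\tfrac1m\mathcal F$ maps to $\tfrac{mq}{p}=m\cdot\tfrac{q}{p}$, so $\iota(\tfrac1m\mathcal F)=m\mathcal F$. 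Consequently, the fan that Proposition~\ref{pro2} produces for $\zeta_{1/\alpha}$ in $\tfrac1m\mathcal F$ --- which indeed gives $B(m/\alpha)\geq m$ --- is carried by $\iota$ to a fan of $\zeta_\alpha$ in $m\mathcal F$, not in $\tfrac1m\mathcal F$. That is information about $\overline{\alpha/m}$ (equivalently about $1/(m\alpha)$), not about $\overline{m\alpha}$, and since $m/\alpha$ and $1/(m\alpha)$ differ by a factor of $m^2$ you cannot identify $B(m/\alpha)$ with $B(m\alpha)$. Indeed for $\alpha=[0;1,\overline{1,1,2}]=\sqrt{10}/5$ one has $5\alpha=\sqrt{10}=[3;\overline{6}]$ with $B(5\alpha)=6$, while $5/\alpha=5\sqrt{10}/2=[7;\overline{1,9,1,1,1,1,9,1,14}]$ with $B(5/\alpha)=14$; the two quantities are not linked by an isometry preserving $\tfrac15\mathcal F$.

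The paper's proof avoids the case split and the divisibility machinery altogether. By Corollary~\ref{rcc}, $\overline{m\alpha}\in ESP^+$ for every $m$, and once $m$ is large enough that $m\alpha>1$ the continued fraction has the form $[a_0^{(m)};\overline{a_1^{(m)},\ldots,a_{r(m)}^{(m)}}]$ with $a_0^{(m)}\leq a_{r(m)}^{(m)}$. Hence $B(m\alpha)\geq a_{r(m)}^{(m)}\geq a_0^{(m)}=\lfloor m\alpha\rfloor\to\infty$. This gives the slightly weaker bound $B(m\alpha)\geq\lfloor m\alpha\rfloor$ rather than your $B(m\alpha)\geq m$, but it is all the proposition requires, and it applies uniformly in all cases (including the $0<\alpha<1$, $ESP^+\setminus SP^+$ case where Theorem~\ref{conden} is known to fail). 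If you want to keep your divisibility route, you must abandon the uniform claim $B(m\alpha)\geq m$ in case (ii) and fall back on this $\lfloor m\alpha\rfloor$ bound; the reciprocal transfer cannot be repaired because the wrong complex is preserved.
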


\begin{proof} By corollary \ref{rcc}, for every $m\in\mathbb{N}$ we have $\overline{m\alpha}\in{ESP^+}$.  For $k\in\mathbb{N}$ big enough, $k\alpha>1$ and for all $m\geq{k}$, $\overline{m\alpha}$ is of the form $[a^{(m)}_0;\overline{a^{(m)}_1,\ldots,a^{(m)}_{r(m)}}]$ with $0<a^{(m)}_0\leq{a^{(m)}_{r(m)}}$. Here $r(m)$ is the length of the period for $m\alpha$. For each $m\in\mathbb{N}$, $B(m\alpha)\geq{a^{(m)}_{r(m)}}$ by the definition of the function $B(x)$, ${a^{(m)}_{r(m)}}\geq{a^{(m)}_0}$ by the definition of essentially periodic continued fractions and ${a^{(m)}_0=\lfloor{m\alpha}\rfloor}$ by the construction of continued fractions. In particular, $B(m\alpha)\geq{a^{(m)}_{r(m)}}\geq{a^{(m)}_0=\lfloor{m\alpha}\rfloor}$.  Since $\lfloor{m\alpha}\rfloor\rightarrow\infty$ for every $\alpha\in\mathbb{R}_{>0}$, it follows that $B({m\alpha})\rightarrow\infty$.
\end{proof}

\begin{pro}\label{evpplc} If $\alpha\in\mathbb{R}_{>0}$ with $\overline{\alpha}\in{EVP^+}$, then $\lim_{i\rightarrow\infty} B(n^i\alpha)=\infty$. In particular, there exists  $a,k\in\mathbb{N}$ such that $n^ia\leq{B(n^{i+k}\alpha)}$ for every $i\in\mathbb{N}$. Every element in $EVP^+$ satisfies pLC.
\end{pro}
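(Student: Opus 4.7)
The plan is to reduce the eventually-periodic case to the essentially-periodic one via Theorem \ref{evp}, then to extract an exponential lower bound on $B(n^{i+k}\alpha)$ using Theorem \ref{conden} together with the Corollary after Proposition \ref{pro2}.

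First I apply Theorem \ref{evp} (trivially taking $k_0 = a_0 = 0$, $\alpha' = \alpha$ if $\overline{\alpha}$ already lies in $\textit{ESP}^+$) to obtain natural numbers $a_0, k_0$ and $\overline{\alpha'} \in \textit{ESP}^+$ with $\overline{n^{i+k_0}\alpha} = n^i a_0 + \overline{n^i \alpha'}$ for every $i \in \mathbb{N}$. Since adding the integer $n^i a_0$ to $n^i \alpha'$ only shifts the zeroth partial quotient, which is excluded from $B$ by convention, this yields $B(n^{i+k_0}\alpha) = B(n^i \alpha')$, reducing the problem to an exponential lower bound on $B(n^i \alpha')$ with $\overline{\alpha'} \in \textit{ESP}^+$.

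Next I secure the hypothesis of Theorem \ref{conden} by passing to a sufficiently large iterate: pick $I_0 \in \mathbb{N}$ with $\tilde{\alpha} := n^{I_0}\alpha' > 1$. By Corollary \ref{rcc}, $\overline{\tilde{\alpha}} \in \textit{ESP}^+$, so Theorem \ref{conden} applies and yields, for every $j \in \mathbb{N}$, infinitely many convergent denominators $q_l$ of $\overline{\tilde{\alpha}}$ divisible by $n^j$; in particular some such $q_l$ exceeds $n^j$. Invoking the Corollary after Proposition \ref{pro2} with this $q_l$ and the natural number $n^j$ gives $B(n^j \tilde{\alpha}) \geq n^j \cdot a_l \geq n^j$, where $a_l \geq 1$ is the $l$-th partial quotient of $\overline{\tilde{\alpha}}$. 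Combining $n^j \tilde{\alpha} = n^{j+I_0}\alpha'$ with the identity $B(n^{i+k_0}\alpha) = B(n^i \alpha')$ from the previous step, and setting $k := k_0 + I_0$ and $a := 1$, yields $B(n^{j+k}\alpha) \geq n^j a$ for all $j \geq 0$, which is precisely the ``in particular'' clause and forces $\lim_{i\to\infty} B(n^i\alpha) = \infty$. For pLC, take $n = p$ prime: the exponential lower bound gives $\limsup_i B(p^i\alpha) = \infty$, so by the reformulation of pLC stated in Section \ref{plc}, every $\alpha \in \textit{Bad}$ with $\overline{\alpha} \in \textit{EVP}^+$ satisfies pLC, while any $\alpha \notin \textit{Bad}$ satisfies pLC trivially (as noted immediately after the definition of $\textit{Bad}$).

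The step I anticipate will be the main obstacle is the passage to $\tilde{\alpha}$. Theorem \ref{conden}, in the non-strictly-periodic case, requires $\alpha' > 1$, whereas Theorem \ref{evp} gives no a priori control on the size of the essentially periodic $\overline{\alpha'}$ it outputs --- indeed, the second form $[0; a_1, \overline{a_2, \ldots, a_{s+1}}]$ in Definition \ref{esp}.(2.) permits $0 < \alpha' < 1$. Multiplying by a sufficiently high power $n^{I_0}$ and using Corollary \ref{rcc} to stay inside $\textit{ESP}^+$ circumvents this cleanly, at the modest cost of absorbing $I_0$ into the final constant $k$.
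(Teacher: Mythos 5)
Your proof is correct, and it reaches the same conclusion as the paper's proof but by a genuinely different route. After the common reduction via Theorem~\ref{evp}, the paper invokes Proposition~\ref{mai}: since $\overline{m\alpha'}\in ESP^+$ for all $m$ (Corollary~\ref{rcc}), once $m\alpha'>1$ the expansion has the form $[a_0^{(m)};\overline{a_1^{(m)},\ldots,a_{r(m)}^{(m)}}]$ with $a_0^{(m)}\le a_{r(m)}^{(m)}$, hence $B(m\alpha')\ge a_0^{(m)}=\lfloor m\alpha'\rfloor$; specializing $m=n^{i+k}$ yields $B(n^{i+k}\alpha)\ge n^i\lfloor n^k\alpha'\rfloor$. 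This is a one-line floor-function inequality coming directly from the shape of essentially periodic expansions. You instead route through the convergent-denominator machinery: Theorem~\ref{conden} supplies convergent denominators of $\tilde\alpha$ divisible by $n^j$, and the Corollary after Proposition~\ref{pro2} converts such a denominator into a fan of size at least $n^j a_l$ in $\overline{n^j\tilde\alpha}$. Both are sound. The paper's proof is shorter and yields a possibly larger constant $a=\lfloor n^k\alpha'\rfloor>1$ rather than your $a=1$, but this is immaterial to the statement; your version exercises more of the geometric apparatus (closed curves and convergent divisibility) built up in Section~3, and your maneuver of passing to $\tilde\alpha=n^{I_0}\alpha'>1$ to satisfy the hypothesis of Theorem~\ref{conden} mirrors the paper's choice of $k$ with $\lfloor n^k\alpha'\rfloor>1$, so the mechanisms for handling the $0<\alpha'<1$ case are parallel even though the lemmas invoked differ.
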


\begin{proof}
By Theorem \ref{evp}, for $\beta\in{EVP^+}$ we can find some $k'\in\mathbb{N}\cup\{0\}$, such that $n^{k'}\beta$ behaves like some element $n^{k'}\alpha$ of $ESP^+$. In particular, by Proposition \ref{mai}, for every $\beta\in{EVP^+}$, $\lim_{i\rightarrow\infty} B(n^{k'+i}\beta)=\infty$ and by taking $n=p$ prime, pLC follows. We can take $k\in\mathbb{N}$, with $k\geq{k'}$, such that $\lfloor{n^k\alpha\rfloor}>1$. We know that $B(n^{k'+i}\alpha)=B(n^{k'+i}\beta)$, since $n^{k'}\beta$ and $n^{k'}\alpha$ only differ by their first term (by Theorem \ref{evp}). Therefore, $B(n^{i+k}\beta)=B(n^{i+k}\alpha)\geq\lfloor{n^{i+k}\alpha}\rfloor\geq{\lfloor{n^i\lfloor{n^k\alpha}}\rfloor}\rfloor={n^i\lfloor{n^k\alpha}\rfloor}=n^ia^{(k)}_0$.
\end{proof}

\end{document}